\documentclass[preprint,12pt]{elsarticle}

\usepackage{enumerate}
\usepackage{amssymb}
\usepackage{lipsum}
\usepackage[a4paper, total={7.1in, 10.2in}]{geometry}
\usepackage{booktabs}
\usepackage{amsmath,amssymb,url}
\usepackage{enumitem} 
\usepackage{graphics} 
\usepackage{array}
\usepackage{dsfont}
\usepackage[all]{xy}
\usepackage{amsthm}
\usepackage{tikz}
\usepackage{mathrsfs}

\numberwithin{equation}{section}
\newtheorem{theorem}{Theorem}[section]
\newtheorem{lemma}[theorem]{Lemma}

\newcommand{\dn}{\mathord{\downarrow}\hspace{0.05em}}
\newcommand{\up}{\mathord{\uparrow}\hspace{0.05em}}

\newcommand\blfootnote[1]{%
\begingroup
\renewcommand\thefootnote{}\footnote{#1}%
\addtocounter{footnote}{-1}%
\endgroup
}

\journal{}

\begin{document}

\begin{frontmatter}



\title{The product of two Scott sober complete lattices is not always Scott sober}


\author{Zhengmao He}
\address{School of Sciences, Southwest Petroleum University, Chengdu 610500, Sichuan, China}
\begin{abstract} In this paper, we obtain two complete lattices that are sober with respect to the Scott topology, whereas the Scott space of their product is not a sober space. Thus, we answer a question posed by Miao, Xi, Jia, Li and Zhao in \cite{EEF31}.
\end{abstract}
\begin{keyword} Scott topology; sober space; complete lattice\\
\vspace*{0.2cm}
{\em Mathematics Subject Classification:} 54B10; 06B35; 06F30; 06A06
\end{keyword}


\end{frontmatter}
\blfootnote{This work is supported by the National Natural Science Foundation of China (Grant nos.12471438,12601906) and the Sichuan Science and Technology Program (Grant No. 2026NSFSC0784).}
\blfootnote{E-mail address: hezhengmaomath@163.com (Z.M.He).}


\section{Introduction}
Sobriety is a fundamental property in general topology and domain theory. The representation of topological spaces by their lattices of open sets plays a fundamental role in Stone duality. In its classical form, Stone duality establishes a dual equivalence between Boolean algebras and compact, zero dimensional Hausdorff spaces, whereas its distributive lattice counterpart concerns spectral spaces. At the level of frames, the correspondence between topological spaces and lattices of open sets restricts to a dual equivalence between sober spaces and spatial frames. In commutative algebra, sobriety arises naturally in the prime spectra of commutative rings. Hochster's characterization of prime spectra states that the spaces homeomorphic to the spectra of commutative rings are precisely the spectral spaces, namely, quasi-compact sober spaces that admit a basis of quasi-compact open sets closed under finite intersections(\cite{R4}). A further source of sober spaces is domain theory, developed by Scott to provide mathematical structures for denotational semantics(\cite{R90}). The Scott topology of a poset is one of the most important order topologies in domain theory. A center problem is when the Scott topology of a dcpo is sober(\cite{GG03}). A classical result is that every continuous dcpo and more generally quasicontinuous dcpo is sober with respect to the Scott topology (\cite{GG03}). In addition, the Hofmann-Mislove theorem provides important link. In a sober space, nonempty compact saturated subsets correspond to proper Scott open filters of the open-set lattice (\cite{GG03}).

As we know, directed completeness alone does not imply Scott sobriety. Johnstone first constructed a countable dcpo whose Scott space is not sober (\cite{U14}) and Isbell subsequently constructed a complete lattice with a non-sober Scott space (\cite{U13}). More recent counterexamples show that substantial additional algebraic restrictions may still be insufficient. Xu, Xi and Zhao produced a complete Heyting algebra whose Scott space is not sober(\cite{EEE20}), while Miao, Xi, Li and Zhao constructed a countable distributive complete lattice with a non sober Scott space(\cite{AB75}). These results separate Scott sobriety from completeness, finite distributivity, and countability.

Related investigations concern the information retained by lattices of Scott closed sets. The symbols $\Gamma P$ denotes the complete lattice of Scott closed subsets of a dcpo $P$. Ho, Goubault-Larrecq, Jung, and Xi showed that there are non isomorphic dcpos can have isomorphic Scott closed set lattices, thereby answering the Ho-Zhao problem negatively (\cite{R5}). Such results make it important to distinguish the order structure of a dcpo, the topology of its Scott space, and the Scott topology on its closed set lattice.

The preservation of sobriety under products is especially natural in this setting. Ordinary topological products of sober spaces are sober(\cite{GG03}). However, for two dcpos $P$ and $Q$,
$\Sigma(P\times Q)$ and $\Sigma P\times\Sigma Q$ need not coincide. One always has
$$
\sigma(P)\times\sigma(Q)\subseteq\sigma(P\times Q),
$$
where the left-hand side denotes the product topology. Equality holds for continuous dcpos, so their product sobriety follows from the classical theory. For arbitrary dcpos with sober Scott topologies, this inclusion may be strict, so the corresponding conclusion does not follow directly.

The general product question was explicitly recorded by Xu and Zhao as Question 4.6 in their survey of open problems on well-filtered spaces and sober spaces(\cite{T2}). Specifically, this question is whether the product of two Scott sober dcpos must again be Scott sober. Miao, Xi, Jia, Li, and Zhao answered this question negatively and specifically, they constructed Scott-sober dcpos whose order product is not Scott sober(\cite{EEF31}). Their examples are not complete lattices and their paper explicitly leaves the complete lattice case unresolved(\cite{EEF31}). They also obtained a positive result for products of Scott open set lattices of countable posets(\cite{EEF31}). This distinction leads to the question addressed here:
$$
\begin{gathered}
\text{If A and B are complete lattices such that}\
\Sigma A\text{ and }\Sigma B\text{ are sober,}\
\text{must }\Sigma(A\times B)\text{ be sober?}
\end{gathered}
$$

Recent preprints have further clarified the additional hypotheses under which positive results for products remain valid. He proves that the Scott topology on an order product agrees with the product of the factor Scott topologies whenever the latter space is Fr\'{e}chet-Urysohn(\cite{R6}). Xu proves that, for arbitrary products of countably presented frames, the relevant correspondence remains valid and the associated spaces are sober(\cite{R1}). Xu and Ji extend the corresponding conclusions to arbitrary products of countable meet-continuous complete lattices(\cite{EAS1}). These results delineate broad classes of settings in which algebraic or countability assumptions guarantee that the Scott topology on a product is compatible with the product of the Scott topologies.

Further recent preprints provide natural examples of Scott non sobriety. Xi, Shen, and Zhao prove that the complete Boolean algebra of regular open subsets of the real line is not Scott sober(\cite{EEF32}). In a subsequent paper, they characterize Scott sobriety of regular open algebras of second-countable $T_3$ spaces by density of the isolated points(\cite{R2}). He, Dong, and Wang prove that countable dcpos with core-compact Scott spaces are Scott sober, while the open set lattice of the usual rational number space has a non sober Scott topology (\cite{R66}). Together, these developments show that Scott sobriety depends on specific interactions between approximation, topology, and lattice operations. They leave open the possibility that two individually Scott sober complete lattices may lose sobriety upon forming their order product.

In this paper, we give a negative answer to the complete lattice product question.  Concretely,  there exist complete lattices $A$ and $B$ such that
$$
\Sigma A\text{ and }\Sigma B\text{ are sober but}\
\Sigma(A\times B)\text{ is not sober}.
$$
Moreover, the factors $A$ and $B$ can be chosen to be co-frames.

Our construction uses the dcpos $P_{1}$ and $P_{2}$ underlying the counterexample of Miao, Xi, Jia, Li and Zhao (\cite{EEF31}) and passes to their Scott closed set lattices
$$
A=\Gamma P_1,\ B=\Gamma P_2.$$
The proof requires establishing sobriety of both $\Sigma\Gamma P_1$ and $\Sigma\Gamma P_2$. This is an additional property of the chosen constructions. We then transfer an irreducible closed relation from the dcpo product to the product of the closed set lattices through the principal ideal map
$$
P_1\times P_2\longrightarrow\Gamma P_1\times\Gamma P_2,
\qquad
(x,y)\longmapsto
\bigl(\downarrow_{P_1}x,\downarrow_{P_2}y\bigr).
$$
The Scott closure of its image yields an irreducible closed subset without a greatest element.
This conclusion shows that Scott sobriety is not preserved by the binary products even within the class of co-frames.
\section{Preliminaries}

\quad Let $P$ be a partially ordered set (poset, for short).  Given a $A\subseteq P$, we write $\up A$ for the set $\{x\in P\mid \exists\ a\in A, a\leq x \}$ and $\dn A$ for the set $\{x\in P\mid \exists\ a\in A, x\leq a\}$. A nonempty subset $D\subseteq P$ is {\em directed} if $\forall\ a,b\in D$, there is a $c\in D$, $a,b\leq c$. Dually, a nonempty subset $D\subseteq P$ is {\em filtered} if $D$ is directed in the dually poset $P^{op}$. $P$ is called a {\em directed complete poset} ({\em dcpo}, for short) if every directed subset of $P$ has a supremum. A element $a$ is {\em maximal} in $A\subseteq P$ if $a\in A$ and for each $x\in A$, $a\leq x$ implies $a=x$. Choose $x,y\in P$. We say that $x$ is {\em way below} $y$, in symbols $x\ll y$, if for each directed subset $D\subseteq P$ for which $\bigvee D$ exists, $y\leq\bigvee D$ implies $\up x\cap D\neq\emptyset$. In particular, $x$ is called a {\em compact element} if $x\ll x$. A {\em complete lattice} is a poset in which every subset has a supremum and an infimum.

\quad Let $P$ be a poset. A subset $U\subseteq P$ is called {\em Scott open} if
$U=\up U$, or equivalently $U$ is upper and for each directed subset $D$ with $\bigvee D$ existing, $\bigvee D\in U$ implies
$D\cap U\neq \emptyset$. All Scott open sets of $P$ form the Scott topology $\sigma(P)$. We write $\Sigma P$ for $(P,\sigma(P))$ and $\Gamma P$ for all closed sets (also called Scott closed sets) in $\sigma(P)$. Let $P,Q$ be dcpos. Then the mapping $f:\Sigma P\longrightarrow \Sigma Q$ is continuous (or called {\em Scott continuous}) if and only if for each directed set $D\subseteq P$, $f(\bigvee D)=\bigvee\limits_{d\in D}f(d)$.

\quad Let $X$ be a $T_{0}$ space and $A\subseteq X$. We write $\overline{A}_{X}$ for the closure of $A$ in $X$. The {\em specialization order} $\leq$ on $X$ is defined by
$$x\leq y \Longleftrightarrow x\in cl(\{y\}).$$
Unless otherwise stated, throughout
the paper, whenever an order-theoretic concept is mentioned in the context of a $T_{0}$ space $X$, it is to be interpreted with respect to the specialization order on $X$. Under the specialization order, we can conclude that for each $x\in X$, $\overline{\{x\}}=\dn x$. A nonempty subset $F\subseteq X$ is called {\em irreducible}, if for each pair of closed sets $A, B\subseteq X$, $F\subseteq A\cup B$ implies $F\subseteq A$ or $F\subseteq B$. A $T_{0}$ space $X$ is {\em sober} if every irreducible closed set $A$ is equal to $\overline{\{x\}}$ for some $x\in X$. Equivalently, a $T_{0}$ space $X$ is sober if and only if each irreducible closed subset of $X$ contains a largest element with respect to the specialization order on $X$.

\section{The dcpos $P_{1}$ and $P_{2}$ }

In this section, we recall two dcpos $P_1$ and $P_2$ constructed by Miao, Xi, Jia, Li and Zhao(\cite{EEF31}).

Let $\mathbb{N}$ be the natural numbers set and $\mathbb{N}^{<\omega}$ the set of all nonempty finite words on $\mathbb{N}$. For $x=x_1\cdots x_n,\ y=y_1\cdots y_m$ in $\mathbb{N}^{<\omega}$, define the order $\preceq$ by
$$ x\preceq y
 \Longleftrightarrow
  n\leq m\text{ and }x_k=y_k\text{ for each }1\leq k\leq n.
$$
 If $s\in\mathbb{N}^{<\omega}$ and $j\in\mathbb{N}$, we write $s\cdot j$ for the word obtained by adding $j$ to $s$.

Let
\[
  M=\mathbb{N}\sqcup\mathbb{N}^{<\omega},
\]
ordered by
\[x\leq_M y
\Longleftrightarrow
x\preceq y, x,y\in\mathbb{N}^{<\omega}\ \text{or}\
x\leq y, x,y\in\mathbb{N}.
\]
Take
\[
  \Delta=\{(a,b)\in\mathbb{N}^2:a<b\}
\]
and
\[
  L=(\Delta\times M)\sqcup\{\top\}.
\]
For each $(a,b)\in\Delta$ and $x\in M$, write
\[
  x_{a,b}=((a,b),x).
\]
Define the order $\leq_L$ on $L$ by
\[
  x_{a,b}\leq_L y_{a',b'}
\Longleftrightarrow
  (a,b)=(a',b')\text{ and }x\leq_My,
\]
adding for each $u\in L$, $u\leq_L\top$.

In \cite{EEF31}, they construct an injection $i:\Delta\longrightarrow\mathcal P(\mathbb{N})$
such that
\[
  i(a,b)\cap i(a',b')=\varnothing
  \quad\text{if}(a,b)\neq(a',b')
\]
and further
\[
  k>b,\ \text{for every }(a,b)\in\Delta\text{ and }k\in i(a,b).
\]
For each $(a,b)\in\Delta$, there exists an order preserving injection
\[
  f_{a,b}:\mathbb{N}^{<\omega}\longrightarrow i(a,b).
\]

Let $B=\mathbb{N}\times\mathbb{N}\times L$ and write $T_{m,n}=(m,n,\top)$, for each pair $m,n\in\mathbb{N}$.

Define the following four relations $\sqsubset_1,\sqsubset_2,\sqsubset_3,\sqsubset_4$ on $B$ by
\begin{align}
(m,n,z)&\sqsubset_1(m,n,y)
  &&\text{if }z<_Ly,\\
(a,n,x_{a,b})&\sqsubset_2T_{f_{a,b}(x),n+1}
  &&\text{if }(a,b)\in\Delta,\ x\in\mathbb{N}^{<\omega},\\
(b,n,k_{a,b})&\sqsubset_3T_{f_{a,b}(k),n+1}
  &&\text{if }(a,b)\in\Delta,\ k\in\mathbb{N},\\
(f_{a,b}(s),n,k_{a,b})&\sqsubset_4T_{f_{a,b}(s\cdot k),n}
  &&\text{if }(a,b)\in\Delta,\ s\in\mathbb{N}^{<\omega},\ k\in\mathbb{N}.
\end{align}
For relations $R$ and $S$ on $B$, define
\[
  x\,(R\mathbin{;}S)\,z
  \quad\Longleftrightarrow\quad
  \text{there exists }y\in B\text{ such that }x\,R\,y\text{ and }y\,S\,z.
\]
Define
\[
\begin{aligned}
\sqsubset={}&\sqsubset_1\cup\sqsubset_2\cup\sqsubset_3\cup\sqsubset_4\\
&{}\cup(\sqsubset_1\mathbin{;}\sqsubset_2)
  \cup(\sqsubset_1\mathbin{;}\sqsubset_3)
  \cup(\sqsubset_1\mathbin{;}\sqsubset_4).
\end{aligned}
\]
The order on $B$ is $\sqsubseteq=\sqsubset\cup\operatorname{id}_B$.

Then the maximal elements of $B$ are ${\rm Max} B=\{T_{m,n}\mid m,n\in\mathbb{N}\}$.

The posets $P_1$ and $P_2$ are obtained by attaching uncountably many chains to the common subposet $B$.

Let $\mathbb{N}^{\mathbb{N}}$ be the set of all mappings from $\mathbb{N}$ to $\mathbb{N}$.

For $h,k\in\mathbb{N}^{\mathbb{N}}$ and $m,n\in\mathbb{N}$, define
\[
  (h,n)\leq(k,m)
\Longleftrightarrow
  h=k\text{ and }n\leq m.
\]

Let $P_1=(\mathbb{N}^{\mathbb{N}}\times\mathbb{N})\sqcup B\sqcup\{\top_1\}$. The order on $P_1$ is the smallest partial order containing the orders on $\mathbb{N}^{\mathbb{N}}\times\mathbb{N}$ and $B$ which satisfies
\[
  (h,n)<T_{h(n),n}\ \text{and}\ u<\top_1
\]
for all $(h,n)\in\mathbb{N}^{\mathbb{N}}\times\mathbb{N}$ and $u\in P_1\setminus\{\top_1\}$.

Fix a bijection $\varphi:\mathbb{N}\longrightarrow\Delta$
and take $E_n=i(\varphi(n))$, for each $n\in\mathbb{N}$.

Set
\[
  \chi=
  \left\{
    g\mathrel{\Big|}
    g:\mathbb{N}\longrightarrow\bigcup_{n\in\mathbb{N}}E_n,\
    g(n)\in E_n\text{ for every }n\in\mathbb{N}
  \right\}.
\]
For $g,g'\in\chi$ and $m,n,k,p\in\mathbb{N}$, define
\[
  (g,n,k)\leq(g',m,p)
\Longleftrightarrow
  g=g',\ k=p,\text{ and }n\leq m.
\]
Let $P_2=(\chi\times\mathbb{N}\times\mathbb{N})\sqcup B\sqcup\{\top_2\}$.

The order on $P_2$ is the least partial order containing the orders on $\chi\times\mathbb{N}\times\mathbb{N}$ and $B$ which satisfies
\[
  (g,n,k)<T_{g(n),k},
  \qquad
  u<\top_2
\]
for all $(g,n,k)\in\chi\times\mathbb{N}\times\mathbb{N}$ and $u\in P_2\setminus\{\top_2\}$.
\section{Main Results}

In this section, we show that $\Sigma\Gamma P_{1}$ and $\Sigma\Gamma P_{2}$ are sober but $\Sigma (\Gamma P_{1}\times\Gamma P_{2})$ is not sober.

\begin{lemma}{\rm Let $P$ be a dcpo and $V\in\sigma (P)$. Then $\Gamma V=\{C\cap V\mid C\in\Gamma P\}$.}

\end{lemma}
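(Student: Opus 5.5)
The statement identifies the Scott closed subsets of the open subposet $V$ (with the order inherited from $P$) with the traces on $V$ of Scott closed subsets of $P$. I will prove the two inclusions separately. Throughout I use one basic fact: if $D\subseteq V$ is directed, then its supremum in $V$ exists and equals $\bigvee_P D$. Indeed, $\bigvee_P D\in\up D\subseteq V$ because $V$ is an upper set, and an element of $V$ is an upper bound of $D$ in $V$ exactly when it is one in $P$. Hence $V$ is a dcpo, and directed suprema in $V$ agree with those in $P$.

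For the inclusion $\supseteq$, take $C\in\Gamma P$ and show that $C\cap V$ is Scott closed in $V$. It is a lower set in $V$: if $y\leq x\in C\cap V$ with $y\in V$, then $y\in C$ because $C$ is a lower set. It is closed under directed suprema: if $D\subseteq C\cap V$ is directed, then $\bigvee_V D=\bigvee_P D$ lies in $C$ (since $C$ is Scott closed) and in $V$ (by the fact above). This direction is routine.

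For the inclusion $\subseteq$, take a Scott closed set $F$ of $V$. The natural candidate is $C=\dn F$, or, if necessary, the Scott closure $\mathrm{cl}_{\sigma(P)}(F)$. I would argue with complements instead: it suffices to show that $U=(V\setminus F)\cup(P\setminus V)$ satisfies $F=V\setminus U$ and that $U$ is open in a suitable sense. The key step is to show that $V\setminus F$ is Scott open in $P$. It is an upper set: it is upper in $V$, and $V$ is upper in $P$. It is inaccessible by directed suprema: if $D\subseteq P$ is directed and $\bigvee_P D\in V\setminus F$, then $V$ being Scott open yields $d_0\in D\cap V$. The set $D'=D\cap\up d_0$ is directed, cofinal in $D$, contained in $V$, and has the same supremum. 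Since $V\setminus F$ is Scott open in $V$, $D'$ meets $V\setminus F$, and therefore so does $D$.

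With this in hand, set $C=P\setminus(V\setminus F)$. This set lies in $\Gamma P$ and satisfies $C\cap V=F$, which finishes the argument.

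The main obstacle I expect is this last step: transferring directed sets that are not contained in $V$ to the subspace via the cofinal subset $D\cap\up d_0$. Everything else is bookkeeping with the agreement of suprema between $V$ and $P$.
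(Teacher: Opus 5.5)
Your proof is correct and follows the same route as the paper. The trace $C\cap V$ of a Scott closed set is Scott closed in $V$; conversely, $V\setminus F$ is Scott open in $P$, so $C=P\setminus(V\setminus F)=(P\setminus V)\cup F$ is Scott closed in $P$ with $C\cap V=F$.

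You also justify two steps the paper only asserts: that directed suprema in $V$ agree with those in $P$, and that Scott openness transfers from $V$ to $P$ via the cofinal directed set $D\cap\up d_0$. Your intermediate remark that $(V\setminus F)\cup(P\setminus V)$ is ``open in a suitable sense'' is a false start. That set is not Scott open in general, and you never use it, so simply delete it.
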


\begin{proof} Let $C\in\Gamma P$. Then $C\cap V$ is lower in the subposet $V$ and closed under directed suprema of $V$. This means that $C\cap V\in\Gamma V$. On the contrary, suppose $W\in\Gamma V$. Equivalently, $W$ is a Scott closed subset of $V$. Then $V\setminus W$ is Scott open in $V$. It follows form $V\in\sigma(P)$ that $V\setminus W$ is Scott open in $P$. Set $$Q=P\setminus(V\setminus W)=(P\setminus V)\cup W.$$ Whence $Q\in\Gamma(P)$ and $W=Q\cap V$. Therefore, the Scott closed sets of the subposet $V$ are exactly all sets $C\cap V$. \end{proof}

\begin{lemma} {\rm Let $U$ be a dcpo and $V\in\sigma(U)$. If $$N(U)=\{\bigvee D\mid D\subseteq U\ \mbox{is directed and}\ \bigvee D\not\in D\}\subseteq V,$$ then $\Sigma \Gamma U$ is sober if and only if $\Sigma\Gamma V$ sober.}
\end{lemma}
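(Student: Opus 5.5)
The direction from $\Sigma\Gamma U$ to $\Sigma\Gamma V$ will come from a retraction argument, and the converse from decomposing each $C\in\Gamma U$ into a part inside $V$ and a part outside $V$.

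\emph{The retraction.} Define $r:\Gamma U\to\Gamma V$ by $r(C)=C\cap V$; it lands in $\Gamma V$ by Lemma 4.1. Define $e:\Gamma V\to\Gamma U$ by $e(W)=\overline{W}_{\Sigma U}$. Both maps are monotone.
\begin{itemize}
\item The map $e$ preserves directed suprema, since suprema in $\Gamma U$ and $\Gamma V$ are closures of unions and the closure of a union of closures is the closure of the union.
\item For $r$, let $\{C_i\}$ be directed in $\Gamma U$. The points added when passing from $\bigcup C_i$ to its Scott closure are obtained by iterating directed suprema that are not attained. Such suprema lie in $N(U)\subseteq V$. Using Lemma 4.1 and the fact that $V$ is Scott open, hence closed under taking sups of directed sets that eventually enter it, I expect to get $\overline{\bigcup C_i}\cap V=\overline{\bigcup (C_i\cap V)}_{\Sigma V}$.
\item We have $r\circ e=\mathrm{id}$. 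Indeed, $W=C\cap V$ for some $C\in\Gamma U$, so $\overline{W}\subseteq C$ and therefore $\overline{W}\cap V=W$.
\end{itemize}
Hence $\Sigma\Gamma V$ is a retract of $\Sigma\Gamma U$. A retract of a sober space is sober, which proves that sobriety of $\Sigma\Gamma U$ implies sobriety of $\Sigma\Gamma V$.

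\emph{A lattice of lower sets outside $V$.} Let $L$ be the lattice of all lower subsets of $U\setminus V$. Every such lower set $S$ is Scott closed in $U$. To see this, take a directed $D\subseteq S$. If $\bigvee D\notin D$, then $\bigvee D\in N(U)\subseteq V$. But $U\setminus V$ is Scott closed, so $\bigvee D\in U\setminus V$, a contradiction. So $\bigvee D\in D\subseteq S$. Now $L$ is an algebraic, completely distributive lattice, so $\Sigma L$ is sober. Define $s:\Gamma U\to L$ by $s(C)=C\setminus V$. It preserves directed suprema, because the closure of a union only adds points of $N(U)\subseteq V$. Each $C\in\Gamma U$ is recovered as $C=r(C)\cup s(C)$, so $C\mapsto (r(C),s(C))$ is an order embedding of $\Gamma U$ into $\Gamma V\times L$.

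\emph{Sobriety of $\Sigma\Gamma U$ from that of $\Sigma\Gamma V$.} Let $\mathcal A$ be an irreducible Scott closed subset of $\Gamma U$.
\begin{itemize}
\item By continuity, $r(\mathcal A)$ and $s(\mathcal A)$ are irreducible. Their closures therefore have greatest elements $W_0\in\Gamma V$ and $S_0\in L$, where $S_0=\bigcup_{C\in\mathcal A}(C\setminus V)$.
\item Set $C_0=e(W_0)\cup S_0$. This is a Scott closed set, being a union of two closed sets, and every $C\in\mathcal A$ satisfies $C\le C_0$.
\item It remains to show $C_0\in\mathcal A$. This is the \textbf{main obstacle}.
\end{itemize}

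To attack it, I would first approximate: show that every $D\in\Gamma U$ with $r(D)$ in the closure of $r(\mathcal A)$ and $s(D)\subseteq S_0$ belongs to $\mathcal A$. The tool is irreducibility of $\mathcal A$ tested against the basic closed sets $\{C: C\cap O=\emptyset\}$, for $O$ Scott open in $U$ (these are Scott closed in $\Gamma U$). This should let me realize $C_0$ as a directed supremum, in $\Gamma U$, of sets of the form $\overline{C\cup C'}$ with $C,C'\in\mathcal A$.

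The difficulty is that membership in $\mathcal A$ must be verified for finite joins. Irreducibility alone gives only that $\mathcal A$ is ``directed up to closure'', and I would need to show that $\mathcal A$ is genuinely closed under the binary joins used. If this fails, the fallback is to use the Scott-closedness of $\mathcal A$ together with the explicit description of Scott open sets of $\Gamma U$ that contain $C_0$. The aim there is to show that each such open set meets $\mathcal A$, which forces $C_0\in\overline{\mathcal A}=\mathcal A$.
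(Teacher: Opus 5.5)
\textbf{Verdict.} Your forward direction is fine, but the converse has a genuine gap: the step you flag as the main obstacle is exactly the missing idea, and the auxiliary map $s$ you build around it is not continuous.

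\textbf{Forward direction.} This is the paper's own argument (Claims 1--3): $r\circ e=\mathrm{id}$, $e\circ r\le\mathrm{id}$, and a retract of a sober space is sober. Continuity of $r$ needs no iteration of suprema and does not use $N(U)\subseteq V$. Since $V$ is Scott open, $\sigma(V)$ is the subspace topology, so $\overline{A}_{\Sigma U}\cap V=\overline{A\cap V}_{\Sigma V}$ for every $A\subseteq U$.

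\textbf{What the converse leaves open.} Write $S=\bigcup\mathcal A$, which is Scott closed. Then $W_0=S\cap V$ and $S_0=S\setminus V$, so your $C_0$ is just $S$, which is also the paper's candidate. But you establish neither $S\in\mathcal A$ nor even $e(W_0)\in\mathcal A$.

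\textbf{Why the detour through $L$ fails.} The map $s(C)=C\setminus V$ is not Scott continuous. The closure of a directed union adds the limit points in $N(U)\subseteq V$ together with everything below them, and those points may lie outside $V$. For example, take $U=\mathbb N\cup\{\omega,y\}$ with $0<1<2<\cdots<\omega$, $y<\omega$, and $y$ incomparable with every $n$. Let $V={\uparrow}1$ and $C_i={\downarrow}i$. Then $\bigvee_{\Gamma U}C_i=U$ and $s(U)=\{0,y\}$, while $\bigcup_i s(C_i)=\{0\}$. Consequences:
\begin{itemize}
\item Irreducibility of $s(\mathcal A)$ is unjustified.
\item A point of $S_0$ such as $y$ (when $S=U$) is not compact in $S$. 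So the irreducibility argument for adjoining ${\downarrow}y$ is unavailable, and $y$ has to come from the closure of $S\cap V$.
\item Testing irreducibility against the sets $\{C: C\cap O=\emptyset\}$ only yields density statements; at best it puts $C_0$ in the lower Vietoris closure of $\mathcal A$. The Scott closedness of $\mathcal A$ has to enter through Scott-continuous self-maps of $\Gamma U$.
\end{itemize}

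\textbf{The missing idea} (the paper's Subclaims 4.3--4.6) is to split along $F_0=e(W_0)=\overline{S\cap V}_{\Sigma U}$ rather than along $V$.
\begin{itemize}
\item $F_0\in\mathcal A$. Since $e\circ r\le\mathrm{id}$ and $\mathcal A$ is a lower set, $r(\mathcal A)\subseteq e^{-1}(\mathcal A)$. This preimage is Scott closed, so $W_0\in e^{-1}(\mathcal A)$.
\item Every $t\in S\setminus F_0$ is compact in $S$. A non-attained directed supremum in $S$ lies in $N(U)\cap S\subseteq S\cap V\subseteq F_0$, and $F_0$ is a lower set.
\item Hence $\mathscr O_t=\Gamma U\setminus{\downarrow}_{\Gamma U}(S\setminus{\uparrow}_S t)$ is Scott open, and for $R\subseteq S$ one has $R\in\mathscr O_t$ iff $t\in R$. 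Since $\mathscr O_t$ meets $\mathcal A$, irreducibility makes $\mathcal A\cap\mathscr O_t$ dense in $\mathcal A$.
\item The map $K\mapsto K\cup{\downarrow}t$ is Scott continuous and fixes each member of $\mathcal A\cap\mathscr O_t$. So the Scott closed set $\{K\in\Gamma U: K\cup{\downarrow}t\in\mathcal A\}$ contains $\mathcal A\cap\mathscr O_t$, and hence all of $\mathcal A$.
\item This is exactly the closure under finite joins you were missing. It gives $F_0\cup{\downarrow}T\in\mathcal A$ for every finite $T\subseteq S\setminus F_0$. These sets form a directed family with supremum $S$ in $\Gamma U$, so $S\in\mathcal A$.
\end{itemize}
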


\begin{proof} Define two mappings $q:\Gamma U\longrightarrow\Gamma V$ and $e:\Gamma V\longrightarrow\Gamma U$ by
$$q(F)=F\cap V,\ \ e(G)=\overline{G}_{\Sigma U}.$$

$\mathbf{Claim} \ 1$: $q$ and $e$ are Scott continuous.

Suppose $\mathscr{C}\subseteq\Gamma U$ is directed. Then $$q(\bigvee\limits_{\Gamma U}\mathscr{C})=\overline{(\bigcup\mathscr{C})}_{\Sigma U}\cap V$$ and $$\bigvee\limits_{C\in\mathscr{C}}\limits^{\Gamma V}q(C)=\bigvee\limits_{C\in\mathscr{C}}\limits^{\Gamma V}(C\cap V)=\overline{((\bigcup\mathscr{C})\cap V)}_{\Sigma V}.$$
Clearly, $\overline{(\bigcup\mathscr{C})}_{\Sigma U}\cap V$ is an upper bound of $\{C\cap V\mid C\in\mathscr{C}\}$ in $\Gamma V$. Fix a $x\in\overline{(\bigcup\mathscr{C})}_{\Sigma U}\cap V$ and a $W\in\sigma (V)$ being a Scott open neighbourhood at $x$. As $V\in\sigma(U)$, $W\in\sigma(U)$. It follows form $x\in\overline{(\bigcup\mathscr{C})}_{\Sigma U}$ that $W\cap(
\bigcup\mathscr{C})\neq\emptyset$. As a consequence, $x\in\overline{((\bigcup\mathscr{C})\cap V)}_{\Sigma V}$. So $$\overline{(\bigcup\mathscr{C})}_{\Sigma U}\cap V=\overline{((\bigcup\mathscr{C})\cap V)}_{\Sigma V}$$ and thus $q$ is Scott continuous.

Let $\mathscr{D}\subseteq \Gamma V$ be a directed subfamily. Then $$e(\bigvee\limits_{\Gamma V}\mathscr{D})=e(\overline{(\bigcup\mathscr{D})}_{\Sigma V})=\overline{(\overline{(\bigcup\mathscr{D})}_{\Sigma V})}_{\Sigma U}$$ and
$$\bigvee\limits_{D\in\mathscr{D}}\limits^{\Gamma U} e(D)=\bigvee\limits_{D\in\mathscr{D}}\limits^{\Gamma U} \overline{D}_{\Sigma U}=\overline{(\bigcup\limits_{D\in\mathscr{D}}\overline{D}_{\Sigma U})}_{\Sigma U}.$$
Clearly, $\bigvee\limits_{D\in\mathscr{D}}\limits^{\Gamma U} e(D)\subseteq e(\bigvee\limits_{\Gamma V}\mathscr{D})$. Choose a $y\in\overline{(\overline{(\bigcup\mathscr{D})}_{\Sigma V})}_{\Sigma U}$ and a $S\in\sigma(U)$ with $y\in S$. Then we have $S\cap\overline{(\bigcup\mathscr{D})}_{\Sigma V}\neq\emptyset$. Select a $z\in S\cap\overline{(\bigcup\mathscr{D})}_{\Sigma V}$. Now, $S\cap V$ is a Scott open neighbourhood at $z$ in $\Sigma V$. By $z\in\overline{(\bigcup\mathscr{D})}_{\Sigma V}$, $S\cap V\cap(\bigcup\mathscr{D})\neq\emptyset$. This leads to $S\cap(\bigcup\limits_{D\in\mathscr{D}}\overline{D}_{\Sigma U})\neq\emptyset$. Consequently, $y\in\overline{(\bigcup\limits_{D\in\mathscr{D}}\overline{D}_{\Sigma U})}_{\Sigma U}$ and hence $\bigvee\limits_{D\in\mathscr{D}}\limits^{\Gamma U} e(D)=e(\bigvee\limits_{\Gamma V}\mathscr{D})$. Therefore, $e$ is also Scott continuous.

$\mathbf{Claim} \ 2$: $q\circ e=id_{\Gamma V}$ and $e\circ q\leq id_{\Gamma U}$.

Let $G^{\ast}\in\Gamma V$. We have $$G^{\ast}\subseteq q\circ e(G^{\ast})=\overline{G^{\ast}}_{\Sigma U}\cap V.$$ Assume there is a $g\in\overline{G^{\ast}}_{\Sigma U}\cap V$ but $g\not\in G^{\ast}$. Then $V\setminus G^{\ast}$ is a Scott open set containing $g$ in $\Sigma V$. By $V\in\sigma(U)$, $V\setminus G^{\ast}\in\sigma(U)$. We obtain that $(V\setminus G^{\ast})\cap G^{\ast}\neq\emptyset$ because $g\in\overline{G^{\ast}}_{\Sigma U}$, a contradiction. So we can conclude that $G^{\ast}=q\circ e(G^{\ast})$. Hence $q\circ e=id_{\Gamma V}$.

Let $F^{\ast}\in\Gamma U$. Then $$e\circ q(F^{\ast})=\overline{(F^{\ast}\cap V)}_{\Sigma U}\subseteq\overline{F^{\ast}}_{\Sigma U}=F^{\ast}.$$ Thus $e\circ q\leq id_{\Gamma U}$.

$\mathbf{Claim} \ 3$: If $\Sigma \Gamma U$ is sober, then $\Sigma \Gamma V$ is sober.

Let $\mathscr{A}\subseteq\Gamma V$ be an irreducible closed subset in $\Sigma \Gamma V$. Since $e$ is Scott continuous, $\overline{e(\mathscr{A})}_{\Sigma \Gamma U}$ is also irreducible in $\Sigma \Gamma U$. By the Sobriety of $\Sigma \Gamma U$, $$\overline{e(\mathscr{A})}_{\Sigma \Gamma U}=\overline{\{F^{\star}\}}_{\Sigma \Gamma U}$$ for some Scott closed subset $F^{\star}\in\Gamma U$. As $q$ is Scott continuous and $q\circ e=id_{\Gamma V}$, $q^{-1}(\mathscr{A})$ is Scott closed in $\Sigma \Gamma U$ and $e(\mathscr{A})\subseteq q^{-1}(\mathscr{A})$. This means that $\overline{e(\mathscr{A})}_{\Sigma \Gamma U}\subseteq q^{-1}(\mathscr{A})$. So we know $F^{\star}\in q^{-1}(\mathscr{A})$. Equivalently, $q(F^{\star})\in\mathscr{A}$. In addition, $e(A)\subseteq F^{\star}$ for each $A\in\mathscr{A}$. Then we get $$q(e(A))=A\subseteqq(F^{\star}).$$ In other words, $q(F^{\star})$ is the greatest element in $\mathscr{A}$. This implies that $\mathscr{A}=\overline{\{q(F^{\star})\}}_{\Sigma\Gamma V}$. Hence, $\Sigma\Gamma V$ is sober.

$\mathbf{Claim} \ 4$: If $\Sigma \Gamma V$ is sober, then $\Sigma \Gamma U$ is sober.

Let $\mathscr{B}\subseteq\Gamma U$ be an irreducible Scott closed set. Set $$S=\bigcup\mathscr{B}.$$

$\mathbf{Subclaim} \ 4.1$: $S$ is Scott closed in $\Sigma U$.

Indeed, the mapping $\xi:U\longrightarrow \Gamma U$ is Scott continuous, where $\xi$ is defined by $\xi(u)=\dn u$. This leads to $\xi^{-1}(\mathscr{B})$ is Scott closed in $\Sigma U$. Furthermore, it is not difficult to verify that $S=\xi^{-1}(\mathscr{B})$. Thus, $S\in\Gamma U$.

Take $$H=S\cap V.$$ As $q$ is Scott continuous, $\mathscr{E}=\overline{q(\mathscr{B})}_{\Sigma\Gamma V}$ is an irreducible closed subset. Then $\mathscr{E}$ contains a largest elements $G^{\star}$ for $\Sigma \Gamma V$ is sober.

$\mathbf{Subclaim} \ 4.2$: $H=G^{\star}$.

Choose a $x\in H$. Then $x\in I$ for some $I\in\mathscr{B}$. Now, we have that $$x\in I\cap V=q(I)\in\mathscr{E}\subseteq \dn_{\Gamma V}G^{\star}.$$ Whence $x\in G^{\star}$. Proving that $H\subseteq G^{\star}$.

By Lemma 4.1, $H$ is Scott closed in subposet $V$. Clearly, $q(\mathscr{B})\subseteq\dn_{\Gamma V}H$ and $\dn_{\Gamma V}H$ is Scott closed in $\Sigma\Gamma V$. So we have $\mathscr{E}=\overline{q(\mathscr{B})}_{\Sigma\Gamma V}\subseteq\dn_{\Gamma V}H$. In particular, $G^{\star}\in\dn_{\Gamma V}H$, that is $G^{\star}\subseteq H$.

Therefore, $H=G^{\star}$.

Take $$F_{0}=e(H)=\overline{(S\cap V)}_{\Sigma U}.$$

$\mathbf{Subclaim} \ 4.3$: $F_{0}\in\mathscr{B}$.

For each $J\in\mathscr{B}$, Since $e\circ q(J)\subseteq J$ and $\mathscr{B}$ is lower in $\Gamma U$, $e\circ q(J)\in \mathscr{B}$. It follows that $q(\mathscr{B})\subseteq e^{-1}(\mathscr{B})$. Furthermore, we obtain $\mathscr{E}=\overline{q(\mathscr{B})}_{\Sigma\Gamma V}\subseteq e^{-1}(\mathscr{B})$ as $e^{-1}(\mathscr{B})\in\Gamma(\Gamma V)$. Particularly, $G^{\star}\in e^{-1}(\mathscr{B})$, or equivalently $e(G^{\star})=e(H)=F_{0}\in\mathscr{B}$.

$\mathbf{Subclaim} \ 4.4$: Each $x\in S\setminus F_{0}$ is compact in subposet $S$.

By Subclaim 4.1, we have $$N(S)\subseteq N(U)\cap S\subseteq V\cap S\subseteq F_{0}.$$

Let $s\in S\setminus F_{0}$ and $D^{\ast}\subseteq S$ a directed subset with $s\leq \bigvee\limits_{S} D^{\ast}$. Assume that $\bigvee\limits_{S} D^{\ast}\not\in D^{\ast}$. Then we obtain $\bigvee\limits_{S} D^{\ast}\in F_{0}$. As $F_{0}\in \Gamma U$, $x\in F_{0}$, impossible.
As a consequence, $\bigvee\limits_{S} D^{\ast}\in D^{\ast}$ and then $s\ll_{S} s$.

$\mathbf{Subclaim} \ 4.5$: For each $K\in\mathscr{B}$ and each $t\in S\setminus F_{0}$, $K\cup\dn_{U}t\in\mathscr{B}$.

As $t\ll_{S}t$, $\up_{S}t$ is Scott open in subposet $S$. So $K_{t}=S\setminus\up_{S}t$ is Scott closed in subposet $S$. By $S\in\Gamma U$, $K_{t}$ is Scott closed in $\Sigma U$. Take $$\mathscr{O}_{t}=\Gamma U\setminus\dn_{\Gamma U}K_{t}.$$ Then $\mathscr{O}_{t}$ is Scott open in $\Sigma\Gamma U$. For each $R\in\Gamma U$, we have
$$R\in\mathscr{O}_{t}\Longleftrightarrow R\not\subseteq K_{t}\Longleftrightarrow R\cap\up_{S}t\neq\emptyset\Longleftrightarrow t\in R.$$
Set $$\mathscr{R}_{t}=\mathscr{O}_{t}\cap\mathscr{B}.$$
By $t\in S=\bigcup\mathscr{B}$, choose a $J_{t}\in\mathscr{B}$ such that $t\in J_{t}$. Then $J_{t}\in\mathscr{O}_{t}\cap\mathscr{B}$, or equivalently $\mathscr{R}_{t}\neq\emptyset$. Clearly, $\overline{\mathscr{R}_{t}}_{\Sigma\Gamma U}\subseteq\mathscr{B}$. Suppose $J^{\star}\in\mathscr{B}$ and $\mathscr{F}\in\sigma(\Gamma U)$ with $J^{\star}\in\mathscr{F}$. Since $\mathscr{B}$ is irreducible and $J^{\star}\mathscr{F}\cap\mathscr{B},\mathscr{O}_{t}\cap\mathscr{B}\neq\emptyset$, we have $\mathscr{B}\cap\mathscr{O}_{t}\cap\mathscr{F}\neq\emptyset$. This entails that $J^{\star}\in\overline{\mathscr{R}_{t}}_{\Sigma\Gamma U}$. Hence, $\overline{\mathscr{R}_{t}}_{\Sigma\Gamma U}=\mathscr{B}$. Define a mapping $r_{t}:\Gamma U\longrightarrow\Gamma U$ by $$r_{t}(L)=L\cup\dn_{U}t.$$ Suppose $\{L_{i}\mid i\in I\}$ is a directed subfamily of $\Gamma U$. Then $$r_{t}(\bigvee\limits_{i\in I}\limits^{\Gamma U}L_{i})=\dn_{U}t\cup\overline{(\bigcup\{L_{i}\mid i\in I\})}_{\Sigma\Gamma U}=\overline{(\dn_{U}t\cup\bigcup\{L_{i}\mid i\in I\})}_{\Sigma\Gamma U}$$ and
$$\bigvee\limits_{i\in I}\limits^{\Gamma U}r_{t}(L_{i})=\overline{(\bigcup\{L_{i}\cup\dn_{U}t\mid i\in I\})}_{\Sigma\Gamma U}=\overline{(\dn_{U}t\cup\bigcup\{L_{i}\mid i\in I\})}_{\Sigma\Gamma U}.$$ This yields that $$r_{t}(\bigvee\limits_{i\in I}\limits^{\Gamma U}L_{i})=\bigvee\limits_{i\in I}\limits^{\Gamma U}r_{t}(L_{i}).$$ As a result, $r_{t}$ is Scott continuous. For each $R^{\ast}\in\mathscr{R}_{t}$, $t\in R^{\ast}$ and $R^{\ast}\in\mathscr{B}$. So we have $$r_{t}(R^{\ast})=R^{\ast}\cup\dn_{U}t=R^{\ast}\in\mathscr{B}.$$ This leads to $\mathscr{R}_{t}\subseteq r_{t}^{-1}(\mathscr{B})$. By the Scott continuity of $r_{t}$, $r_{t}^{-1}(\mathscr{B})$ is Scott closed on $\Sigma\Gamma U$. Consequently, $$\mathscr{B}=\overline{\mathscr{R}_{t}}_{\Sigma\Gamma U}\subseteq r_{t}^{-1}(\mathscr{B}).$$ In other words, $\forall\ K\in\mathscr{B}$, $K\cup\dn_{U}t\in\mathscr{B}$.

$\mathbf{Subclaim} \ 4.6$: $S\in\mathscr{B}$.
 For each finite subset $T\subseteq S\setminus F_{0}$, let $$Q_{T}=F_{0}\cup\{\dn_{U}t\mid t\in T\}.$$ By Subclaim 4.5, $Q_{T}\in\mathscr{B}$ for each $T$. Now, the family $$\mathscr{Q}=\{Q_{T}\mid T\subseteq S\setminus F_{0}\ \mbox{ is finite}\ \}$$ is directed and contained in $\mathscr{B}$. Since $\mathscr{B}$ is Scott closed in $\Sigma\Gamma U$ and $S\in \Gamma U$, we have $$\bigvee\limits_{\Gamma U}\mathscr{Q}=\overline{(\bigcup\mathscr{Q})}_{\Sigma U}=\overline{S}_{\Sigma U}=S\in\mathscr{B}.$$
By Subclaim 4.6, we can conclude that $\Sigma \Gamma U$ is sober.
\end{proof}

\begin{lemma} {\rm Let $P$ be a dcpo and $Q=U\cap V$ with $U\in\Gamma P,V\in\sigma(P)$. If $\Sigma \Gamma P$ is sober, then $\Sigma\Gamma Q$ is sober.}
\end{lemma}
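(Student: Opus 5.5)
We split the passage from $P$ to $Q=U\cap V$ into two steps: first to $U$, then to $U\cap V$. We treat $Q$ as a subposet of $P$ throughout.

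\textbf{Step 1: from $P$ to $U$.} Since $U\in\Gamma P$, the subposet $U$ is a dcpo. Directed sups computed in $U$ agree with those computed in $P$, because $U$ is closed under directed sups and is a lower set. Hence $\Gamma U=\{C\in\Gamma P\mid C\subseteq U\}=\dn_{\Gamma P}U$, a principal ideal of the complete lattice $\Gamma P$. Directed sups in $\Gamma U$ are closures of unions, and the closure of a subset of $U$ is the same in $\Sigma U$ and in $\Sigma P$. So $\Gamma U$ is a Scott closed subset of $\Gamma P$, and the Scott topology of the dcpo $\Gamma U$ coincides with the subspace topology induced from $\Sigma\Gamma P$. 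For the latter claim we use the standard fact that a Scott closed subset of a dcpo carries the induced Scott topology: closed sets of the subposet are closed in the whole by the argument of Lemma 4.1 dualized, and conversely. Closed subspaces of sober spaces are sober, so $\Sigma\Gamma U$ is sober.

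\textbf{Step 2: from $U$ to $U\cap V$.} $Q=U\cap V$ is Scott open in the dcpo $U$, by Lemma 4.1 with the roles adapted, or directly: it is upper in $U$ and inaccessible by directed sups of $U$, since $V$ is. We would now like to invoke Lemma 4.2 with the pair $(U,Q)$, giving $\Sigma\Gamma Q$ sober. But Lemma 4.2 carries the hypothesis $N(U)\subseteq Q$, which need not hold, so instead we reprove the direction ``$\Sigma\Gamma U$ sober $\Rightarrow$ $\Sigma\Gamma V$ sober''. That is Claim 3 in the proof of Lemma 4.2, and it used only Claims 1 and 2: the Scott continuity of $q(F)=F\cap Q$ and $e(G)=\overline{G}_{\Sigma U}$, together with $q\circ e=\mathrm{id}$. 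Neither claim used the hypothesis on $N(U)$; they rely only on $Q\in\sigma(U)$ and Lemma 4.1. So $\Sigma\Gamma Q$ is a retract of $\Sigma\Gamma U$, and the argument of Claim 3 (or the general fact that retracts of sober spaces are sober) gives sobriety of $\Sigma\Gamma Q$.

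\textbf{Main obstacle.} The delicate point is checking in Step 1 that the Scott topology on $\Gamma U$ is exactly the subspace topology from $\Sigma\Gamma P$. This requires that $\Gamma U$ is Scott closed in $\Gamma P$ and that directed sups in $\Gamma U$ agree with those in $\Gamma P$. Alternatively, Step 1 can also be done by a retraction: $C\mapsto C\cap U$ is Scott continuous from $\Gamma P$ to $\Gamma U$ (it preserves directed sups since $U$ is closed), and inclusion $\Gamma U\hookrightarrow\Gamma P$ is Scott continuous. So $\Gamma U$ is again a retract and Claim 3's argument applies verbatim, avoiding the topology comparison altogether. I would present this retract version for both steps uniformly.
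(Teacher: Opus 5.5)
Your main line is correct and is essentially the paper's proof. Step 1 is the paper's closed-subspace argument: $\Gamma U=\dn_{\Gamma P}U$ is Scott closed in $\Gamma P$ with the same directed joins, so an irreducible closed $\mathscr{A}\subseteq\Gamma U$ is irreducible closed in $\Sigma\Gamma P$ and has a top. Step 2 is exactly the paper's reuse of Claims 1--3 of Lemma 4.2 for $Q\in\sigma(U)$. You are right that those claims never use $N(U)\subseteq V$.

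However, do not switch to the ``retract version for both steps'' you announce at the end. The map $C\mapsto C\cap U$ from $\Gamma P$ to $\Gamma U$ is in general \emph{not} Scott continuous when $U$ is Scott closed, and your justification ``it preserves directed sups since $U$ is closed'' is false. Intersection with a Scott \emph{open} $V$ commutes with directed joins of closed sets because $\overline{X}\cap V\subseteq\overline{X\cap V}$; that is what Claim 1 of Lemma 4.2 uses. For closed $U$ only $\overline{X\cap U}\subseteq\overline{X}\cap U$ holds.

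Here is a concrete counterexample. Take $P=\{a_0<a_1<\cdots\}\cup\{b,\top\}$ with $a_n<\top$, $b<\top$, and $b$ incomparable to every $a_n$. Then $U=\{b\}\in\Gamma P$. The sets $C_n=\dn a_n$ form a directed family in $\Gamma P$ whose join is $\overline{\{a_n\mid n\in\mathbb{N}\}}_{\Sigma P}=P$, since it must contain $\top=\bigvee_n a_n$. Hence $\bigl(\bigvee_{\Gamma P}C_n\bigr)\cap U=\{b\}$, while $\bigvee_{\Gamma U}(C_n\cap U)=\emptyset$.

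Consequently the Claim 3 argument, which needs $q^{-1}(\mathscr{A})$ to be Scott closed, cannot be run with $q(C)=C\cap U$. For instance, $q^{-1}(\{\emptyset\})$ contains every $C_n$ but not their join $P$. Keep your first version of Step 1; the ``topology comparison'' you wanted to avoid is exactly the easy verification you already carried out.
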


\begin{proof} By the Scott closedness of $U$, we obtain $$\Gamma U=\{A\in\Gamma P\mid A\subseteq U\}=\{A\cap U\mid A\in\Gamma P\}.$$
Let $\mathscr{A}\subseteq\Gamma U$ be an irreducible closed set in $\Sigma\Gamma U$. Clearly,
The inclusion mapping $i:\Gamma U\longrightarrow\Gamma P$  is Scott continuous. So $i(\mathscr{A})$ is irreducible in $\Sigma\Gamma P$. As $U\in\Gamma P$ and $\mathscr{A}\in\Gamma(\Gamma U)$, we deduce that $$\mathscr{A}=i(\mathscr{A})\in\Gamma(\Gamma P).$$
Then $\mathscr{A}$ contains a largest element $A\in\mathscr{A}$ because  $\Sigma \Gamma P$ is sober. Thus $\Sigma\Gamma U$ is sober.

As $U\in\Gamma P,V\in\sigma(P)$, we have $Q\in\sigma(U)$. By the proof of Claim 1, Claim 2 and Claim 3 in Lemma 4.2, $\Sigma\Gamma Q$ is sober.\end{proof}

\begin{lemma} {\rm (see \cite{AB75}) Let $P,Q$ be two posets. If both $P$ and $Q$ contain countably many ideals, none of which has a largest element, then $\Sigma(P\times Q)=\Sigma P\times\Sigma Q$.}
\end{lemma}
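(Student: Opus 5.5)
The inclusion $\sigma(P)\times\sigma(Q)\subseteq\sigma(P\times Q)$ always holds, so only the reverse inclusion needs proof. Fix $W\in\sigma(P\times Q)$ and $(x,y)\in W$. I will build $U\in\sigma(P)$ and $V\in\sigma(Q)$ with $x\in U$, $y\in V$ and $U\times V\subseteq W$. The construction is a countable back-and-forth. It uses two facts: a subset is Scott open iff it is an upper set meeting every directed set whose supremum it contains, and it suffices to test ideals. Indeed, a directed set $D$ and its lower set $\dn D$ have the same supremum, and an upper set meets $D$ iff it meets $\dn D$. An ideal with a largest element is automatically handled by any upper set containing its supremum. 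So only the countably many ideals without a largest element matter. Enumerate them as $I_1,I_2,\dots$ in $P$ and $J_1,J_2,\dots$ in $Q$.

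Fix a bookkeeping sequence $(\epsilon_n,k_n)_{n\geq1}$ with $\epsilon_n\in\{P,Q\}$ and $k_n\in\mathbb N$, in which every pair occurs infinitely often. I will construct finite sets $F_0\subseteq F_1\subseteq\cdots\subseteq P$ and $G_0\subseteq G_1\subseteq\cdots\subseteq Q$, starting from $F_0=\{x\}$ and $G_0=\{y\}$. The invariant is $\up F_n\times\up G_n\subseteq W$. This holds at stage $0$ because $W$ is an upper set.

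Suppose stage $n$ treats the ideal $I=I_{k_n}$ of $P$. If $\bigvee I$ does not exist, or $\bigvee I\notin\up F_{n-1}$, set $F_n=F_{n-1}$ and $G_n=G_{n-1}$. Otherwise, for each $g\in G_{n-1}$ we have $(\bigvee I,g)\in W$. The set $I\times\{g\}$ is directed in $P\times Q$ with supremum $(\bigvee I,g)$, so Scott openness of $W$ yields some $a_g\in I$ with $(a_g,g)\in W$. Since $G_{n-1}$ is finite and $I$ is directed, there is $a\in I$ above all $a_g$. Put $F_n=F_{n-1}\cup\{a\}$ and $G_n=G_{n-1}$. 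The invariant is preserved: a new pair $(u,v)$ with $u\geq a$ and $v\geq g\in G_{n-1}$ satisfies $(u,v)\geq(a_g,g)\in W$. Stages treating ideals of $Q$ are symmetric.

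Set $U=\bigcup_n\up F_n$ and $V=\bigcup_n\up G_n$. These are upper sets containing $x$ and $y$. Moreover $U\times V\subseteq W$, since any $(u,v)$ with $u\in\up F_m$ and $v\in\up G{}_{m'}$ lies in $\up F_n\times\up G_n$ for $n=\max(m,m')$. For Scott openness of $U$, let $I_k$ have a supremum lying in $U$, say in $\up F_m$. Some stage $n>m$ treats $I_k$; there $\bigvee I_k\in\up F_{n-1}$, so an element of $I_k$ is added and $I_k\cap U\neq\emptyset$. The same argument applies to $V$.

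The main point to get right is the bookkeeping: each ideal must be revisited after its supremum has entered the current up-set. That requires the enumeration to repeat every ideal infinitely often, which is where countability is used. Finiteness of $F_n$ and $G_n$ is also essential, since it allows a single element of a directed ideal to serve all current partners simultaneously.
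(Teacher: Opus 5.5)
The paper does not prove this lemma; it only cites \cite{AB75}. So there is no in-paper argument to compare yours against, and your proof stands on its own as a correct, complete, self-contained argument.

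The key steps all check out:
\begin{itemize}
\item The reduction from directed sets to ideals without a largest element is valid for upper sets.
\item At each stage, $(\bigvee I,g)\in\up F_{n-1}\times\up G_{n-1}\subseteq W$, and $I\times\{g\}$ has supremum $(\bigvee I,g)$ in $P\times Q$, so Scott openness of $W$ produces the elements $a_g\in I$.
\item Finiteness of $G_{n-1}$ together with directedness of $I$ gives a single $a\in I$ dominating every $a_g$. The invariant $\up F_n\times\up G_n\subseteq W$ then follows because $W$ is an upper set.
\item Because every pair occurs infinitely often in the bookkeeping sequence, each ideal is revisited after its supremum has entered $U$ (resp.\ $V$). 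This is exactly what Scott openness of $U$ and $V$ requires.
\end{itemize}
The argument also makes clear why both countability hypotheses are used. A single $\omega$-indexed schedule must interleave the ideals of both factors while keeping $F_n$ and $G_n$ finite at every stage. A transfinite version would produce infinite sets at limit stages, and the directedness step would break.

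Two points are worth a line each, since $P$ and $Q$ are arbitrary posets rather than dcpos:
\begin{itemize}
\item If there are only finitely many (or no) ideals without a largest element, stages indexed by a nonexistent ideal should simply be skipped.
\item The inclusion $\sigma(P)\times\sigma(Q)\subseteq\sigma(P\times Q)$ rests on one fact. If $\bigvee D$ exists in $P\times Q$, its coordinates are the suprema in $P$ and $Q$ of the two projections of $D$. For example, if $p'$ is any upper bound of the first projection, then $(p',q)$ is an upper bound of $D$, so $p\leq p'$.
\end{itemize}
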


\begin{lemma} {\rm(see \cite{RTW1}) Let $P$ be a dcpo. If for each $n$, $$\Sigma(\prod\limits^{n}P)=\prod\limits^{n}\Sigma P,$$ then $\sigma(\Gamma P)=\upsilon(\Gamma P)$.}
\end{lemma}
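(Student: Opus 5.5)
The inclusion $\upsilon(\Gamma P)\subseteq\sigma(\Gamma P)$ always holds, because each principal lower set $\dn_{\Gamma P}A$ is Scott closed. So the work is the reverse inclusion. For this I use the standard subbasis of the upper topology. For $A\in\Gamma P$ the set $\Gamma P\setminus\dn_{\Gamma P}A$ equals $\{C\in\Gamma P\mid C\cap(P\setminus A)\neq\emptyset\}$. Hence the sets
$$\langle U_1,\dots,U_n\rangle=\{C\in\Gamma P\mid C\cap U_i\neq\emptyset \text{ for } i=1,\dots,n\},\qquad U_i\in\sigma(P),$$
are open in $\upsilon(\Gamma P)$. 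It therefore suffices to show: for every $\mathscr{U}\in\sigma(\Gamma P)$ and every $C\in\mathscr{U}$ there are $U_1,\dots,U_n\in\sigma(P)$ with $C\in\langle U_1,\dots,U_n\rangle\subseteq\mathscr{U}$. The case $\emptyset\in\mathscr{U}$ is trivial, since then $\mathscr{U}=\Gamma P$.

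\textbf{Step 1: reduce to a finitely generated set.} Let $C\in\mathscr{U}$ with $C\neq\emptyset$. The family $\{\dn F\mid F\subseteq C\text{ finite, nonempty}\}$ is directed in $\Gamma P$. Its supremum is $\overline{\bigcup\dn F}_{\Sigma P}=C$. Since $\mathscr{U}$ is Scott open, some $F=\{x_1,\dots,x_n\}\subseteq C$ satisfies $\dn F\in\mathscr{U}$.

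\textbf{Step 2: a continuous map from the $n$-fold product.} Define $\varphi_n:\prod^n P\longrightarrow\Gamma P$ by $\varphi_n(y_1,\dots,y_n)=\dn\{y_1,\dots,y_n\}$. I will check that $\varphi_n$ is Scott continuous; this is the step needing the most care. Let $\{(d^j_1,\dots,d^j_n)\}_{j}$ be directed in $\prod^n P$. Its supremum is $(\bigvee_j d^j_1,\dots,\bigvee_j d^j_n)$, computed coordinatewise. The closure of $\bigcup_j\dn\{d^j_1,\dots,d^j_n\}$ is contained in $\dn\{\bigvee_j d^j_i\}_i$, since the latter set is closed and contains every term. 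Conversely, each $\bigvee_j d^j_i$ is a directed supremum of elements of that union, so it lies in the closure. Hence $\varphi_n$ preserves directed suprema and is Scott continuous. Consequently $\varphi_n^{-1}(\mathscr{U})\in\sigma(\prod^n P)$, and it contains $(x_1,\dots,x_n)$.

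\textbf{Step 3: use the hypothesis.} By assumption $\sigma(\prod^n P)$ is the product topology $\prod^n\sigma(P)$. So there are $U_i\in\sigma(P)$ with $x_i\in U_i$ and $U_1\times\cdots\times U_n\subseteq\varphi_n^{-1}(\mathscr{U})$. Then $C\in\langle U_1,\dots,U_n\rangle$, because $x_i\in C\cap U_i$.

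Now take any $D\in\langle U_1,\dots,U_n\rangle$ and pick $y_i\in D\cap U_i$. Then $\dn\{y_1,\dots,y_n\}=\varphi_n(y_1,\dots,y_n)\in\mathscr{U}$, and this set is contained in $D$. Since $\mathscr{U}$ is an upper set in $\Gamma P$, we get $D\in\mathscr{U}$. Thus $C\in\langle U_1,\dots,U_n\rangle\subseteq\mathscr{U}$, so $\mathscr{U}\in\upsilon(\Gamma P)$ and $\sigma(\Gamma P)=\upsilon(\Gamma P)$.
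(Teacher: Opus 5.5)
The paper does not prove this lemma; it only quotes it from the cited reference, so there is no in-paper argument to compare yours against. Judged on its own, your proof is correct and complete.

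Each step checks out:
\begin{enumerate}
\item The sets $\dn F$, with $F\subseteq C$ finite and nonempty, form a directed family in $\Gamma P$. Its join is the closure of $\bigcup_F\dn F=C$, which is $C$ itself, so Scott openness of $\mathscr{U}$ gives some $\dn F\in\mathscr{U}$.
\item The map $\varphi_n$ preserves directed suprema. The closure of $\bigcup_j\dn\{d^j_1,\dots,d^j_n\}$ is a Scott closed lower set, so it contains each coordinatewise supremum $\bigvee_j d^j_i$. It is also contained in $\dn\{\bigvee_j d^j_1,\dots,\bigvee_j d^j_n\}$.
\item The product hypothesis is used exactly once, to shrink $\varphi_n^{-1}(\mathscr{U})$ to a box $U_1\times\cdots\times U_n$ around $(x_1,\dots,x_n)$. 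After that, $\dn\{y_1,\dots,y_n\}\subseteq D$ and the upward closure of $\mathscr{U}$ give $\langle U_1,\dots,U_n\rangle\subseteq\mathscr{U}$.
\end{enumerate}

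Your argument also yields a converse. The hypothesis enters only through the continuity of each $\varphi_n$ from $\prod^n\Sigma P$ (the product topology) to $\Sigma\Gamma P$. Conversely, $\varphi_n^{-1}(\langle U\rangle)$ is always the union of the cylinders $\{(y_1,\dots,y_n)\mid y_i\in U\}$, because $U$ is an upper set. Hence $\varphi_n$ is always continuous from the product topology into $(\Gamma P,\upsilon(\Gamma P))$. Together these show that $\sigma(\Gamma P)=\upsilon(\Gamma P)$ holds exactly when all the maps $\varphi_n$ are continuous for the product topologies.
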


\begin{lemma} {\rm (see \cite{EEF22})Let $P$ be a dcpo. Then $(P,\upsilon(P))$ is sober if and only if for each irreducible subset $A$ in $(P,\upsilon(P))$, $\bigvee\limits_{P}A$ exists.}
\end{lemma}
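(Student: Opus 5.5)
The plan is to argue directly from the definition of the upper topology $\upsilon(P)$, whose subbasic closed sets are the principal ideals $\dn x$ ($x\in P$). Its closed sets are therefore exactly the intersections of sets of the form $\dn F$ with $F\subseteq P$ finite (including $F=\emptyset$). I first record three standard facts. The specialization order of $(P,\upsilon(P))$ coincides with the order of $P$, since $\overline{\{x\}}=\dn x$; in particular the space is $T_0$. Each $\dn x$ is $\upsilon$-closed. For any $A\subseteq P$, the closure $\overline{A}$ in $(P,\upsilon(P))$ is the intersection of all $\dn F$ with $F$ finite and $A\subseteq\dn F$.

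For the forward direction, assume $(P,\upsilon(P))$ is sober and let $A$ be irreducible.
\begin{enumerate}
\item The closure $\overline{A}$ is again irreducible, so by sobriety $\overline{A}=\dn x$ for some $x\in P$.
\item Since $A\subseteq\dn x$, the element $x$ is an upper bound of $A$.
\item If $u$ is any upper bound of $A$, then $A\subseteq\dn u$, and $\dn u$ is closed. Hence $\dn x=\overline{A}\subseteq\dn u$, which gives $x\le u$.
\end{enumerate}
Thus $x=\bigvee_P A$, so the supremum exists.

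For the converse, let $A$ be an irreducible closed subset of $(P,\upsilon(P))$. By hypothesis $a=\bigvee_P A$ exists, and I will show $A=\dn a$, which gives sobriety.
\begin{enumerate}
\item Since $\dn a$ is closed and contains $A$, we have $A\subseteq\dn a$.
\item Since $A$ is a lower set, it suffices to show $a\in A$. As $A$ is closed, this amounts to showing that $a$ lies in every basic closed set $\dn F$ with $F$ finite and $A\subseteq\dn F$.
\item Because $A\neq\emptyset$, the set $F$ is nonempty. Write $\dn F=\bigcup_{f\in F}\dn f$, a finite union of closed sets.
\item By irreducibility, extended by induction from two closed sets to finitely many, $A\subseteq\dn f$ for some $f\in F$.
\item Then $f$ is an upper bound of $A$, so $a\le f$ and hence $a\in\dn F$.
\end{enumerate}
Therefore $a\in\overline{A}=A$, and so $A=\dn a=\overline{\{a\}}$.

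The main point requiring care is the description of $\upsilon$-closed sets as intersections of finitely generated lower sets $\dn F$. This is what lets irreducibility reduce the condition $A\subseteq\dn F$ to $A\subseteq\dn f$ for a single $f$. Beyond that the argument is routine, and the dcpo hypothesis is not actually used.
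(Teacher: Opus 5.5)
Your proof is correct. The paper itself gives no proof of this lemma: it only quotes the result from the literature, so there is no internal argument to compare yours against.

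Your route is the standard direct one.
\begin{itemize}
\item In the forward direction, the generic point $x$ with $\overline{A}=\dn x$ is forced to be $\bigvee_P A$. This is because $\dn u$ is $\upsilon$-closed for every upper bound $u$ of $A$.
\item In the converse, you describe the $\upsilon$-closed sets as intersections of finitely generated lower sets $\dn F$. Combined with irreducibility, extended to finite unions, this puts $a=\bigvee_P A$ into every such $\dn F$ containing $A$, so $a\in A$ and $A=\dn a$.
\end{itemize}

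Your version also makes two small points explicit.
\begin{itemize}
\item The dcpo hypothesis is superfluous: the equivalence holds for every poset.
\item The converse only needs suprema of irreducible \emph{closed} sets.
\end{itemize}

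This fits how the lemma is actually used in the paper. In Claim 11 of the proof that $\Sigma\Gamma C_0$ is sober, it is applied to the complete lattice $\Gamma\widehat{D}$, after $\sigma(\Gamma\widehat{D})$ has been identified with $\upsilon(\Gamma\widehat{D})$. Your argument shows directly that the upper topology of any complete lattice is sober, so the supremum condition is automatic there.
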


Fix  $$k\in\mathbb{N},\ (a,b)\in\bigtriangleup \ \text{and}\  r_{0}\in\mathbb{N}.$$
Let $\mathcal{T}_{0}$ be the set obtained by adjoining the empty word to $\mathbb{N}^{<\omega}$. For each $s\in\mathcal{T}_{0}$, we write $m_{s}$ for $f_{a,b}(r_{0}.s)$. Take
$$C_{0}=\{(m_{s},k,z)\mid s\in\mathcal{T}_{0},z\in L\}.$$
Then $C_{0}$ is a subdcpo of $P_{1}$. For each $s\in\mathcal{T}_{0}$, let $t_{s}=T_{m_{s},k}=(m_{s},k,\top)$, $A_{s}=\{(m_{s},k,j_{a,b})\mid j\in\mathbb{N}\}$ and
$$P_{r_{s}}=\{(m_{s},k,j_{c,d})\mid j\in\mathbb{N},(c,d)\in\triangle, (c,d)\neq(a,b)\}\cup\{(m_{s},k,w_{c,d})\mid w\in\mathbb{N}^{<\omega},(c,d)\in\triangle\}.$$ Furthermore, by the order on $P_{1}$, for each $l\in\mathbb{N}$, $$(m_{s},k,j_{a,b})<t_{sl}\Longleftrightarrow j\leq l.$$

\begin{lemma} {\rm Let $E\subseteq C_{0}$ be a directed subset with $\bigvee\limits_{C_{0}}E\not\in E$ and $x\in P_{r_{s}}$ such that $x\leq \bigvee\limits_{C_{0}}E$. Then $\bigvee\limits_{C_{0}}E=t_{s}$.}
\end{lemma}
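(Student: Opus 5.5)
The plan is to show that every upper bound of $x$ in $C_0$ other than $t_s$ has a finite principal ideal in $C_0$, and that this forces $\bigvee_{C_0}E\in E$ unless $\bigvee_{C_0}E=t_s$. The argument reads the order of $P_1$ restricted to $C_0$ directly off the generating relations $\sqsubset_1,\dots,\sqsubset_4$ and their composites.

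\textbf{Step 1: the order on $C_0$.} Every element of $C_0$ has the form $(m_{s'},k,z)$, and all such elements share the second coordinate $k$.
\begin{enumerate}
\item Relations $\sqsubset_2$ and $\sqsubset_3$ always raise the second coordinate from $n$ to $n+1$. So they, and their composites with $\sqsubset_1$, never relate two elements of $C_0$.
\item Relation $\sqsubset_1$ stays inside a single fibre $\{m_{s'}\}\times\{k\}\times L$, which is a copy of $L$.
\item Relation $\sqsubset_4$ needs a first coordinate of the form $f_{c,d}(u)$ together with third coordinate $j_{c,d}$. Every $m_{s'}$ lies in $i(a,b)$, which is disjoint from $i(c,d)$ whenever $(c,d)\neq(a,b)$. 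Hence the only $\sqsubset_4$-links inside $C_0$ are $(m_{s'},k,j_{a,b})<t_{s'j}$, which come from elements of $A_{s'}$.
\item Every target of $\sqsubset_2,\sqsubset_3,\sqsubset_4$ is some $T_{m,n}$. So a non-top element $(m_{s'},k,y_{c,d})$ has only fibre elements below it: its principal ideal in $C_0$ is $\{(m_{s'},k,y'_{c,d})\mid y'\leq_M y\}$.
\end{enumerate}

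\textbf{Step 2: the upper set of $x$.} Write $x=(m_s,k,z_{c,d})$ with $x\in P_{r_s}$. Then $x\notin A_s$, so no $\sqsubset_4$-link starts at $x$ or at anything $\sqsubset_1$-above it in the fibre. Consequently
\[
\up_{C_0}x\subseteq\{(m_s,k,y_{c,d})\mid z\leq_M y\}\cup\{t_s\}.
\]

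\textbf{Step 3: conclusion.} Suppose $\bigvee_{C_0}E\neq t_s$. By Step 2, $\bigvee_{C_0}E=(m_s,k,y_{c,d})$ for some $y\in M$. In $M$ the set $\dn y$ is finite: if $y$ is a word it is its finite set of prefixes, and if $y\in\mathbb{N}$ it is a finite initial segment. By Step 1(4), $E\subseteq\dn_{C_0}\bigvee_{C_0}E$ is therefore a directed subset of a finite chain. Such a set has a largest element, and that element must be $\bigvee_{C_0}E$. This gives $\bigvee_{C_0}E\in E$, contradicting the hypothesis, so $\bigvee_{C_0}E=t_s$.

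The main obstacle is Step 1: one must check carefully that no composite relation ($\sqsubset_1;\sqsubset_i$) and no transitivity chain leaks into $C_0$ from outside it. Once the disjointness of the sets $i(c,d)$ and the shift of the second coordinate are used, this case check is mechanical.
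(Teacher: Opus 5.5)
Your argument is correct and uses the same two ingredients as the paper's proof, only in reverse order. The first ingredient is that non-top elements of $C_0$ have finite chains as principal ideals, so a non-attained directed supremum must be a top. The second is that $x\in P_{r_s}$ can reach a top only via $\sqsubset_1$ inside its own fibre, because $\sqsubset_2,\sqsubset_3$ change the level and $\sqsubset_4$ needs a source in some $A_{s'}$. The paper first writes $\bigvee_{C_0}E=t_{s^*}$ and then forces $s^*=s$, whereas you first confine $\up_{C_0}x$ to the fibre over $m_s$.

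Your version is more complete than the paper's: it justifies the finiteness of principal ideals, which the paper only asserts, and it rules out the composite $\sqsubset_1;\sqsubset_4$, which the paper's case split omits. One step in your Step 2 deserves an explicit sentence: the non-top fibre elements above $x$ keep $x$'s subscript $(c,d)$ and its word/number type, so they also lie outside $A_s$.
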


\begin{proof} As $E\subseteq C_{0}$ is a directed and $\bigvee\limits_{C_{0}}E\not\in E$, we know $\bigvee\limits_{C_{0}}E=t_{s^{\ast}}$ for some $s^{\ast}\in\mathcal{T}_{0}$. By assumption, we just consider the following 2 cases.

Case 1: $x=(m_{s},k,j_{c,d})$ for some $(c,d)\neq(a,b)$.

Then we have $(m_{s},k,j_{c,d})\leq t_{s^{\ast}}=(f_{a,b}(r_{0}.s^{\ast}),k,\top)$. By the order of $P_{1}$, $$(m_{s},k,j_{c,d})\sqsubset_1 t_{s^{\ast}}=(f_{a,b}(r_{0}.s^{\ast}),k,\top) \ \text{or}\ \ (m_{s},k,j_{c,d})\sqsubset_4 t_{s^{\ast}}=(f_{a,b}(r_{0}.s^{\ast}),k,\top).$$

If $(m_{s},k,j_{c,d})\sqsubset_1 t_{s^{\ast}}=(f_{a,b}(r_{0}.s^{\ast}),k,\top)$, then  $f_{a,b}(r_{0}.s^{\ast})=m_{s}=f_{a,b}(r_{0}.s)$. Thus $s^{\ast}=s$ and hence $\bigvee\limits_{C_{0}}E=t_{s}$.

If $(m_{s},k,j_{c,d})\sqsubset_4 t_{s^{\ast}}=(f_{a,b}(r_{0}.s^{\ast}),k,\top)$, then $(c,d)=(a,b)$, impossible.

Case 2: $x=(m_{s},k,w_{c,d})$ for some $w\in\mathbb{N}^{<\omega}$ and $(c,d)\in\triangle$.

Then $(m_{s},k,w_{c,d})\leq t_{s^{\ast}}=(f_{a,b}(r_{0}.s^{\ast}),k,\top)$. Then by the order on $P_{1}$, we assert that
$$(m_{s},k,j_{c,d})\sqsubset_1 t_{s^{\ast}}=(f_{a,b}(r_{0}.s^{\ast}),k,\top).$$

Similarly, we obtain $m_{s}=f_{a,b}(r_{0}.s^{\ast})$. Therefore, $s=s^{\ast}$ and thus $\bigvee\limits_{C_{0}}E=t_{s}$. \end{proof}

\begin{lemma} {\rm $\Sigma\Gamma C_{0}$ is sober.}
\end{lemma}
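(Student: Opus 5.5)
The plan is to verify sobriety of $\Sigma\Gamma C_{0}$ directly, by rerunning the argument of Claim 4 in Lemma 4.2 with $U=C_{0}$. The point is that $C_{0}$ has very few non-compact elements, and each of them is reachable from compact ones. The first step is to describe $C_{0}$ and compute
$$N(C_{0})=\{\bigvee D\mid D\subseteq C_{0}\ \mbox{directed},\ \bigvee D\not\in D\}.$$
By the definition of the order, the only relations in $C_{0}$ are the following.
\begin{itemize}
\item Inside a fiber $\{(m_{s},k,z)\mid z\in L\}$, which is an order copy of $L$, via $\sqsubset_1$.
\item The $\sqsubset_4$-relations $(m_{s},k,j_{a,b})<t_{sl}$ for $j\leq l$, together with their $\sqsubset_1;\sqsubset_4$ consequences. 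Since $m_{s}\in i(a,b)$ and the sets $i(c,d)$ are pairwise disjoint, no other $\sqsubset_4$-relations occur. The relations $\sqsubset_2$ and $\sqsubset_3$ change the second coordinate to $k+1$, so they leave $C_{0}$.
\end{itemize}
Every $t_{s}$ is maximal. Hence a directed $E$ without largest element contains no $t$'s, and cofinally it runs along an infinite increasing chain in one copy of $M$ inside a single fiber, possibly mixed with elements below it.

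The next step is to show that every such $E$ has supremum $t_{s}$ for the fiber $s$ in question. The one case needing care is an unbounded chain in $A_{s}$. Its upper bounds $t_{sl}$ only bound the members $j_{a,b}$ with $j\leq l$, so $t_{s}$ is the only upper bound. Lemma 4.7 gives the analogous statement for elements of $P_{r_{s}}$. Therefore $N(C_{0})=\{t_{s}\mid s\in\mathcal{T}_{0}\}$, and every element of $C_{0}$ other than the $t_{s}$ is compact in $C_{0}$. Moreover, each $t_{s}$ is the supremum of a chain of compact elements of its own fiber, for instance $\{(m_{s},k,j_{c,d})\mid j\in\mathbb{N}\}$.

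Now let $\mathscr{B}\subseteq\Gamma C_{0}$ be an irreducible Scott closed set and put $S=\bigcup\mathscr{B}$. By Subclaim 4.1, $S\in\Gamma C_{0}$. Let $t\in S$ be compact in $C_{0}$. Then $t$ is also compact in the Scott closed subposet $S$, because directed suprema in $S$ are computed in $C_{0}$. So the argument of Subclaim 4.5 applies verbatim with $F_{0}$ replaced by the empty requirement: $\up_{S}t$ is Scott open in $S$, the set $\mathscr{O}_{t}=\{R\in\Gamma C_{0}\mid t\in R\}$ is Scott open, irreducibility gives $\overline{\mathscr{O}_{t}\cap\mathscr{B}}=\mathscr{B}$, and the Scott continuous map $r_{t}(L)=L\cup\dn t$ then yields $K\cup\dn t\in\mathscr{B}$ for all $K\in\mathscr{B}$. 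Starting from any member of $\mathscr{B}$ and iterating over finite sets $T$ of compact elements of $S$, we get a directed family $\{K_{0}\cup\dn T\}\subseteq\mathscr{B}$. Its supremum in $\Gamma C_{0}$ is the Scott closure of $K_{0}\cup(S\setminus\{t_{s}\mid s\in\mathcal{T}_{0}\})$.

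Finally, each $t_{s}\in S$ lies in this closure. Indeed, $S$ is a lower set containing $t_{s}$, so it contains a chain of compact elements of the fiber of $s$ whose supremum is $t_{s}$. Hence the supremum equals $S$. Since $\mathscr{B}$ is Scott closed, $S\in\mathscr{B}$, and $S$ is then the largest element of $\mathscr{B}$. This proves sobriety.

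I expect the main obstacle to be the first step: the exhaustive computation of the directed suprema in $C_{0}$. One must check that no directed set has a supremum other than some $t_{s}$, and in particular rule out unwanted upper bounds coming from the $\sqsubset_4$-links between fibers. I would handle this by a case analysis on where a cofinal part of $E$ lives: the $\mathbb{N}$-branch with index $(a,b)$, the $\mathbb{N}$-branches with other indices, or the $\mathbb{N}^{<\omega}$-branches, using Lemma 4.7 for the latter two.
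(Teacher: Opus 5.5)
Your computation of $N(C_{0})$ is correct. A directed subset of $C_{0}$ without a largest element is a chain inside a single branch of a single fiber, and its supremum is the top of that fiber, so $N(C_{0})=\{t_{s}\mid s\in\mathcal{T}_{0}\}$. The gap is the next inference, that every element other than the $t_{s}$ is therefore compact. Lying outside $N(C_{0})$ is not compactness. In Subclaim 4.4 of Lemma 4.2, the points of $S\setminus F_{0}$ are compact because $F_{0}$ is a \emph{lower} set containing $N(S)$. The honest analogue here gives compactness only for points outside $\dn N(C_{0})$, and $\dn N(C_{0})=C_{0}$.

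Concretely, let $x=(m_{s},k,z)$ with $z\neq\top$ lie in the branch of $L$ indexed by $(c,d)$, and pick $(c',d')\in\Delta\setminus\{(c,d),(a,b)\}$. The elements $(m_{s},k,j_{c',d'})$ take part in no $\sqsubset_{4}$-relation inside $C_{0}$. So the only upper bound in $C_{0}$ of the chain $\{(m_{s},k,j_{c',d'})\mid j\in\mathbb{N}\}$ is $t_{s}$. Hence $x\leq t_{s}=\bigvee_{C_{0}}\{(m_{s},k,j_{c',d'})\mid j\in\mathbb{N}\}$, while no member of the chain lies above $x$. So $C_{0}$ has no compact elements at all. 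In particular, the chains you offer as chains of compact elements converging to $t_{s}$ are not of that kind.

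Everything after this collapses. For a non-compact $t$, the family $\mathscr{O}_{t}=\{R\in\Gamma C_{0}\mid t\in R\}$ is not Scott open. Indeed, the directed family $\{\dn d\mid d \text{ in the chain above}\}$ has supremum $\dn t_{s}\ni x$, yet no member contains $x$. So the density step $\overline{\mathscr{O}_{t}\cap\mathscr{B}}=\mathscr{B}$ and the use of $r_{t}$ have nothing to act on, and your directed family $\{K_{0}\cup\dn T\}$ is generated by the empty set.

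This is exactly where the technique of Lemma 4.2 breaks down, and the paper needs genuinely different ingredients:
\begin{enumerate}
\item It absorbs into $J$ all points that can be freely adjoined to every member of $\mathscr{A}$, and passes to $W=S\setminus J$. There, by the very mechanism you use, no point is compact.
\item It shows that a dense subfamily $\mathscr{O}_{s}$ of $\mathscr{C}$ avoids $P_{r_{s}}$. This uses a relative compactness of the points of $P_{r_{s}}$ inside closed sets omitting $t_{s}$ (Lemma 4.7 and Claim 6).
\item It collapses each $P_{r_{s}}$ onto $t_{s}$ via the retractions $p_{s}$ and $\rho$. This reduces the problem to $\widehat{D}$, the part of $W$ formed by the tops and the chains $A_{s}$.
\item The chains $A_{s}$ feed into the tops $t_{s\cdot l}$ of other fibers and cannot be collapsed. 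They are handled by observing that $\widehat{D}$ has only countably many non-principal ideals, so $\Sigma\Gamma\widehat{D}$ is sober by Lemmas 4.4--4.6.
\end{enumerate}
Your proposal has no substitute for either the collapsing step or this countability argument.
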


\begin{proof} Let $\mathscr{A}\subseteq\Gamma C_{0}$ be an irreducible closed subset in $\Sigma\Gamma C_{0}$. Take $$S=\bigcup\mathscr{A}.$$ By the proof of Subclaim 4.1 in the Lemma 4.2 , $S\in\Gamma C_{0}$. Consequently, $\Gamma S=\dn_{\Gamma C_{0}}S$ and hence $\mathscr{A}$ is also an irreducible Scott closed subset in $\Sigma\Gamma S$. Set $$J=\{x\in S\mid F\cup\dn_{S}x\in\mathscr{A}, \ \text{for each}\ F\in\mathscr{A}\}.$$

$\mathbf{Claim} \ 1$: $F\cup J\in\mathscr{A}$, for each $ F\in\mathscr{A}$.

Define a mapping $r_{F}:S\longrightarrow\Gamma S$ by $$\forall\ s\in S,\ r_{F}(s)=F\cup\dn_{S}s.$$
Let $D\subseteq S$ be a directed subset. Then we have $r_{F}(\bigvee\limits_{S}D)=F\cup\dn_{S}\bigvee\limits_{S}D$ and $$\bigvee\limits_{d\in D}\limits^{\Gamma S}r_{F}(d)=\overline{\{F\cup\dn_{S}d\mid d\in D\}}_{\Sigma S}=\overline{(F\cup\dn_{S}D)}_{\Sigma S}=\overline{F}_{\Sigma S}\cup\overline{\dn_{S}D}_{\Sigma S}=F\cup\dn_{S}\bigvee\limits_{S}D.$$ This means that $r_{F}$ is Scott continuous. Thus $$J=\bigcap\limits_{F\in\mathscr{A}}r_{F}^{-1}(\mathscr{A})$$ is Scott closed in the subdcpo $S$.
Suppose $E\subseteq J$ is finite. In convenience, we enumerate $E$ as $\{s_{0},s_{1},\cdot\cdot\cdot,s_{n}\}$. Since each $s_{i}$ ($0\leq i\leq n$) belongs to $J$, we deduce by introduction that $F\cup\dn E\in\mathscr{A}$. Now, we know that $$\mathscr{A}_{J}=\{F\cup\dn E\mid E \ \text{is a finite subset of}\ J\}$$ is a directed subset of $\mathscr{A}$. As $\mathscr{A}$ is Scott closed in $\Gamma S$, we get $$F\cup J=\overline{\bigcup\mathscr{A}_{J}}_{\Sigma S}=\bigvee\limits_{\Gamma S}\mathscr{A}_{J}\in\mathscr{A}.$$ Particularly, $J\in\mathscr{A}$ by taking $F=\emptyset\in\mathscr{A}$.

Let $W=S\setminus J$. Then $W\in\sigma(S)$ and $W$ is a subdcpo of $S$.
Define two mappings $q_{0}:\Gamma S\longrightarrow\Gamma W$ and $e_{0}:\Gamma W\longrightarrow\Gamma S$ by $$ q_{0}(F)=F\cap W\ \text{and}\ e_{0}(H)=J\cup H.$$

$\mathbf{Claim} \ 2$: $q_{0}$ and $e_{0}$ are Scott continuous.

For each $H\in\Gamma W$, we have $$S\setminus(J\cup H)=W\setminus H\in\sigma(W)\subseteq\sigma(S).$$ This means that $J\cup H$ is Scott closed in $S$. So $e_{0}$ is well-defined. By the proof of the Claim 1 in Lemma 4.2, $q_{0}$ is Scott continuous. Suppose $\{H_{i}\mid i\in I\}\subseteq\Gamma W$ is directed. Then $$e_{0}(\bigvee\limits_{i\in I}\limits^{\Gamma W}H_{i})=e_{0}(\overline{\bigcup\{H_{i}\mid i\in I\}}_{\Sigma W})=J\cup\overline{\bigcup\{H_{i}\mid i\in I\}}_{\Sigma W}$$ and
$$\bigvee\limits_{i\in I}\limits^{\Gamma S}e_{0}(H_{i})=\overline{\bigcup\{J\cup H_{i}\mid i\in I\}}_{\Sigma S}=J\cup\overline{\bigcup\{H_{i}\mid i\in I\}}_{\Sigma S}.$$ Clearly, $\bigvee\limits_{i\in I}\limits^{\Gamma S}e_{0}(H_{i})\subseteq e_{0}(\bigvee\limits_{i\in I}\limits^{\Gamma W}H_{i})$. Choose a $s^{\ast}\in\overline{\bigcup\{H_{i}\mid i\in I\}}_{\Sigma W}$ but $s^{\ast}\not\in J$. Let $V\in\sigma(S)$ and $s^{\ast}\in V$. Then $s^{\ast}\in V\cap(S\setminus J)=V\cap W$ and by Lemma 4.1, $V\cap W\in\sigma(W)$. It follows form $s^{\ast}\in\overline{\bigcup\{H_{i}\mid i\in I\}}_{\Sigma W}$ that $$V\cap W\cap(\bigcup\{H_{i}\mid i\in I\})\neq\emptyset.$$ As a consequence, $s^{\ast}\in\overline{\bigcup\{H_{i}\mid i\in I\}}_{\Sigma S}$. Hence, $\bigvee\limits_{i\in I}\limits^{\Gamma S}e_{0}(H_{i})=e_{0}(\bigvee\limits_{i\in I}\limits^{\Gamma W}H_{i})$. Proving that $e_{0}$ is Scott continuous.

It is not difficult to check that $q_{0}\circ e_{0}=id$ and $e_{0}\circ q_{0}(F)=F\cup J$ for each $F\in\Gamma S$.

Set $$\mathscr{C}=q_{0}(\mathscr{A})$$. By Claim 1, $q_{0}(\mathscr{A})\subseteq e_{0}^{-1}(\mathscr{A})$. As $q_{0}\circ e_{0}=id$, $e_{0}^{-1}(\mathscr{A})\subseteq q_{0}(\mathscr{A})$. In conclusion, $$\mathscr{C}=q_{0}(\mathscr{A})=e_{0}^{-1}(\mathscr{A}).$$ By Claim 2, we obtain $\mathscr{C}$ is an irreducible Scott closed subset in $\Sigma\Gamma W$. Besides, $$\bigcup\mathscr{C}=\bigcup q_{0}(\mathscr{A})=\bigcup\limits_{F\in\mathscr{A}}(F\cap W)=W\cap S=W.$$

$\mathbf{Claim} \ 3$: For each $w\in W$, there exists a $C\in\mathscr{C}$ such that $C\cup\dn_{W}w\not\in\mathscr{C}$.

Assume that there is a $w^{\ast}\in W$ such that for each $C^{\ast}\in\mathscr{C}$, $C^{\ast}\cup\dn_{W}w^{\ast}\in\mathscr{C}$. Then for each $F^{\ast}\in \mathscr{A}$, $(F^{\ast}\cap W)\cup\dn_{W}w^{\ast}\in\mathscr{C}$. This yields  $$e_{0}((F^{\ast}\cap W)\cup\dn_{W}w^{\ast})=J\cup((F^{\ast}\cap W)\cup\dn_{W}w^{\ast})=J\cup F^{\ast}\cup\dn_{S}w^{\ast}\in \mathscr{A}.$$ As $\mathscr{A}$ is Scott closed in $\Gamma S$ and $F^{\ast}\cup\dn_{S}w^{\ast}\in\Gamma S$, we have $F^{\ast}\cup\dn_{S}\in\mathscr{A}$. Consequently, $w^{\ast}\in J$, a contradiction. So Claim 3 holds.

Next, we consider the following 2 cases:

Case 1: $W=\emptyset$.

In this case, $S=J\in\mathscr{A}$. Hence, $\mathscr{A}$ contains a largest element $S$.

Case 2: $W\neq\emptyset$.

$\mathbf{Claim} \ 4$: Each element of $W$ is not compact in the subdcpo $W$.

Assume that $w_{1}$ is compact in $W$.  Then $\mathscr{H}_{w_{1}}$ is Scott open in $\Sigma\Gamma W$, where $$\mathscr{H}_{w_{1}}=\{E\in\Gamma W\mid w_{1}\in E\}=\Gamma W\setminus\dn_{\Gamma W}(W\setminus\up_{W}w_{1}).$$ As $\bigcup\mathscr{C}=W$, $\mathscr{C}\cap\mathscr{H}_{w_{1}}\neq\emptyset$. Since $\mathscr{C}$ is an irreducible closed subset in $\Sigma\Gamma W$, we conclude that $$\overline{(\mathscr{C}\cap\mathscr{H}_{w_{1}})}_{\Sigma\Gamma W}=\mathscr{C}.$$ Define a mapping $f_{w_{1}}:\Gamma W\longrightarrow\Gamma W$ by $f_{w_{1}}(E)=E\cup\dn_{W}w_{1}$. Obviously, $f_{w_{1}}$ is Scott continuous and $\mathscr{C}\cap\mathscr{H}_{w_{1}}\subseteq f_{w_{1}}^{-1}(\mathscr{C})$. So $\mathscr{C}=\overline{(\mathscr{C}\cap\mathscr{H}_{w_{1}})}_{\Sigma\Gamma W}\subseteq f_{w_{1}}^{-1}(\mathscr{C})$. This contradicts with the Claim 3. Therefore, claim 4 is valid.

$\mathbf{Claim} \ 5$: If $W\cap P_{r_{s}}\neq\emptyset$, then $t_{s}\in W$.

Choose a $w_{2}\in W\cap P_{r_{s}}$. By Claim 4, $w_{2}$ is not a compact element in $W$. So there is a directed subset $E_{w_{2}}\subseteq W$ such that $w_{2}\leq\bigvee\limits_{W}E_{w_{2}}$ and $\up_{W}w_{2}\cap E_{w_{2}}=\emptyset$. So we get $\bigvee\limits_{W}E_{w_{2}}\not\in E_{w_{2}}$. By Lemma 4.7, $\bigvee\limits_{W}E_{w_{2}}=\bigvee\limits_{C_{0}}E_{w_{2}}=t_{s}$. As $W$ is Scott open in $S$ and $w_{2}\leq t_{s}$, we have $t_{s}\in W$.

$\mathbf{Claim} \ 6$: Let $\{F_{i}\mid i\in I\}\subseteq\Gamma W$ be a directed subset and $ F=\bigvee\limits_{i\in I}\limits^{\Gamma W}F_{i}$. If $a\in F\cap P_{r_{s}}$ and $t_{s}\not\in F$, then $a\in\bigcup\limits_{i\in I} F_{i}$.

First we assert that $a$ is compact in subdcpo $F$. Let $D_{a}\subseteq F$ be a directed subset and $a\leq \bigvee_{F}D_{a}$. It is not difficult to find that $$\bigvee_{F}D_{a}= \bigvee_{W}D_{a}= \bigvee_{S}D_{a}= \bigvee_{C_{0}}D_{a}.$$ Suppose $\bigvee_{F}D_{a}\not\in D_{a}$. By Lemma 4.7, $\bigvee_{C_{0}}D_{a}=t_{s}\in F$, impossible. Hence, $a$ is compact in subdcpo $F$.

Assume that $a\not\in\bigcup\limits_{i\in I} F_{i}$. Then we have $\bigcup\limits_{i\in I} F_{i}\subseteq F\setminus\up_{F}a$. Furthermore, $F\setminus\up_{F}a$ is Scott closed in $\Sigma W$. Consequently, $F=\bigvee\limits_{i\in I}\limits^{\Gamma W}F_{i}\subseteq F\setminus\up_{F}a$, impossible. Therefore, $a\in\bigcup\limits_{i\in I} F_{i}$.

$\mathbf{Claim} \ 7$: $\{s\in\mathcal{T}_{0}\mid t_{s}\in W\}\neq\emptyset$.

Assume that for each $s\in\mathcal{T}_{0}$, $t_{s}\not\in W$.  Let $D_{W}\subseteq W$ be a directed subset. Surely, $D_{W}$ is still directed in $C_{0}$. If $\bigvee\limits_{W}D_{W}\not\in D_{W}$, then $D_{W}$ does not contain a greatest element in $C_{0}$. So we conclude that $\bigvee\limits_{W}D_{W}=t_{s}$ for some $s\in\mathcal{T}_{0}$, a contradiction.This yields that $D_{W}$ has a largest element. Equivalently, each element of $W$ is compact in $W$. This contradicts with Claim 4, impossible. Thus, Claim 7 is valid.

By Claim 7, we can fix a $t_{s}\in W$. Set $$\mathscr{O}_{s}=\{F\in\mathscr{C}\mid F\cup \dn_{W}t_{s}\not\in\mathscr{C}\}.$$

By Claim 3, $\mathscr{O}_{s}$ is nonempty. Define a mapping $f_{s}:\Gamma W\longrightarrow\Gamma W$ as follows $$\forall\ \widehat{F}\in\Gamma W,\ f_{s}(\widehat{F})=\widehat{F}\cup\dn_{W}t_{s}.$$ Similarly, it is not difficult to verify that $f_{s}$ is Scott continuous. So $f_{s}^{-1}(\mathscr{C})$ is Scott closed in $\Gamma W$. Hence, $\mathscr{\hat{O}}_{s}=\Gamma W\setminus f_{s}^{-1}(\mathscr{C})$ is Scott open in $\Gamma W$. As $\mathscr{C}$ is an irreducible Scott closed in $\Gamma W$ and $\mathscr{O}_{s}\subseteq\mathscr{C}$ is a nonempty Scott open set, we have $\overline{(\mathscr{O}_{s})}_{\Sigma\Gamma W}=\overline{(\mathscr{\hat{O}}_{s}\cap\mathscr{C})}_{\Sigma\Gamma W}=\mathscr{C}$. Clearly, $t_{s}\not\in\bigcap\mathscr{O}_{s}$.

$\mathbf{Claim} \ 8$: If $\widetilde{F}\in\mathscr{O}_{s}$, then $\widetilde{F}\cap P_{r_{s}}=\emptyset$.

Let $\widetilde{F}\in\mathscr{O}_{s}$, or equivalently, $F\in\mathscr{C}$ and $t_{s}\not\in F$. Assume that $x^{\ast}\in\widetilde{F}\cap P_{r_{s}}$. Set $$\mathscr{V}_{x^{\ast}}=\{G\in\mathscr{O}_{s}\mid x^{\ast}\in G\}\ \text{and}\ \ \mathscr{\hat{V}}_{x^{\ast}}=\{\hat{G}\in\Gamma W\mid x^{\ast}\in \hat{G}\}.$$

We assert that $\mathscr{\hat{V}}_{x^{\ast}}$ is Scott open in $\Gamma W$. Clearly, $\mathscr{\hat{V}}_{x^{\ast}}$ is upper in $\Gamma W$. Suppose $\mathscr{D}_{1}\subseteq \Gamma W$ is directed and $\bigvee\limits_{\Gamma W}\mathscr{D}_{1}\in\mathscr{\hat{V}}_{x^{\ast}}$. By Claim 6, $x^{\ast}\in\bigcup\mathscr{D}_{1}$. In other words, $\mathscr{D}_{1}\cap\mathscr{\hat{V}}_{x^{\ast}}\neq\emptyset$. So we get $\mathscr{\hat{V}}_{x^{\ast}}\in\sigma(\Gamma W)$.

Obviously, $\widetilde{F}\in\mathscr{V}_{x^{\ast}}\subseteq\mathscr{O}_{s}\subseteq\mathscr{C}$. By the irreducibility of $\mathscr{C}$ again, we can conclude that $$\overline{(\mathscr{V}_{x^{\ast}})}_{\Sigma\Gamma W}=\overline{(\mathscr{\hat{V}}_{x^{\ast}}\cap\mathscr{O}_{s})}_{\Sigma\Gamma W}=\mathscr{C}.$$
Define the mapping $f_{x^{\ast}}:\Gamma W\longrightarrow\Gamma W$ as follows $$\forall\ \widehat{G}\in\Gamma W,\ f_{x^{\ast}}(\widehat{W})=\widehat{G}\cup\dn_{W}x^{\ast}.$$ Similarly,  $f_{x^{\ast}}$ is Scott continuous. It is immediate that $\mathscr{\hat{V}}_{x^{\ast}}\cap\mathscr{O}_{s}\subseteq f_{x^{\ast}}^{-1}(\mathscr{C})$ and $f_{x^{\ast}}^{-1}(\mathscr{C})\in\Gamma(\Gamma W)$. As a result, $$\mathscr{C}=\overline{(\mathscr{\hat{V}}_{x^{\ast}}\cap\mathscr{O}_{s})}_{\Sigma\Gamma W}\subseteq f_{x^{\ast}}^{-1}(\mathscr{C}).$$ This contradicts with Claim 3. Hence,  $\widetilde{F}\cap P_{r_{s}}=\emptyset$.

For each $t_{s}\in W$, define a mapping $p_{s}:W\longrightarrow W$ as follows
\[
p_{s}(w)=
\begin{cases}
t_{s}, & w\in P_{r_{s}},\\
w, & w\not\in P_{r_{s}}.
\end{cases}
\]

$\mathbf{Claim} \ 9$: $p_{s}$ is Scott continuous.

By Lemma 4.7, $t_{s}$ is an upper bound of $P_{r_{s}}$. So we obtain that $p_{s}$ is order preserving. Let $\tilde{D}\subseteq W$ be a directed subset. Without loss of generality, suppose $\bigvee\limits_{W}\tilde{D}\not\in\tilde{D}$. Then we have $\tilde{D}\subseteq P_{r_{\tilde{s}}}$ for some $\tilde{s}\in\mathcal{T}_{0}$.

Case 1: $\tilde{s}=s$.

In this case, we obtain $\bigvee\limits_{W}\tilde{D}=t_{s}$ and for each $\tilde{d}\in \tilde{D}, p_{s}(\tilde{d})=t_{s}$. So we have
 $$\bigvee\limits^{W}\limits_{\tilde{d}\in \tilde{D}}p_{s}(\tilde{d})=t_{s}=p_{s}(t_{s})= p_{s}(\bigvee\limits_{W}\tilde{D}).$$

Case 2: $\tilde{s}\neq s$.

Then we know $\bigvee\limits_{W}\tilde{D}=t_{\tilde{s}}$ and for each $d_{\star}\in \tilde{D}, p_{s}(d_{\star})=d_{\star}$. Thus

$$\bigvee\limits^{W}\limits_{d_{\star}\in \tilde{D}}p_{s}(\tilde{d})=\bigvee\limits^{W}\limits_{d_{\star}\in \tilde{D}}d_{\star}=\bigvee\limits_{W}\tilde{D}=t_{\tilde{s}}=p_{s}(t_{\tilde{s}})= p_{s}(\bigvee\limits_{W}\tilde{D}).$$

In conclusion, the Scott continuity of $p_{s}$ is obtained.

Define a mapping $c_{s}:\Gamma W\longrightarrow\Gamma W$ by $$\forall\ \hat{F}\in\Gamma W,\  c_{s}(\hat{F})=\overline{p_{s}(\hat{F})}_{\Sigma W}.$$

$\mathbf{Claim} \ 10$: $\mathscr{C}\subseteq c_{s}^{-1}(\mathscr{C})$.

By Claim 9, we deduce that $c_{s}$ is Scott continuous. Furthermore, one can check that
\[
\overline{p_{s}(\hat{F})}_{\Sigma W}=
\begin{cases}
\hat{F}\cup\dn_{W}t_{s}, &  \hat{F}\cap P_{r_{s}}\neq\emptyset,\\
\hat{F}, & \hat{F}\cap P_{r_{s}}=\emptyset.
\end{cases}
\]
 For each $\hat{O}\in\mathscr{O}_{s}$, by Claim 8, $C_{s}(\hat{O})=\hat{O}\in\mathscr{C}$. Equivalently, $\mathscr{O}_{s}\subseteq c_{s}^{-1}(\mathscr{C})$. By the Scott continuity of $c_{s}$, $c_{s}^{-1}(\mathscr{C})$ is Scott closed in $\Gamma W$. This implies $$\mathscr{C}=\overline{(\mathscr{O}_{s})}_{\Sigma\Gamma W}\subseteq c_{s}^{-1}(\mathscr{C}).$$

Set $$\widehat{D}=W\cap\{t_{s}\mid s\in\mathcal{T}_{0}\}\cup\{q_{s,j}\mid s\in\mathcal{T}_{0},j\in\mathbb{N}\},$$ where $q_{s,j}=(m_{s},k,j_{a,b})$.

$\mathbf{Claim} \ 11$: $\Sigma \Gamma \widehat{D}$ is sober.

First, $\widehat{D}$ is a subdcpo of $C_{0}$ because each directed subset of $W$ without a largest element is a chain contained in $\{q_{s,j}\mid j\in\mathbb{N}\}$ for some $s\in\mathcal{T}_{0}$ and $$\bigvee\limits_{W}\{q_{s,j}\mid j\in\mathbb{N}\}=\bigvee\limits_{C_{0}}\{q_{s,j}\mid j\in\mathbb{N}\}=t_{s}\in \widehat{D}.$$

Note that every non principal ideals $I\subseteq \widehat{D}$ is contained in $\{q_{s,j}\mid j\in\mathbb{N}\}$ for some $s\in\mathcal{T}_{0}$. So the collection of all non principal ideals in $\widehat{D}$ is countable. By Lemma 4.4, Lemma 4.5 and Lemma 4.6, $\Sigma \Gamma \widehat{D}$ is sober.

Let $\rho:W\longrightarrow D$ be the mapping defined by
\[
\rho(\tilde{w})=
\begin{cases}
t_{u}, & \exists u\in\mathcal{T}_{0}, \tilde{w}\in P_{r_{u}},\\
\tilde{w}, & \tilde{w}\in D.
\end{cases}
\]
The Scott continuity of $\rho$ follows from the same argument as that for the mapping $p_{s}$. As $D$ is a subdcpo of $W$, the inclusion mapping $\hat{i}:D\longrightarrow W$ is Scott continuous. Besides, $$\rho\circ\hat{i}=id_{D}\ \text{and}\ id_{W}\leq \hat{i}\circ \rho.$$

Let $q:\Gamma W\longrightarrow \Gamma D$ and $e:\Gamma D\longrightarrow \Gamma W$ be the mapping defined by $$q(\widetilde{F})=\overline{\rho(\widetilde{F})}_{\Sigma D}\ \text{and}\ e(\tilde{H})=\overline{\hat{i}(\tilde{H})}_{\Sigma W}=\overline{\tilde{H}}_{\Sigma W}.$$
Using the Scott continuity of $\rho$ and $\hat{i}$, $q$ and $e$ are Scott continuous.

$\mathbf{Claim} \ 12$: $q\circ e=id_{\Gamma D}$ and $id_{\Gamma W}\leq e\circ q$.

For each $\tilde{H}\in\Gamma D$, $$\tilde{H}=\overline{\rho(\tilde{H})}_{\Sigma D}\subseteq q\circ e(\tilde{H})=\overline{\rho(\overline{\tilde{H}}_{\Sigma W})}_{\Sigma D}.$$
Clearly, $H\subseteq \rho^{-1}(H)$ and $\rho^{-1}(H)\in\Gamma W$ because $\rho$ is Scott continuous. This leads to $\overline{\tilde{H}}_{\Sigma W}\subseteq\rho^{-1}(H)$. As $\rho$ is injective, we have $$\rho(\overline{\tilde{H}}_{\Sigma W})\subseteq \rho(\rho^{-1}(\tilde{H}))\subseteq \tilde{H}.$$ Thus $\overline{\rho(\overline{\tilde{H}}_{\Sigma W})}_{\Sigma D}\subseteq \overline{\tilde{H}}_{\Sigma D}=\tilde{H}$ and hence $q\circ e(\tilde{H})=id_{\Gamma D}(\tilde{H})$.

For each $\widetilde{F}\in\Gamma W$ and $b\in \widetilde{F}$, we get $\rho(b)\in e\circ q(\widetilde{F})=\overline{(\overline{(\rho(\widetilde{F}))}_{\Sigma D})}_{\Sigma W}$. As $t_{u}$ is an upper bound of $P_{r_{u}}$, $b\leq\rho(b)$. Since $e\circ q(\widetilde{F})$ is lower in $W$, $b\in e\circ q(\widetilde{F})$. Whence, $\widetilde{F}\subseteq e\circ q(\widetilde{F})$.

$\mathbf{Claim} \ 13$: $e\circ q(\mathscr{C})\subseteq \mathscr{C}$.

For each $L\in\mathscr{C}$, set $$S(L)=\{s\in\mathcal{T}_{0}\mid L\cap P_{r_{s}}\neq\emptyset\}.$$
Let $\widetilde{E}\subseteq S(L)$ be a finite subset. Take $$L_{E}=L\cup(\bigcup\limits_{s\in E}\dn _{W}t_{s}).$$ By claim 10 and mathematical induction, $L_{E}\in\mathscr{C}$. Now, $\mathscr{S}$ is a directed subfamily of $\mathscr{C}$, where $$\mathscr{S}=\{L_{E}\mid E\subseteq S(L)\ \text{ is finite}\}.$$ It follows form $\mathscr{C}\in\Gamma(\Gamma W)$ that $$\bigvee\limits_{\Gamma W}\mathscr{S}\in\mathscr{C}.$$

$\mathbf{Subclaim}$: $\bigvee\limits_{\Gamma W}\mathscr{S}=\overline{\rho(L)}_{\Sigma W}=e\circ q(L)$.

Obviously, $\bigvee\limits_{\Gamma W}\mathscr{S}=\overline{(L\cup\{t_{s}\mid s\in S(L)\})}_{\Sigma W}$. By the definition of the mapping $\rho$, $$\rho(L)=(L\cap D)\cup\{t_{s}\mid s\in S(L)\}.$$ It is on doubt that $\overline{\rho(L)}_{\Sigma W}\subseteq\overline{(L\cup\{t_{s}\mid s\in S(L)\})}_{\Sigma W}$. For each $l\in L$, $l\leq \rho(l)$. As $\overline{\rho(L)}_{\Sigma W}$ is lower, $l\in\overline{\rho(L)}_{\Sigma W}$. This means that $L\subseteq\overline{\rho(L)}_{\Sigma W}$. Thus $$\overline{(L\cup\{t_{s}\mid s\in S(L)\})}_{\Sigma W}=\overline{L}_{\Sigma W}\cup\overline{\{t_{s}\mid s\in S(L)\}}_{\Sigma W}\subseteq\overline{\rho(L)}_{\Sigma W}$$. The first equality sign is valid.

By the definition of $q$ and $e$, $\overline{\rho(L)}_{\Sigma W}\subseteq e\circ q(L)=\overline{(\overline{\rho(L)}_{\Sigma D})}_{\Sigma W}$. As $\hat{i}$ is Scott continuous, $$\hat{i}^{-1}(\overline{\rho(L)}_{\Sigma W})=\overline{\rho(L)}_{\Sigma W}\cap D$$ is Scott closed in $D$. So we can deduce that $\overline{\rho(L)}_{\Sigma D}\subseteq\overline{\rho(L)}_{\Sigma W}\cap D\subseteq\overline{\rho(L)}_{\Sigma W}$. Hence we obtain $\overline{(\overline{\rho(L)}_{\Sigma D})}_{\Sigma W}\subseteq\overline{\rho(L)}_{\Sigma W}$ and therefore $\overline{\rho(L)}_{\Sigma W}=e\circ q(L)$.

By this Subclaim, Claim 13 is proved.

$\mathbf{Claim} \ 14$: $S\in\mathscr{A}$.

By Claim 12 and Claim 13, $q(\mathscr{C})=e^{-1}(\mathscr{C})$. By the Scott continuity of $q$ and $e$, $q(\mathscr{C})$ is an irreducible closed in $\Sigma \Gamma D$. By Claim 11, $q(\mathscr{C})$ contains a largest element $X\in q(\mathscr{C})$. So we have $e(X)\in\mathscr{C}$. For each $\widehat{C}\in\mathscr{C}$, by Claim 12, $$\widehat{C}\subseteq e\circ q(\widehat{C}))\subseteq e(X).$$ This yields that $e(X)$ is the largest element of $\mathscr{C}$. As $\bigcup\mathscr{C}=W$, $e(X)=W\in\mathscr{C}$. It follows form $\mathscr{C}=e_{0}^{-1}(\mathscr{A})$ that $$e_{0}(W)=J\cup W=S\in\mathscr{A}.$$

By Claim 14, $\mathscr{A}$ contains a greatest element $S$. Therefore, $\Sigma\Gamma C_{0}$ is sober.
\end{proof}
\begin{theorem}{\rm Let $P_{1}$  be the dcpo presented in Section 3. Then $\Sigma\Gamma P_{1}$ is sober.}
\end{theorem}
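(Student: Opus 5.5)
We prove the theorem by reducing from $P_{1}$ to the pieces $C_{0}$ already handled in Lemma 4.8, using the transfer results Lemma 4.2 and Lemma 4.3.

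First we compute the set $N(P_{1})$ of suprema of directed sets not attained in the set. In $P_{1}$ a directed set without a largest element is eventually a chain of one of the following types:
\begin{itemize}
\item $\{(h,n)\mid n\in\mathbb{N}\}$ with supremum $\top_{1}$;
\item a chain in some $M$-copy $\Delta\times M$ inside a fibre $\{m\}\times\{n\}\times L$ with supremum $T_{m,n}$;
\item a mixed chain arising from the relations $\sqsubset_{2},\sqsubset_{3},\sqsubset_{4}$, again with supremum some $T_{m,n}$ or $\top_{1}$.
\end{itemize}
So $N(P_{1})\subseteq\{T_{m,n}\mid m,n\in\mathbb{N}\}\cup\{\top_{1}\}$.

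Step 1 removes the top. Take $V=\up\{T_{m,n}\mid m,n\}\cup\{\top_{1}\}$, i.e.\ the set of maximal-type points. One would like $V\in\sigma(P_{1})$, but directed sets converging to $T_{m,n}$ never enter $V$, so $V$ is not Scott open. We therefore run the decomposition the other way.

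Let $\mathscr{A}$ be an irreducible closed set of $\Sigma\Gamma P_{1}$ and put $S=\bigcup\mathscr{A}$, which lies in $\Gamma P_{1}$ by Subclaim 4.1. If $\top_{1}\in S$ then $S=P_{1}$. Arguing as in Subclaims 4.5 and 4.6 (using that $\top_{1}$ is not compact only via the chains $(h,n)$), we aim to show $P_{1}\in\mathscr{A}$. Otherwise $S\subseteq P_{1}\setminus\{\top_{1}\}$.

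Step 2 localises. For $S$ avoiding $\top_{1}$, we use that each directed set without a maximum lives inside a single "fibre" determined by a fixed $k$ (the second coordinate). Chains $(h,n)$ cannot converge below $\top_{1}$, since they reach $\top_{1}$ only. The idea is to mimic the proof of Lemma 4.8 globally. Define $J$ and $W=S\setminus J$ as in Claims 1–4 there, obtaining an irreducible closed $\mathscr{C}\subseteq\Gamma W$ with $\bigcup\mathscr{C}=W$ in which no element of $W$ is compact in $W$.

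Non-compact elements of $P_{1}$ below $\top_{1}$ are exactly those lying under some $T_{m,n}$ via a non-attained directed supremum. By the structure of $\sqsubset_{4}$, these come in families indexed by $(a,b)$, $r_{0}$ and $k$, each of which is a copy of $C_{0}$ (together with the $\sqsubset_{2},\sqsubset_{3}$ links). We then apply the retraction technique of Claims 9–13 simultaneously for all these families. The map $\rho$ collapses every $P_{r_{s}}$-type element to its top $t_{s}$, and the elements $(h,n)$ are compact and left fixed. This reduces to a subdcpo $\widehat{D}\subseteq P_{1}$ whose non-principal ideals are just the chains $\{q_{s,j}\}_{j}$ over all parameters, which are countably many. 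Lemmas 4.4–4.6 then give sobriety of $\Sigma\Gamma\widehat{D}$. Pulling back the largest element as in Claim 14 shows $W\in\mathscr{C}$, hence $S\in\mathscr{A}$.

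The main obstacle is Step 2's countability and continuity bookkeeping. The ideals of the form $\{(h,n)\}_{n}$ are uncountably many (one per $h\in\mathbb{N}^{\mathbb{N}}$), so they must be excluded before invoking Lemma 4.4. We would handle this by first treating $\top_{1}$ separately: once $\top_{1}\notin S$, those chains have no supremum inside $S$ and their elements are compact in $S$, so they are absorbed into $J$ by the argument of Claim 4. The second delicate point is verifying that the global $\rho$ is Scott continuous across the $\sqsubset_{2},\sqsubset_{3},\sqsubset_{4}$ links. For this we would use Lemma 4.7 fibrewise, which says that every $P_{r_{s}}$ element has a unique non-attained supremum $t_{s}$ above it.
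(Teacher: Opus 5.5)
There is a genuine gap in Step 2. The retraction machinery of Lemma 4.8 (Claims 5--13) does not globalise. Lemma 4.7 is a statement about one copy of $C_0$ inside a single level, and it fails in $P_1$ once the cross-level relations $\sqsubset_2,\sqsubset_3$ are present.

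Take $(c,d)\in\Delta$, $w\in\mathbb{N}^{<\omega}$ and $x=(c,n,w_{c,d})$. This is a $P_{r_s}$-type point of its level-$n$ copy, since it is not on a $\sqsubset_4$-chain. Yet by $\sqsubset_1$, $\sqsubset_2$ and $\sqsubset_1;\sqsubset_2$ it lies below $T_{c,n}$ and below $T_{f_{c,d}(w'),n+1}$ for every $w'\succeq w$. Each of these tops is the non-attained supremum of a chain in its own fibre. Consequences:
\begin{itemize}
\item Your global $\rho$ is not even monotone. We have $x\le T_{f_{c,d}(w),n+1}$, while $\rho(x)=T_{c,n}$ and $\rho(T_{f_{c,d}(w),n+1})=T_{f_{c,d}(w),n+1}$ are distinct maximal points of $B$.
\item The analogues of Claims 6 and 8 fail for the same reason: $x$ is not compact in $\dn T_{f_{c,d}(w),n+1}$, which omits $T_{c,n}$.
\item Nothing in Claims 1--4 (Scott openness of $W$, absence of compact points) prevents $W$ from containing all three points.
\item If instead you leave such points uncollapsed to rescue monotonicity, $\widehat D$ contains the $\mathbb{N}^{<\omega}$-trees of these fibres. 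Their infinite branches give continuum many non-principal ideals, so Lemma 4.4 no longer applies.
\end{itemize}
Separately, the points $(h,n)$ are in general not compact in $S$. If $T_{h(n'),n'}\in S$ for some $n'\ge n$, it is the supremum of a fibre chain none of whose members lies above $(h,n)$. So these points cannot simply be absorbed into $J$.

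What is missing is the paper's way of decoupling the levels. Your Step 1 is immediate: if $\top_1\in\bigcup\mathscr{A}$, some member of $\mathscr{A}$ is a lower set containing $\top_1$, hence equals $P_1$. The real work is the case $\top_1\notin U=\bigcup\mathscr{A}$, which the paper handles in three moves.
\begin{itemize}
\item The set $V=U\cap B$ is Scott open in $U$ and contains $N(U)$, so Lemma 4.2 disposes of the $(h,n)$-part.
\item The chains $(h,\cdot)$ are then used positively. If the levels of tops in $U$ were unbounded, a diagonal $f\in\mathbb{N}^{\mathbb{N}}$ would put $\{(f,r)\mid r\in\mathbb{N}\}$ into $U$ and force $\top_1\in U$. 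So some $n_0$ bounds the levels, and Lemma 4.2 again reduces to $V_0=V\cap B_{\le n_0}$.
\item For an irreducible closed family in $\Sigma\Gamma V_0$ with union $S$, one removes at each level $k$ everything in $\overline{T_{>k}}$. The sets $O_k=(S\setminus\overline{T_{>k}})\cap L_k$ are Scott open in $S$. Their union $Y$ has no cross-level comparabilities, which is precisely what eliminates the $\sqsubset_2,\sqsubset_3$ links. Moreover $Y$ is dense in $S$: a nonempty open set meets $O_p$ for the largest level $p$ it meets, and this needs finitely many levels.
\end{itemize}
Then $Y$ is an order-disjoint sum of locally closed pieces of copies of $C_0$ and $H_0$, hence isomorphic to a locally closed subset of $C_0$. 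Lemmas 4.3 and 4.8 therefore make $\Sigma\Gamma Y$ sober. The retraction pair $R\mapsto R\cap Y$, $K\mapsto\overline{K}$ transfers this back, giving $S$ as the top of the family.
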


\begin{proof} (1) Let $\mathscr{F}\subseteq\Gamma P_{1}$ be an irreducible Scott closed sets in $\Sigma\Gamma P_{1}$. Set $U=\bigcup\mathscr{F}$. By the proof of Subclaim 4.1 in the Lemma 4.2, $U\in\Gamma P_{1}$. We just consider the following 3 cases.

Case 1: $U=\emptyset$.

Then we can deduce that $\mathscr{F}=\{\emptyset\}$.

Case 2: $U=P_{1}$.

Choose a $F\in\mathscr{F}$ with $\top_{1}\in F$. As $F$ is lower, $F=P_{1}$. Hence, $\mathscr{F}=\Gamma P_{1}$.

Case 3: $U\neq\emptyset$ and $U\neq P_{1}$.

Let $V=U\cap B$, where $B$ is the subposet of the dcpo $P_{1}$.

$\mathbf{Claim} \ 1$: $\Sigma \Gamma(U)$ sober if and only if $\Sigma \Gamma(V)$ is sober.

We first check that the following 3 statements hold:

(i) $U$ is subdcpo of $P_{1}$;

(ii) $V$ is Scott open in subposet $U$;

(iii) $N(U)\subseteq V$.

The statements $({\rm i})$ follows by $U\in\Gamma P_{1}$. Clearly, $V$ is lower in the subposet $U$. Let $D\subseteq U$ be a directed subset with $s=\bigvee D$. Assume that $D\cap V=\emptyset$. Then we obtain $$D\subseteq \mathbb{N}^{\mathbb{N}}\times\mathbb{N}.$$ Further, $$D\subseteq\{(f,n)\mid f\in\mathbb{N}^{\mathbb{N}},n\in\mathbb{N}\}$$ because $D$ is directed. If $D$ is finite, then $s$ is the largest element of $D$. Surely, $s\in D\cap V$, impossible. If $D$ is infinite, by the order of $P_{1}$, we have $s=\top_{1}\not\in U$, impossible. Consequently, $D\cap V\neq\emptyset$ and then $V\in\sigma(U)$. The statement $({\rm ii})$ is valid.

Let $D^{\ast}\subseteq U$ be a directed subset with $\bigvee D^{\ast}\not\in D^{\ast}$. As $\top_{1}\in U$ and $U\in\Gamma P_{1}$, $\bigvee D^{\ast}\in U$. Assume that $\bigvee D^{\ast}\in\mathbb{N}^{\mathbb{N}}\times\mathbb{N}$. Similarly, we can deduce that $D^{\ast}$ is finite and further contains a greatest element, impossible. As a consequence, $\bigvee D^{\ast}\in B$. The statement $({\rm iii})$ holds. By Lemma 4.2 and 3 statements, Claim 1 is proved.

Set $$L_{k}=\mathbb{N}\times\{k\}\times L \ \text{and}\ B_{\leq k}=\bigcup\limits_{l=0}\limits^{l=k}L_{k},$$ where $L$ is the subset presented in the dcpo $P_{1}$. The symbols $T_{m,k}$ denotes the maximal element $(m,k,\top)$ in the subposet $B$ of $P_{1}$.

$\mathbf{Claim} \ 2$: There exists a $n_{0}\in\mathbb{N}$ such that $$\{k\mid \exists m\in\mathbb{N},T(m,k)\in U\}\subseteq\{0,1,\cdot\cdot\cdot,n_{0}\}.$$

Set $$K=\{k\in\mathbb{N}\mid \exists m\in\mathbb{N}, T(m,k)\in U\}.$$

Assume that such $n_{0}$ does not exist. In other words, $K$ is  is unbounded. Then for each $r\in\mathbb{N}$, there is a smallest $k_{r}$ with $r<k_{r}$ such that $$\{m\in\mathbb{N}\mid T(m,k_{r})\in U\}\neq\emptyset.$$ For each $r\in \mathbb{N}$, let $$m_{k_{r}}={\rm min}\{m\in\mathbb{N}\mid T(m,k_{r})\in U\}.$$
Define a mapping $f:\mathbb{N}\longrightarrow\mathbb{N}$ by

\[
f(k)=
\begin{cases}
 m_{k_{r}}, & k\in\{k_{r}\mid r\in\mathbb{N}\},\\
 0, & otherwise.
\end{cases}
\]
Now, we have $$(f,r)<(f,k_{r})< T(f(k_{r}),k_{r})=T(m_{k_{r}},k_{r})\in U$$ in $P_{1}$. Since $U$ is Scott closed, we get $(f,r)\in U$. The arbitrariness of $r$ gives $$D_{f}=\{(f,r)\mid r\in \mathbb{N}\}\subseteq U.$$ As $U$ is Scott closed and $D_{f}$ is a chain, we have $\bigvee\limits_{P_{1}}D_{f}=\top_{1}\in U$, impossible. Consequently, $K$ is bounded and then such $n_{0}$ exists.

Let $$V_{0}=V\cap(\cup\{L_{k}\mid 0\leq k\leq n_{0}\}).$$

$\mathbf{Claim} \ 3$:  $\Sigma \Gamma V$ sober if and only if $\Sigma \Gamma V_{0}$ is sober.

Similar, we have the following 3 conclusions:

(I) $V$ is a subdcpo of $U$;

(II) $V_{0}$ is Scott open in the subposet $V$;

(III) $N(V)\subseteq V_{0}$.

Since $U$ is Scott closed in $P_{1}$ and $\top_{1}\not\in U$, the statement (I) holds.

Let $(m^{\ast}_{1},k^{\ast}_{1},z_{1})\in V_{0}$ and $(m^{\ast}_{1},k^{\ast}_{1},z_{1})\leq (m^{\ast}_{2},k^{\ast}_{2},z_{2})$ with $(m^{\ast}_{2},k^{\ast}_{2},z_{2})\in V$. Assume that $(m^{\ast}_{2},k^{\ast}_{2},z_{2})\not\in V_{0}$. Then we have $k^{\ast}_{1}\leq n_{0}<k^{\ast}_{2}$. By the order on $B$, $z_{2}=\top$. Whence $$(m^{\ast}_{2},k^{\ast}_{2},z_{2})=T_{m^{\ast}_{2},k^{\ast}_{2}}\in V\subseteq U.$$ By Claim 2, $k^{\ast}_{2}<n_{0}$, a contradiction. This yields that $V_{0}$ is upper in $V$. Suppose $D^{\star}\subseteq V$ is directed such that $\bigvee\limits_{V}D^{\star}\in V_{0}$. Choose a $(m_{d},k_{d},z_{d})\in D^{\star}$ and write $\bigvee\limits_{V}D^{\star}$ as $(m_{v},k_{v},z_{v})$. By $\bigvee\limits_{V}D^{\star}\in V_{0}$, $k_{v}\leq n_{0}$. It follows form $(m_{d},k_{d},z_{d})\leq\bigvee\limits_{V}D^{\star}=(m_{v},k_{v},z_{v})$ that $k_{d}\leq k_{v}\leq n_{0}$. Consequently, $(m_{d},k_{d},z_{d})\in V_{0}\cap D^{\star}$. Thus, the conclusion ${\rm (II)}$ is proved.

Take a directed subset $\widehat{D}\subseteq V$ with $\bigvee\limits_{V}\widehat{D}\not\in\widehat{D}$. Since $U$ is Scott closed subdcpo of $P_{1}$ and $\widehat{D}\subseteq V\subseteq U$, we obtain $\bigvee\limits_{V}\widehat{D}\in U$. Furthermore, by statement ${\rm (iii)}$ in Claim 2, $\bigvee\limits_{V}\widehat{D}=\bigvee\limits_{U}\widehat{D}\in V$. Then we find that $\widehat{D}\subseteq B$ is directed without a largest element. By the order on $B$, $\bigvee\limits_{V}\widehat{D}$ is a maximal element of $B$. So we assert that $\bigvee\limits_{V}\widehat{D}=T_{\widehat{m},\widehat{k}}\in U$. By Claim 2, $\widehat{k}\leq n_{0}$. As a result, $\bigvee\limits_{V}\widehat{D}\in V_{0}$ and then the assertion ${\rm (III)}$ is valid.

By Lemma 4.2, Claim 3 holds.

$\mathbf{Claim} \ 4$: $\Sigma \Gamma V_{0}$ is sober.

Let $\mathscr{A}\subseteq\Gamma V_{0}$ be an irreducible closed in $\Sigma \Gamma V_{0}$. Set $$S=\bigcup\mathscr{A}.$$ By the proof of Subclaim 4.1 in Lemma 4.2, $S\in\Gamma V_{0}$. So we conclude that $\mathscr{A}$ is an irreducible closed set in $\Sigma \Gamma S$. Next, we assert that $S\in\mathscr{A}$. Without loss of generality, suppose $S\neq\emptyset$.

For each $0\leq k< n_{0}$, let $$T_{>k}=S\cap(\bigcup\limits_{l=k+1}\limits^{n_{0}}L_{l})\ ,\ R_{k}=S\setminus\overline{(T_{>k})}_{\Sigma S}\ \text{and}\ O_{k}=R_{k}\cap L_{k}.$$  Then $R_{k}\subseteq B_{\leq k}$ since $T_{>k}\cap R_{k}=\emptyset$.

$\mathbf{Subclaim} \ 4.1$: $O_{k}$ is Scott open in $S$.

Let $\widetilde{S}=S\cap B_{\leq k-1}$. Clearly, $\widetilde{S}$ is lower in $S$. Suppose $\widetilde{D}$ is directed in $S\cap B_{\leq k-1}$. We just consider the case $\bigvee\limits_{S}\widetilde{D}\not\in\widetilde{D}$. This leads to $\widetilde{D}\subseteq L_{k^{\ast}}$ for some $k^{\ast}\leq k-1$ and further $\bigvee\limits_{S}\widetilde{D}={\rm max}(L_{k^{\ast}})\in B_{\leq k-1}$.  As $S$ is Scott closed in $V_{0}$, we have $$\bigvee\limits_{V_{0}}\widetilde{D}=\bigvee\limits_{S}\widetilde{D}\in S.$$ As a consequence, $\widetilde{S}$ is Scott closed in $S$. Note that $$O_{k}=(S\setminus\overline{(T_{>k})}_{\Sigma S})\cap(S\setminus\widetilde{S}).$$ Thus $O_{k}\in\sigma(S)$.

Set $$Y=\bigcup\limits_{k=0}\limits^{n_{0}}O_{k}.$$

$\mathbf{Subclaim} \ 4.2$: $Y$ is order isomorphic to $\coprod\limits_{k=0}\limits^{n_{0}}O_{k}$.

We just consider the situation that there are $x\in O_{k}$, $y\in O_{r}$ ($k\neq r$) such that $x\not\in{\rm max}(L^{k}), y\in {\rm max}(L^{r})$ and $x\leq y$. By the order $\sqsubset_2$, $\sqsubset_3$ and $\sqsubset_4$, we obtain $k<r$. It follows that $y\in T_{>k}$. As $\overline{(T_{>k})}_{\Sigma S}$ is lower in $S$, $x\in\overline{(T_{>k})}_{\Sigma S}$. However $x\in O_{k}$ implies $x\not\in\overline{(T_{>k})}_{\Sigma S}$, impossible. Therefore, $Y\cong\coprod\limits_{k=0}\limits^{n_{0}}O_{k}$.

$\mathbf{Subclaim} \ 4.3$: $\overline{Y}_{\Sigma S}=S$.

Let $G\in\sigma(S)$ be a nonempty Scott open set. Fix $$p={\rm max}\{q\mid G\cap L_{q}\neq\emptyset, 0\leq q\leq n_{0}\}.$$
Then $G\cap L_{p}\neq\emptyset$ and $G\cap T_{>p}=\emptyset$. As $G\in\sigma(S)$ and $T_{>p}\subseteq S\setminus G$, $$\overline{(T_{>p})}_{\Sigma S}\subseteq S\setminus G.$$ Equivalently, $G\subseteq R_{p}$. So we have $G\cap O_{p}=G\cap L_{p}\neq\emptyset$. Consequently, $G\cap Y\neq\emptyset$ and therefore $\overline{Y}_{\Sigma S}=S$.

$\mathbf{Subclaim} \ 4.4$: $Y$ is order isomorphic to $Z=F^{\sharp}\cap U^{\sharp}$ for some $F^{\sharp}\in\Gamma C_{0}, U^{\sharp}\in\sigma(C_{0})$, where $C_{0}$ is the subdcpo present in Lemma 4.8.

This statement obtained by proving the following facts.

$\mathbf{Fact} \ 1$: Each $O_{k}$ is the intersect of a Scott open set and a Scott closed set in $L_{k}$.

As $S$ is Scott closed in $B_{\leq n_{0}}$, $\sigma (S)$ is the subspace topology of $S$ in $\Sigma B_{\leq n_{0}}$. By Subclaim 4.3, there are $\Omega_{k}\in\sigma(B_{\leq n_{0}})$ such that $O_{k}=S\cap\Omega_{k}$. Set $$F_{k}=S\cap L_{k}\ \text{and}\ G_{k}=\Omega_{k}\cap L_{k}.$$ It follows from $O_{k}\subseteq L_{k}$ that $O_{k}=F_{k}\cap G_{k}$. Clearly, the inclusion mapping $i:L_{k}\longrightarrow B_{\leq n_{0}}$ is Scott continuous as $L_{k}$ is a subdcpo of $B_{\leq n_{0}}$. So $i^{-1}(S)=F_{k}\in\Gamma L_{k}$ and $i^{-1}(\Omega_{k})=G_{k}\in\sigma (L _{k})$. Hence Fact 1 holds.

Fix $\delta\in\triangle, r\in\mathbb{N}$ and $0\leq k\leq n_{0}$, define $$C_{k,\delta,r}=\bigcup\limits_{s\in\mathcal{T}_{0}}(\{f_{\delta}(r\cdot s)\}\times\{k\}\times L).$$ Take $$M_{0}=\mathbb{N}\setminus\bigcup\limits_{\delta\in\triangle} Im f_{\delta}.$$
For each $m\in M_{0}$, set $$\ I_{k,m}=\{m\}\times \{k\}\times L.$$
By the order on $B$, one can check that $$L_{k}\cong(\coprod\limits_{\delta\in\triangle,r\in\mathbb{N}}C_{k,\delta,r})\coprod(\coprod\limits_{m\in M_{0}}I_{k,m}).$$

Fix $r_{0}\in\mathbb{N},k_{0}\in\mathbb{N},\widetilde{\delta}=(a_{0},b_{0})\in\triangle$, suppose $$C_{0}=\{(m_{s},k_{0},z)\mid s\in\mathcal{T}_{0},z\in L\},$$ where $m_{s}=f_{\widetilde{\delta}}(r_{0}\cdot s)$.

Set $$H_{0}=\{f_{\widetilde{\delta}}(r_{0}\cdot\varepsilon)\}\times\{k_{0}\}\times L,$$ where $\varepsilon$ is the empty word (or equivalently $r_{0}\cdot\varepsilon=r_{0}$). Then $H_{0}=\dn_{C_{0}}(f_{\widetilde{\delta}}(r_{0}\cdot\varepsilon),k_{0},\top)$ is Scott closed in $C_{0}$.

$\mathbf{Fact} \ 2$: $C_{k,\delta,r}\cong C_{0}$ and $I_{k,m}\cong H_{0}$.

Let $\pi:\triangle\longrightarrow\bigtriangleup$ be the mapping defined by
\[
\pi(\gamma)=
\begin{cases}
\delta, & \gamma=\delta,\\
\widetilde{\delta}, & \gamma=\widetilde{\delta},\\
\gamma, & \gamma\not\in\{\delta,\widetilde{\delta}\}.
\end{cases}
\]
Note that $L=(\triangle\times M)\cup\{\top\}$. For each $\gamma\in\triangle$ and $u\in M$, we write $u_{\gamma}$ for $(u,\gamma)$. Define a mapping $\widehat{\pi}:L\longrightarrow L$ by
\[
\widehat{\pi}(l)=
\begin{cases}
\top, & l=\top,\\
u_{\pi(\gamma)}, & l=u_{\gamma}.
\end{cases}
\]
Then we can check that $\widehat{\pi}$ is an order isomorphism. Now, we give the order isomorphism $\theta_{\delta,k,r}$ between $C_{k,\delta,r}$ and $C_{0}$. Specifically, $\theta_{\delta,k,r}$ sends each $(f_{\delta}(r\cdot s),k,z)$ to $(f_{\widetilde{\delta}}(r_{0}\cdot s),k_{0},\widehat{\pi}(z))$.

Define the mapping $\theta_{k,m}:I_{k,m}\longrightarrow H_{0}$ by $$\theta_{k,m}(m,k,z)=(f_{\widetilde{\delta}}(r_{0}\cdot\varepsilon),k_{0},z).$$ By the order on $B$, $\theta_{k,m}$ forms an order isomorphism. Fact 2 is proved.

For each $0\leq k\leq n_{0},$ let $$\mathscr{C}_{k}=\{C_{k,\delta,r}\mid \delta\in\triangle,r\in\mathbb{N}\}\cup\{I_{k,m}\mid m\in M_{0}\}$$ and $$I=\{(k,C)\mid 0\leq k\leq n_{0},C\in\mathscr{C}_{k},O_{k}\cap C\neq\emptyset\}.$$ For each $i=(k,C)\in I$, we set $k(i)=k$
\[
C_{i}=
\begin{cases}
C_{k(i),\delta_{i},r_{i}}, & C=C_{k(i),\delta_{i},r_{i}}\\
I_{k(i),m_{i}}, & C=I_{k(i),m_{i}}
\end{cases}
\]
and further take $Y_{i}=O_{k(i)}\cap C_{i}$. Subclaim 4.3 and the order-disjoint decomposition of $L_{k}$ give that $$Y\cong\coprod\limits_{i\in I}Y_{i}.$$ According to Fact 1, $O_{k}=F_{k}\cap G_{k}$ and $F_{k}\in \Gamma L_{k},G_{k}\in \sigma(L_{k})$. For each $i\in I$, we have $$Y_{i}=O_{k_{i}}\cap C_{i}=(F_{k_{i}}\cap G_{k_{i}})\cap C_{i}=(F_{k_{i}}\cap C_{i})\cap(G_{k_{i}}\cap C_{i}).$$ Furthermore, we can conclude that $F_{k_{i}}\cap C_{i}\in\Gamma C_{i}$ and $G_{k_{i}}\cap C_{i}\in\sigma(C_{i})$. For each index $i\in I$, fix an order isomorphism
$$
\theta_i:C_i\longrightarrow H_i,
\qquad
H_i\in\{C_0,H_0\},
$$
where
\[
\theta_{i}=
\begin{cases}
\theta_{k(i),\delta_{i},r_{i}}, & C=C_{k(i),\delta_{i},r_{i}}\\
\theta_{k(i),m_{i}}, & C=I_{k(i),m_{i}}.
\end{cases}
\]

Clearly, $
H_i\in\Gamma C_0$.

Define
$$ A_i=\theta_i(Y_i), B_i=\theta_i\bigl(F_{k(i)}\cap C_i\bigr),\ \text{and}\ W_i=\theta_i\bigl(G_{k(i)}\cap C_i\bigr).$$
Since an order isomorphism is a homeomorphism for the Scott
topologies, these definitions give
$$
B_i\in\Gamma H_i,
\qquad
W_i\in\sigma(H_i).
$$
Moreover, injectivity of the isomorphism imply $A_i=B_i\cap W_i$.
Put
$$
U_i=C_0\setminus(H_i\setminus W_i).
$$
Then $H_i\setminus W_i\in\Gamma H_i$ and further $H_i\setminus W_i\in\Gamma C_0, B_i\in\Gamma C_0$ as $H_i\in\Gamma C_0$. Consequently, $U_i\in\sigma(C_0)$.
Furthermore, $H_i\cap U_i=W_i,\ \text{and}\ B_i\subseteq H_i$.
Combining these observations with $A_i=B_i\cap W_i$, we obtain
$$
A_i=B_i\cap U_i, B_i\in\Gamma C_0\ \text{and}\
U_i\in\sigma(C_0).
$$
Let $$
\mathcal T_{0}=\bigcup_{\ell\geq 0}\mathbb N^\ell,
\qquad
\mathbb N^0=\{\varepsilon\}.$$
For each $s\in\mathcal T$ and $z\in L$, define
$$[s,z]=\bigl(f_{\widetilde{\delta}}((r_0)\cdot s),k_0,z\bigr),$$
and write the local tops as $t_s=[s,\top]=(f_{\widetilde{\delta}}((r_0)\cdot s),k_0,\top)$.

For each natural number $j$, define
$$Q_j=\bigl\{[(j)\cdot s,z]\mid s\in\mathcal T,\ z\in L\bigr\}$$ and $ h_j:C_0\longrightarrow Q_j$ by $ h_j([s,z])=[(j)\cdot s,z]$. One can check that this map $h_j$ is an order isomorphism.

$\mathbf{Fact} \ 3$: $Q_{j}$ is Scott open in $C_{0}$.

First, $Q_{j}$ is an upper set. Let
$D_{1}\subseteq C_0$
be a nonempty directed subset such that $\bigvee\limits_{C_0}D_{1}\in Q_j$.
It suffices to consider the condition $\bigvee\limits_{C_0}D_{1}\not\in D_{1}$. In this case, $\bigvee\limits_{C_0}D_{1}$ is a maximal element in $C_{0}$. So we have $\bigvee\limits_{C_0}D_{1}=(f_{\widetilde{\delta}}((r_0j)\cdot \tilde{s}),k_0,\top)\in Q_{j}$ for some $\tilde{s}\in\mathcal{T}_{0}$. Since $Q_{j}$ is lower in $C_{0}$, $Q_{j}$ is contained in $C_{0}$. Hence, $Q_{j}\in\sigma(C_{0})$.

Different copies $Q_{j}$ and $Q_{j^{'}}$ are disjoint and have no comparable points, or equivalently,
$$
j\neq j',
\quad
x\in Q_j,
\quad
y\in Q_{j'}
\quad\Longrightarrow\quad
x\nleq y
\ \text{and}\
y\nleq x.
\eqno
$$
Indeed, every comparison between points with nonempty
addresses preserves the first letter of the address.

If the index set is empty, then the original space is empty
and is already isomorphic to a locally closed sub-dcpo of
the core. We may therefore assume that the index set is
nonempty.

Since the set $I$ is at most countable, we can fix an injection $\nu:I\longrightarrow\mathbb N$.
Define
$$
Z_i=h_{\nu(i)}(A_i),
\qquad
Z=\bigcup_{i\in I}Z_i.$$
Then we obverse that $Z_i\subseteq Q_{\nu(i)}$. Thus they are pairwise disjoint and pairwise incomparable
as different copies $Q_{j}$ and $Q_{j^{'}}$ are disjoint and have no comparable points.

$\mathbf{Fact} \ 4$: $Y\cong Z$.

Define a mapping $\Phi:Y\longrightarrow Z$ by
$$\Phi(x)=h_{\nu(i)}\bigl(\theta_i(x)\bigr),\ \mbox{for}\ x\in Y_{i}.$$
Its inverse is given on each image piece by
$$
\Phi^{-1}(z)
=
\theta_i^{-1}\bigl(h_{\nu(i)}^{-1}(z)\bigr),
\qquad
z\in Z_i.
$$

For two points in the same piece, preservation and reflection
of order follow from the two component isomorphisms.
For points in different pieces, neither pair is comparable.
Consequently,
$$
x\leq_Y y
\quad\Longleftrightarrow\quad
\Phi(x)\leq_{C_0}\Phi(y),
\qquad
x,y\in Y.$$
Thus the mapping $\Phi$ is an order isomorphic.

$\mathbf{Fact} \ 5$: $Z=F^{\sharp}\cap U^{\sharp}$ for some $F^{\sharp}\in\Gamma C_{0}$ and $ U^{\sharp}\in\sigma(C_{0})$.

Define
$$
\widetilde F_i=h_{\nu(i)}(B_i),
\qquad
\widetilde U_i=h_{\nu(i)}(U_i).
$$
Then we have $\widetilde F_i\in\Gamma Q_{\nu(i)}$ and $\widetilde U_i\in\sigma(Q_{\nu(i)})$.
Furthermore,
$
Z_i
=
h_{\nu(i)}(B_i\cap U_i)
=
\widetilde F_i\cap\widetilde U_i$.

Now put
$$Q=\bigcup_{i\in I}Q_{\nu(i)},\ F=\bigcup_{i\in I}\widetilde F_i\ \text{and}\ U^{\sharp}=\bigcup_{i\in I}\widetilde U_i.$$

By Fact 3, we assert $Q\in\sigma(C_0)$. It follows from $\widetilde U_i\in\sigma(Q_{\nu(i)})$ and $Q_{\nu(i)}\in\sigma(C_0)$ that $\widetilde U_i\in\sigma(C_0)$.
for every index $i\in I$. Hence $U^{\sharp}\in\sigma(C_0)$.
Notice also that $F\subseteq Q$ and $U\subseteq Q$. Besides, $$Q\setminus F=\bigcup\limits_{i\in I}\bigl(Q_{\nu(i)}\setminus\widetilde F_i\bigr)$$ and each $Q_{\nu(i)}\setminus\widetilde F_i\in\sigma(Q_{\nu(i)})\subseteq\sigma(Q)$. So we know $Q\setminus F\in\sigma(Q)$ and therefore $F\in\Gamma Q$.

Furthermore, terms belonging to different copies have
empty intersection:
$$
\widetilde F_i\cap\widetilde U_{i'}
=
\varnothing
,\
\text{whenever }i\neq i'.
$$
Using the above relations, we obtain
$$
F\cap U^{\sharp}
=
(\bigcup_{i\in I}\widetilde F_i)
\cap
(\bigcup_{i'\in I}\widetilde U_{i'})
=
\bigcup_{i,i'\in I}
\bigl(\widetilde F_i\cap\widetilde U_{i'}\bigr)
=
\bigcup_{i\in I}
\bigl(\widetilde F_i\cap\widetilde U_i\bigr)
=
\bigcup_{i\in I}Z_i
=Z.
$$

Take $F^\sharp=
\overline{F}_{\Sigma C_0}$. As $Q\in\sigma(C_{0})$, we obtain
$
F^\sharp\cap Q
=
\overline{F}_{\Sigma Q}=F$.
Now, we have
$$
F^\sharp \cap U^{\sharp}
=
(F^\sharp\cap Q)\cap U^{\sharp}
=
F\cap U^{\sharp}
=
Z.
$$
Consequently,
$$
Z=F^\sharp\cap U^{\sharp},
\qquad
F^\sharp\in\Gamma C_0,
\qquad
U^{\sharp}\in\sigma(C_0).$$
Therefore, Subclaim 4.4 is proved.

$\mathbf{Subclaim} \ 4.5$: $S\in\mathscr{A}$.

By Lemma 4.3, Lemma 4.8 and Subclaim 4.4, we have $\Sigma\Gamma Y$ is sober. Define tow mappings $r:\Gamma S\longrightarrow\Gamma Y$ and $e:\Gamma Y\longrightarrow\Gamma S$ by $$r(R)=R\cap Y\ \text{and}\ e(K)=\overline{K}_{\Sigma S}.$$ By Subclaim 4.1 and the proof of Claim 1 and Claim 2 in Lemma 4.2, $r,e$ are Scott continuous and further $r\circ e=id_{\Gamma Y},e\circ r\leq id_{\Gamma S}$. So $\mathscr{B}=\overline{r(\mathscr{A})}_{\Sigma\Gamma Y}$ is an irreducible closed set in $\Sigma\Gamma Y$. As $\Sigma\Gamma Y$ is sober, $\mathscr{B}=\dn_{\Gamma Y} Y^{\ast}$ for some $Y^{\ast}\in \Gamma Y$. So $$Y=Y\cap S=\bigcup r(\mathscr{A})\subseteq Y^{\ast}$$ and therefore $Y=Y^{\ast}\in\mathscr{B}$. It follows form $e\circ r\leq id_{\Gamma S}$ that $e\circ r(\mathscr{A})\subseteq\mathscr{A}$, or equivalently, $r(\mathscr{A})\subseteq e^{-1}(\mathscr{A})\in\Gamma(\Gamma Y)$. This leads to $$Y\in\mathscr{B}\subseteq e^{-1}(\mathscr{A}).$$ In other words, $e(Y)\in\mathscr{A}$ and hence by Subclaim 4.3, $e(Y)=\overline{Y}_{\Sigma S}=S\in\mathscr{A}$.

Therefore, $\Sigma \Gamma V_{0}$ is sober.

$\mathbf{Claim} \ 5$: $U\in\mathscr{F}$.

By Claim 2 and Claim 3, $\Sigma \Gamma U$ is sober. Since $\mathscr{F}$ is an irreducible closed set in $\Sigma\Gamma P_{1}$ and $U\in\Gamma P_{1}$, $\mathscr{F}$ is also an irreducible closed set in $\Sigma\Gamma U$. The sobriety of $\Sigma\Gamma U$ implies $\mathscr{F}=\dn_{\Gamma U}U^{\ast}$ for some $U^{\ast}\in\Gamma U$. As a consequence, $$\bigcup\mathscr{F}=\bigcup\dn_{\Gamma U}U^{\ast}=U^{\ast}=U\in\mathscr{F}.$$

Thus, $\Sigma\Gamma P_{1}$ is sober.

\end{proof}

\begin{lemma}
{\rm Let $B_{\leq n}=\bigcup\limits_{k=0}\limits^{n}L_k,$ and $W\in\Gamma B_{\leq n}$.
Then $U_W=\downarrow_{P_1}W\in\Gamma P_1$ and $W\in\sigma(U_W)$. In particular, $\Sigma\Gamma W$ is sober.}

\end{lemma}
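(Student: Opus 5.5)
We will prove the two structural assertions directly from the order on $P_1$ and then obtain sobriety of $\Sigma\Gamma W$ from Theorem 4.9 together with Lemma 4.3.

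\textbf{Step 1: $U_W=\dn_{P_1}W\in\Gamma P_1$.} Clearly $U_W$ is lower. Let $D\subseteq U_W$ be directed with no largest element; we show $\bigvee_{P_1}D\in U_W$. By the order on $P_1$, a directed set without a largest element has one of two forms.
\begin{enumerate}
\item It is eventually a chain $\{(h,r)\mid r\in R\}$ with $R$ infinite, so its supremum is $\top_1$. We show this cannot occur. Each $(h,r)\in U_W$ lies below some element of $W\subseteq B_{\leq n}$. The only elements of $B$ above $(h,r)$ are $T_{h(r),r}$, because $T$'s are maximal in $B$. Hence $T_{h(r),r}\in W$, which forces $r\leq n$. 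This contradicts $R$ being infinite.
\item It eventually lies in $B$, with supremum some $T_{m,k}$. Then $D$ is eventually contained in $W$: an element of $B$ lies in $\dn W$ exactly when it lies below an element of $W$, and $W$ is lower in $B_{\leq n}$. We must also check that $B_{\leq n}$ is lower in $B$. This holds because the relations $\sqsubset_2$, $\sqsubset_3$, $\sqsubset_4$ only increase or preserve the level index. The directed subset of $W$ has its supremum computed in $B_{\leq n}$, equal to $T_{m,k}$, and this lies in $W$ since $W\in\Gamma B_{\leq n}$. Thus the supremum lies in $U_W$.
\end{enumerate}

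\textbf{Step 2: $W\in\sigma(U_W)$.} The set $W=U_W\cap B$ is upper in $U_W$: elements of $U_W$ above elements of $B$ are in $B$, and they lie in $W$ by lowerness and the fact that their $B$-upper bounds are in $W$. For inaccessibility, let $D\subseteq U_W$ be directed with $\bigvee D\in W$ and suppose $D\cap W=\emptyset$. Then $D\subseteq \mathbb{N}^{\mathbb{N}}\times\mathbb{N}$, so $D$ lies in one chain $\{(h,r)\}$. If $D$ is infinite, its supremum is $\top_1\notin W$, a contradiction. If $D$ is finite, it has a maximum, which lies in $D\cap W=\emptyset$, again a contradiction. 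This is the same argument as statement (ii) in Theorem 4.9.

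\textbf{Step 3: sobriety.} Write $W=U_W\cap W'$, where $U_W\in\Gamma P_1$ and $W'\in\sigma(P_1)$ is chosen with $W'\cap U_W=W$. Such a $W'$ exists by Lemma 4.1 applied on the closed set $U_W$: open sets of a Scott closed subdcpo are traces of Scott open sets of $P_1$. Then Lemma 4.3 with Theorem 4.9 gives that $\Sigma\Gamma W$ is sober. Alternatively, one can use the first paragraph of the proof of Lemma 4.3 to get $\Sigma\Gamma U_W$ sober, and then Claims 1 to 3 of Lemma 4.2 applied to $W\in\sigma(U_W)$.

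\textbf{Main obstacle.} The main difficulty is Step 1: the case analysis of directed sets in $P_1$ and the verification that $B_{\leq n}$ is lower in $B$, so that suprema computed in $B_{\leq n}$ agree with those in $P_1$.
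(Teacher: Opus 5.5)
Your proposal is correct and follows essentially the paper's proof. It uses the same classification of directed subsets of $U_W$: either a single chain $\{(h,r)\}$, which cannot occur because $(h,r)\in U_W$ forces $r\le n$, or a subset of $U_W\cap B=W$ whose supremum $T_{m,k}$ stays in $W$. It then gives the same upper-set and inaccessibility check for $W\in\sigma(U_W)$, and obtains sobriety from Theorem 4.9 plus Lemma 4.3; your single application of Lemma 4.3 to $W=U_W\cap W'$ is equivalent to the paper's two-step application.

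Two small corrections:

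\begin{enumerate}
\item The elements of $B$ above $(h,r)$ are all the $T_{h(k),k}$ with $k\ge r$, since $(h,r)\le(h,k)<T_{h(k),k}$; it is not only $T_{h(r),r}$. The correct version still gives some $k$ with $r\le k\le n$, so your conclusion is unaffected.
\item The trace property $\sigma(U_W)=\{O\cap U_W\mid O\in\sigma(P_1)\}$ for the closed set $U_W$ does not come from Lemma 4.1, which concerns open subsets. It comes from the observation $\Gamma U=\{A\in\Gamma P\mid A\subseteq U\}$ at the start of the proof of Lemma 4.3.
\end{enumerate}
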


\begin{proof}
One can check that $B_{\leq n}\in\Gamma B$. As $W\in\Gamma B_{\leq n}$, we have $W\in\Gamma B$.
Consequently,
$$U_W\cap B=\{x\in B\mid\exists w\in W,\ x\leq_{P_1}w\}
=\{x\in B\mid\exists w\in W,\ x\leq_{B}w\}=W.$$
Moreover,$\top_1\notin U_W$. In addition, by the order on $P_{1}$,
$$
(h,r)\leq T(m,k)\Longleftrightarrow r\leq k,\qquad m=h(k).$$
Thus, we obtain $U_W=W\cup\bigl\{(h,r)N^{\mathbb N}\times\mathbb N\mid
\exists k\in\mathbb N,
r\leq k\leq n,
T(h(k),k)\in W
\}$.
In particular,
$(h,r)\in U_W$ implies $r\leq n$ and for every indexing function $h$,
$$
U_W\cap\{(h,r):r\in\mathbb N\}
\subseteq
\{(h,0),\ldots,(h,n)\}.
$$ So we conclude that
$\dn_{P_1}U_W=U_W$.
Let $D\subseteq U_W$ be a directed set without a greatest.
The classification
of directed subsets contain in $U_{W}$ gives
$$\exists h\in\mathbb N^{\mathbb N},\
D\subseteq\{(h,r):r\in\mathbb N\}
\ \text{or}\
D\subseteq\mathbb B.
$$
The first alternative is impossible since $D$ has a largest element. This yields
$$
D\subseteq U_W\cap\mathbb B=W.
$$
As $W\in\Gamma B$ and $B$ is a subdcpo of $P_{1}$, we have
$$
\bigvee_{P_1}D
=
\bigvee_{\mathbb B}D
\in W
\subseteq U_W.
$$
Therefore, $U_W\in\Gamma P_1$.

Clearly, $W$ is upper in $U_{W}$. Suppose $\widetilde{D}\subseteq U_{W}$ is a directed without a greatest element and $d_*=\bigvee\limits_{U_W}D\in W$. The preceding classification argument yields
$$
D\subseteq U_W\cap\mathbb B=W,
$$
and therefore $D\cap W=D\neq\varnothing$.
Hence $W\in\sigma(U_W)$.

By Theorem 4.9, $\Sigma\Gamma P_1$ is sober. By Lemma 4.3 and $U_{W}\in\Gamma P_{1}$, we obtain
$\Sigma\Gamma U_W$ is sober. Similarly, using Lemma 4.3 again and $W\in\sigma(U_{W})$, $\Sigma\Gamma W$ is sober.\end{proof}

\begin{theorem}
{\rm Let $P_{2}$ be the dcpo constructed in section 3. Then
$\Sigma\Gamma P_2$ is sober.}
\end{theorem}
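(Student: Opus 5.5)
The plan is to mirror the proof of Theorem 4.9 and reduce to Lemma 4.10. Let $\mathscr{F}\subseteq\Gamma P_2$ be an irreducible Scott closed subset of $\Sigma\Gamma P_2$ and put $U=\bigcup\mathscr{F}$. By the proof of Subclaim 4.1 in Lemma 4.2, $U\in\Gamma P_2$. The cases $U=\emptyset$ and $U=P_2$ are handled exactly as in Theorem 4.9: if $\top_2\in U$, some member of $\mathscr{F}$ is all of $P_2$. So it remains to treat $\emptyset\neq U\neq P_2$ and to show $U\in\mathscr{F}$. As in Claim 5 of Theorem 4.9, it suffices to prove that $\Sigma\Gamma U$ is sober, because $\mathscr{F}$ is then an irreducible closed subset of $\Sigma\Gamma U$.

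\textbf{Step 1: reduction to $B$.} Put $V=U\cap B$ and verify the hypotheses of Lemma 4.2. The set $V$ is upper in $U$, since $B\cup\{\top_2\}$ is upper in $P_2$ and $\top_2\notin U$. A directed $D\subseteq U$ disjoint from $V$ lies in a single chain $\{(g,n,k)\mid n\in\mathbb{N}\}$. If $D$ is infinite, its supremum is $\top_2\notin U$, which is impossible. Hence $D$ is finite and has a top element, and we conclude $N(U)\subseteq V$ and $V\in\sigma(U)$. Lemma 4.2 then gives: $\Sigma\Gamma U$ is sober iff $\Sigma\Gamma V$ is sober.

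\textbf{Step 2: boundedness of levels.} This is the main obstacle. We want $n_0$ such that every $T_{m,k}\in U$ satisfies $k\le n_0$, so that $V\in\Gamma B_{\le n_0}$ after the same truncation as Claim 3 of Theorem 4.9. The difficulty is that in $P_2$ the level $k$ is fixed along a chain $(g,n,k)$, and the chains climb through the sets $E_n$. So the diagonal argument has to be changed. The first approach to try is this: suppose that for infinitely many $n$ there are tops $T_{m,k}\in U$ with $m\in E_n$ and a common $k$. Then build $g\in\chi$ with $g(n)=m$ for those $n$ and arbitrary values elsewhere. This gives $(g,n,k)\in U$ for infinitely many $n$, hence $\top_2\in U$, a contradiction. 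Therefore, for each $k$, only finitely many indices $n$ have $U\cap\{T_{m,k}\mid m\in E_n\}\neq\emptyset$. If the levels $k$ themselves are unbounded, this does not immediately bound them. In that case the fallback is to skip truncation by level and instead use the piece decomposition directly. Each $L_k$ decomposes into disjoint copies of $C_0$ and $H_0$ (Facts 1, 2). One then shows that $V$ (or a Scott open part of it, obtained via Lemma 4.2) is a disjoint union of countably many locally closed pieces. These pieces are embedded into $C_0$ through the copies $Q_j$ of Fact 3, exactly as in Subclaim 4.4. Countably many pieces suffice, since $\nu$ only needs $I$ countable.

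\textbf{Step 3: conclusion via Lemma 4.10.} Assume the reduction yields $W\in\Gamma B_{\le n_0}$ (equivalently, with $B$ viewed inside $P_1$). Lemma 4.10 gives that $\Sigma\Gamma W$ is sober. Crucially, the subposet $B$ is the same in $P_1$ and $P_2$, so the sobriety of $\Sigma\Gamma W$ depends only on the order of $W\subseteq B$. Applying Lemma 4.2 backwards through Steps 2 and 1 gives that $\Sigma\Gamma U$ is sober, and hence $U\in\mathscr{F}$ is the greatest element of $\mathscr{F}$. In the fallback route we would instead use Lemma 4.3 with Lemma 4.8 and then the retraction argument of Subclaim 4.5.
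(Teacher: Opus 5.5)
\textbf{There is a genuine gap.} Your Step 1 and the finiteness statement at the start of Step 2 match the paper's Claims 1 and 2. The proof breaks after that point.

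\textbf{The level bound fails in $P_2$.} Step 3 needs a uniform bound on the levels of the tops in $U$, and no such bound exists in general. Fix $m\in E_0$ and let $U=\dn_{P_2}\{T_{m,k}\mid k\in\mathbb{N}\}$. This set is Scott closed and omits $\top_2$: since the $E_j$ are pairwise disjoint, a chain $\{(g,n,k)\mid n\in\mathbb{N}\}$ meets $U$ at most in $(g,0,k)$. Yet $U$ contains a top at every level, and it arises as $\bigcup\mathscr{F}$ for the irreducible set $\mathscr{F}=\dn_{\Gamma P_2}U$. So you cannot reduce to a single $W\in\Gamma B_{\le n_0}$.

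\textbf{The fallback does not work as stated.} It cannot be run ``exactly as in Subclaim 4.4''. The sets $O_k$ and the density statement $\overline{Y}_{\Sigma S}=S$ (Subclaims 4.1--4.3 of Theorem 4.9) rely on there being only finitely many levels. In particular, the density argument picks the highest level $p$ met by a nonempty Scott open set. With unbounded levels an open set may meet infinitely many levels, so no such $p$ need exist. Nothing in your sketch controls the Scott closure of the part of $V$ lying above a given level. In $B$ that closure can spread downward: a chain at level $k$ whose members lie under tops of level $k+1$ (via $\sqsubset_2$ or $\sqsubset_3$) has a top of level $k$ as supremum, and that top may in turn dominate elements of level $k-1$.

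\textbf{The missing idea.} Use the finiteness of $J_N$ to control these closures, and replace truncation by an exhaustion. In the paper, $K_N=\varphi(J_N)$ is finite. Put $T_N=V\cap\bigcup_{k\ge N}L_k$. The only tops of level $N-1$ that arise as suprema of directed sets meeting $\dn_V T_N$ then lie in a finite set $S_N$. Hence $\overline{T_N}_{\Sigma V}\subseteq\dn_V T_N\cup\dn_V S_N\subseteq\bigcup_{k\ge N-2}L_k$.

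The sets $Z_N=V\setminus\overline{T_N}_{\Sigma V}$ then have the following properties:
\begin{enumerate}
\item they increase to $V$;
\item they are Scott open in $U$;
\item each is Scott open in $C_N=V\cap B_{\le N-1}\in\Gamma B_{\le N-1}$.
\end{enumerate}
Lemmas 4.10 and 4.3 therefore make each $\Sigma\Gamma Z_N$ sober.

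The argument then concludes as follows. Transporting $\mathscr{F}$ along $F\mapsto F\cap Z_N$ gives $\overline{Z_N}_{\Sigma U}\in\mathscr{F}$. Taking the directed supremum gives $\overline{V}_{\Sigma U}\in\mathscr{F}$. Every point of $U\setminus\overline{V}_{\Sigma U}$ is compact in $U$, so the $F\cup\dn_U x$ argument from Claim 4 of Lemma 4.2 yields $U\in\mathscr{F}$.
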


\begin{proof}
Let $\mathscr{A}\subseteq\Gamma P_2$
be an irreducible Scott closed in $\Gamma P_2$, and put
$U=\bigcup\mathscr{A}$.
By the proof of Subclaim 4.1 in Lemma 4.2, we have $U\in\Gamma P_2$.

It suffices to consider the situation that $U\neq\emptyset$ and $\top_{2}\not\in U$. It follows form  $U\in\Gamma P_2$ that $$\Gamma U=\{F\in\Gamma P_2\mid F\subseteq U\}=\mathord{\downarrow}_{\Gamma P_2}U.$$ Take
$$
V=U\cap B,
$$ where $B=\mathbb{N}\times\mathbb{N}\times L$ is the subdcpo of $P_{2}$.

$\mathbf{Claim \ 1}$: $V\in \Gamma B, V\in\sigma(U)$ and further $$\{\bigvee_U D^{\ast}\mid
D^{\ast}\subseteq U\ \text{is directed and}\ \bigvee_U D^{\ast}\notin D^{\ast}\}
\subseteq V.$$

In fact, $V$ is lower in $B$ and furthermore for each directed $D^{\star}$ contained in $V$, $\bigvee\limits_{B}D^{\star}\in {\rm max}(B)$ if $\bigvee\limits_{B}D^{\star}\not\in D^{\star}$. The Scott closedness of $V$ is obtained.

Clearly, $V\subseteq U$ is upper. Suppose $\widetilde{D}\subseteq U$ is directed and $\bigvee\limits_{U}\widetilde{D}\in V$. Without loss of generality, we can require that $\bigvee\limits_{U}\widetilde{D}\not\in \widetilde{D}$. It follows form $\top_{2}\not\in U$ that $\bigvee\limits_{U}\widetilde{D}=\bigvee\limits_{B}\widetilde{D}\in{\rm max}(B)$. Assume that $\widetilde{D}\cap V=\emptyset$, in other words, $\widetilde{D}\subseteq\chi\times\mathbb{N}\times\mathbb{N}$. Then we deduce that $\widetilde{D}\subseteq\{(g,n,k)\mid n\in\mathbb{N}\}$ is a infinite chain. Note that the unique upper bound in $P_{2}$ is $\top_{2}$. This yields that $\bigvee\limits_{U}\widetilde{D}$ does not exists, impossible. Consequently, $\widetilde{D}\cap V\neq\emptyset$ and thus $V$ is Scott open in $U$.

Let $\widehat{D}\subseteq U$ be a directed set without a largest element. According to the above process, $\bigvee\limits_{U}\widehat{D}\in{\rm max}(B)$ and hence $\bigvee\limits_{U}\widehat{D}\in V$. The Claim 1 is valid.

For each fixed level $k$, define
$$
J_k=
\{j\in\mathbb N\mid
\text{there exists }m\in E_j\text{ with }T(m,k)\in V\}.
$$

$\mathbf{Claim \ 2}$: $J_{k}$ is finite.

Assume that $J_{k}$ is infinite. Define a mapping $\hat{g}:\mathbb{N}\longrightarrow\bigcup\limits_{n\in\mathbb{N}}E_{n}$ by

\[
\hat{g}(j)=
\begin{cases}
m_{j}, &j\in J_{k}\\
e_{j}, &j\not\in J_{k},
\end{cases}
\]
where $e_{j}$ is a arbitrary element coming form $E_{j}$, for each $j\not\in J_{k}$. As $J_{k}$ is infinite and further unbounded, for every natural
number $n$, we can choose an index $j_{n}\in J_{k}$ satisfying $n<j_{n}$.
Then we obtain
$$
(\hat{g},n,k)<(\hat{g},j_{n},k)<T(m_{j_{n}},k)\in U.
$$
As $U$ is Scott closed in $P_{2}$, we have $\{(\hat{g},n,k)\mid n\in\mathbb N\}\subseteq U$.
However, $\{(\hat{g},n,k)\mid n\in\mathbb N\}$ is a chain and hence $\bigvee\limits^{P_{2}}\limits_{n\in\mathbb N}(\hat{g},n,k)=\top_{2}\in U$, impossible. Hence
Claim 2 is proved.

For an integer $2\leq N$, put $$T_N=V\cap\bigcup_{k\geq N}L_k,$$
and define
$$
K_N=
\{(a,b)\in\Delta:
\text{there exists }m\in i(a,b)\text{ with }T(m,N)\in V\}.
$$
By Claim and $i$ a injection, $K_{N}$ is finite. Define a finite set of local tops
one level below by
$$
S_N=
V\cap(\{T(a,N-1)\mid(a,b)\in K_N\}\cup\{T(b,N-1)\mid(a,b)\in K_N\}),
$$
and put
$$
H_N=
\mathord{\downarrow}_V T_N
\cup
\mathord{\downarrow}_V S_N.$$

$\mathbf{Claim \ 3}$: $H_N\in\Gamma V$ and $\overline{(T_N)}_{\Sigma V}\subseteq H_N
\subseteq V\cap\bigcup\limits_{k\geq N-2}L_k$.

As $K_{N}$ is finite, $\dn_{V} S_{N}$ is Scott closed in $V$. Clearly, $H_{N}$ is lower in $V$.

Let $D\subseteq H_{N}$ be a directed set without a greatest element. Then there are
$m,\ell\in\mathbb N$ and $\delta=(c,d)\in\Delta$ such that one of the following two alternatives holds

(i) $D=\{(m,\ell,p_\delta)\mid p\in A\}$, $A\subseteq\mathbb N$ and $A$ is infinite;

(ii) $D=\{(m,\ell,u_\delta)\mid u\in E\}$, $E\subseteq\mathbb{N}^{<\omega}$ such that for each $u,v\in E$, $u\preceq v\ \text{or}\ v\preceq u$ and further $\sup\{|u|\mid u\in E\}=+\infty$, where $|u|$ is the length of the word $u$.

In both cases, $D\subseteq L_\ell$ and $\bigvee\limits_{B}D=\tau(m,\ell)$.
Since
$D\subseteq V$ and $V\in\Gamma B$, we obtain
$$d_*=\bigvee_VD
=
\bigvee_{B}D
=
T(m,\ell)\in V.$$ In convenience, define a mapping $\lambda:B\longrightarrow \mathbb{N}$ by $\lambda(m,k,z)=k$. There are only the following 3 cases.

$\mathbf{Case \ 1}$: $\ell\geq N$.

Then we know
$$d_*=T(m,\ell)
\in V\cap L_\ell
\subseteq T_N
\subseteq H_N.$$

$\mathbf{Case \ 2}$: $\ell\leq N-2$.

Suppose that there is $d\in D\cap\downarrow_V T_N$.
Then there is a $y\in T_N$ such that $d\leq y$.
By the order on $B$,
$$
N
\leq\lambda(y)
\leq\lambda(d)+1
=\ell+1
\leq N-1,
$$
a contradiction. Thus $D\cap\dn_V T_N=\varnothing$.
Together with $D\subseteq H_N=\dn_V T_N\cup\dn_V S_N$,
this gives $D\subseteq\dn_V S_N$.
By $\dn_V S_N\in\Gamma V$, we get $$d_*=\bigvee_VD\in\downarrow_V S_N\subseteq H_N.$$

$\mathbf{Case \ 3}$: $\ell=N-1$.

If $D\cap\downarrow_V T_N=\varnothing$, then as in Case 2,$D\subseteq\dn_V S_N$ and hence $d_*\in\dn_V S_N\subseteq H_N$.

It remains to consider $D\cap\dn_V T_N\neq\varnothing$.
Choose $d_0\in D, y\in T_N$ such that $d_0\leq y$.
By the order on $B$,
$$
N
\leq\lambda(y)
\leq\lambda(d_0)+1
=(N-1)+1
=N.
$$
This yields $\lambda(y)=N$. So by the order on $B$, we just consider the following two subcases.

$\mathbf{Case \ 3.1}$: $d_0=(m,N-1,u_\delta)$ for some $u\in\mathbb{N}^{<\omega}$.

By the order $\sqsubset_{2}$ on $B$,
$$m=c\ \text{and}\ \exists\ w\in\mathbb{N}^{<\omega}, u\preceq w, y=T(f_{c,d}(w),N).$$

It follows from $y\in T_N\subseteq V$ and $f_{c,d}(w)\in i(c,d)$ that $(c,d)\in K_N$. Whence,
$$d_*=T(m,N-1)=T(c,N-1)\in V.$$

Thus, we obtain
$$d_*=T(c,N-1)\in V\cap\{T(a,N-1)\mid(a,b)\in K_N\}\subseteq S_N\subseteq H_N.$$

$\mathbf{Case \ 3.2}$: $d_0=(m,N-1,p_\delta)$ for some $p\in\mathbb{N}$.

By the order $\sqsubset_{3}$ on $B$, $m=d$ and further $$\exists r\in\mathbb N,p\leq r,y=\tau(f_{c,d}((r)),N).$$

As $y\in T_N\subseteq V$ and $f_{c,d}((r))\in i(c,d)$, we know $(c,d)\in K_N$. This leads to $d_*=T(m,N-1)=T(d,N-1)\in V$. Similarly, it yields
$$d_*=\tau(d,N-1)\in V\cap\{\tau(b,N-1):(a,b)\in K_N\}\subseteq S_N\subseteq H_N.$$

Thus all cases yield $d_*=\bigvee\limits_VD\in H_N$.
Hence, $H_N\in\Gamma V$ and surely $\overline{(T_N)}_{\Sigma V}\subseteq H_N$.

Let $x\in\dn_V T_N$. Then there exists a $y\in T_N$ such that $x\leq y$. By the order on $B$,
$$\lambda(x)\geq\lambda(y)-1\geq N-1.$$
Consequently,
$$\dn_V T_N
\subseteq
V\cap\bigcup_{k\geq N-1}L_k.$$

Next, let $x\in\dn_V S_N$. We can fix a $s\in S_N$ satisfying $x\leq s$. Based on $S_N\subseteq V\cap L_{N-1}$,
we can conclude that $\lambda(s)=N-1$.
Using the order on $B$ again,
$$
N-2
=\lambda(s)-1
\leq\lambda(x)
\leq\lambda(s)
=N-1.
$$
Therefore, $\dn_V S_N\subseteq V\cap(L_{N-2}\cup L_{N-1})$. All in all,
$$H_N=\dn_V T_N\cup\dn_V S_N\subseteq V\cap\bigcup_{k\geq N-2}L_k.$$ The Claim 3 is proved.

For each $2\leq N$, define $Z_N=V\setminus\overline{(T_N)}_{\Sigma V}$.

$\mathbf{Claim \ 4}$: $Z_N\in\sigma(U)$ and $\bigcup\limits_{N\geq2}Z_N=V$.

Clearly, each $Z_N$ is Scott open in $V$ and $Z_N\subseteq Z_{N+1}$. It follows form $V\in\sigma(U)$ and $Z_N\in\sigma(V)$ that $Z_N\in\sigma(U)$.
In addition, $$Z_N\subseteq V\setminus T_N\subseteq V\cap(\bigcup\limits_{r\leq N-1}L_{r}).$$ Obviously, $\bigcup\limits_{N\geq2}Z_N\subseteq V$. Suppose $v\in V$. As the different parts $L_{k}$ are disjoint, we can fix a unique $k^{\ast}$ such that $v\in V\cap L_{k^{\ast}}$. Choose a $N^{\ast}>k^{\ast}+3$. Then $v\not\in\bigcup\limits_{l\geq N^{\ast}-2}L_{l}$ and by Claim 3, $x\not\in \overline{(T_{N})}_{\Sigma V}$. Equivalently, $x\in Z_{N^{\ast}}$. Hence, $V=\bigcup\limits_{N\geq2}Z_N$.

Fro each $m$, set $B_{\leq m}=\bigcup\limits_{l\leq m}L_{l}$.

$\mathbf{Claim \ 5}$: For each $2\leq N$, $\Sigma\Gamma Z_N$ is sober.

Put $$C_N=V\cap B_{\leq N-1}.$$ As $V\in\Gamma B$ and $B_{\leq N-1}\Gamma B$, $C_N\in\Gamma\mathbb B$. Further, we deduce that $C_N\in\Gamma\mathbb B_{\leq N-1}$. By Lemma 4.10, $\Sigma\Gamma C_N$ is sober. It is not difficult to check that $C_N$ is a subdcpo of $V$ and $Z_N$ is upper in $C_N$. So we can conclude that $Z_N\in\sigma(C_N)$. Using Lemma 4.3, $\Sigma\Gamma Z_N$ is sober.

$\mathbf{Claim \ 6}$: $F_{0}=\overline{V}_{\Sigma U}\in\mathscr A$.

Define two mappings $q_{N}$ and $e_{N}$ by
$$
q_N:\Gamma U\longrightarrow\Gamma Z_N,
\ \ q_N(F)=F\cap Z_N,
$$
$$
e_N:\Gamma Z_N\longrightarrow\Gamma U,
\ \ e_N(G)=\overline{G}_{\Sigma U}.
$$
As $Z_{N}$ is Scott open in $U$, by the proof the Claim 1 in Lemma 4.2, $q_{N}$ and $e_{N}$ are Scott continuous and satisfy
$$
e_N\circ q_N(F)\subseteq F,\  q_N\circ e_N=id.
$$
Since $\mathscr A$ is an irreducible closed set in $\Sigma \Gamma P_{2}$ and $U\in\Gamma P_{2}$, $\mathscr A$ is also an irreducible closed set in $\Sigma \Gamma U$. This yields that
$$\mathscr E_N=\overline{q_N(\mathscr A)}_{\Sigma\Gamma Z_{N}}$$ is irreducible in $\Sigma\Gamma Z_{N}$.
The sobriety of $\Sigma\Gamma Z_{N}$ implies that $\mathscr E_N$ contains a largest element $G_N\in\mathscr E_N$. So we have $q_N(\mathscr A)\subseteq G_N$ and therefore
$$
\bigcup q_N(\mathscr A)=\bigcup_{F\in\mathscr A}(F\cap Z_N)
=U\cap Z_N=Z_N.
$$
Consequently, $G_N=Z_N\in\mathscr E_N$.
For each $F\in\mathscr A$, $$e_N\circ q_N(F)\subseteq F\in\mathscr A$$ and using $\mathscr A\in\Gamma(\Gamma U)$, $e_N\circ q_N(F)\in\mathscr A$. Equivalently,
$q_N(\mathcal A)\subseteq e_N^{-1}(\mathscr A)$ and meanwhile
$\mathscr E_N\subseteq e_N^{-1}(\mathscr A)$.
In particular,
$F_N=\overline{(Z_N)}_{\Sigma U}=e_N(Z_N)\in\mathscr A.$

Then all $F_{N}$ forms an increasing subfamily of $\mathscr A$. By Claim 4 and $\mathscr A\in\Gamma(\Gamma U)$,
$\bigvee\limits_{2\leq N}^{\Gamma U}F_N\in\mathscr A$ and further $$\bigvee\limits_{2\leq N}^{\Gamma U}\subseteq \overline{V}_{\Sigma U}=F_{0}\in\mathscr A.$$

$\mathbf{Claim \ 7}$: Each $x\in U\setminus F_{0}$ is compact in $U$.

Let $\hat{D}$ be a directed set of $U$ with $x\leq\bigvee\limits_U \hat{D}$. Assume that $x\not\leq \hat{d}$, for each $\hat{d}\in\hat{D}$. Then $\bigvee\limits_U \hat{D}\notin \hat{D}$ and hence $\bigvee\limits_U \hat{D}\in V\subseteq F_{0}$. Together with $ F_{0}=\overline{V}_{\Sigma U}\in\Gamma U$, $x\in F_{0}$, impossible. Therefore, $x$ is compact in $U$.

Fix such a compact point $x\in U\setminus F_{0}$. Set
$$
\mathscr H_x=\{F\in\Gamma U\mid x\in F\}
.$$
Then $\mathscr H_x$ is Scott open in $\Gamma U$. As $\bigcup\mathscr{A}=U$ and $\mathscr{A}$ is irreducible in $\Sigma\Gamma U$, $\mathscr A\cap\mathcal H_x$ is nonempty and moreover $$\overline{(\mathscr A\cap\mathcal H_x)}_{\Sigma\Gamma U}=\mathscr A.$$

$\mathbf{Claim \ 8}$: $\Sigma\Gamma P_{2}$ is sober.

Define a mapping $u_{x}$ by
$$
u_x:\Gamma U\longrightarrow\Gamma U,
\ \
u_x(F)=F\cup\mathord{\dn}_U x.
$$

Let $\{F_{j}\mid j\in J\}$ be a directed subfamily of $\Gamma U$. Then we have $$u_x(\bigvee\limits_{j\in J}\limits^{\Gamma U}F_{j})=u_x(\overline{\bigcup\{F_{j}\mid j\in J\}}_{\Sigma \Gamma U})=\mathord{\dn}_U x\cup\overline{\bigcup\{F_{j}\mid j\in J\}}_{\Sigma \Gamma U}$$ and $$\bigvee\limits_{j\in J}\limits^{\Gamma U}u_x(F_{j})=\bigvee\limits_{j\in J}\limits^{\Gamma U}(F_{j}\cup\mathord{\dn}_U x)=\mathord{\dn}_U x\cup\overline{\bigcup\{F_{j}\mid j\in J\}}_{\Sigma \Gamma U}.$$ So $u_{x}$ is Scott continuous. Clearly, $\mathscr A\cap\mathcal H_x\subseteq u_{x}^{-1}(\mathscr A)\in\Gamma(\Gamma U)$. This leads to $\mathscr A=\overline{(\mathscr A\cap\mathcal H_x)}_{\Sigma\Gamma U}\subseteq u_{x}^{-1}(\mathscr A)$, equivalently, $u_{x}(\mathscr A)\subseteq\mathscr A$.

For every finite subset $K\subseteq U\setminus F_{0}$, define
$$
F_K=F_0\cup\bigcup_{k\in K}\mathord{\downarrow}_U k.
$$
By $u_{x}(\mathscr{A})\subseteq\mathscr{A}$, for each $x\in U\setminus F_{0}$ and mathematical induction, $F_K\in\mathscr A$. Take $$\mathscr{B}=\{F_K\mid K\subseteq U\setminus F_{0}\ \text{is finite}\}.$$
Then $\mathscr{B}$ is directed and contained in $\mathscr{A}$. The Scott closedness of $\mathscr{A}$ shows that $$U=\bigvee\limits_{\Gamma U}\mathscr{B}\in\mathcal A.$$
Therefore, $\Sigma\Gamma P_2$ is sober.
\end{proof}

\begin{lemma} {\rm (see \cite{EEF31})Let $P_{1}$ and $P_{2}$ be the dcpos presented in Section 3. Then $$\Phi=\dn_{(P_{1}\times P_{2})}\{(a,a)\mid a\in {\rm max}(B)\}$$ is an irreducible closed set in $\Sigma (P_{1}\times P_{2})$.}

\end{lemma}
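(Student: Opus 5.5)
Since this lemma is quoted from \cite{EEF31}, the plan is to reproduce its two ingredients directly from the orders on $P_1$ and $P_2$. Write $\mathcal D=\{(a,a)\mid a\in{\rm Max}B\}$, where $a=T_{m,n}$ is read in the copy of $B$ inside $P_1$ and inside $P_2$. Then $\Phi=\dn\mathcal D$. I will show (1) $\Phi\in\Gamma(P_1\times P_2)$, and (2) $\mathcal D$ is irreducible. The closure of an irreducible set is irreducible, and (1) gives $\overline{\mathcal D}=\Phi$, so (1) and (2) together give the lemma.

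For (1), $\Phi$ is lower by definition. Let $D\subseteq\Phi$ be directed with supremum $(u,v)$; I must show $(u,v)\le(a,a)$ for some $a\in{\rm Max}B$. First I rule out $u=\top_1$ and $v=\top_2$. In $P_1$, the only directed sets whose supremum is $\top_1$ are, up to cofinality, the chains $\{(h,r)\mid r\in\mathbb N\}$. Each $(h,r)$ lies below $T_{h(j),j}$ only for $j\ge r$. Hence if $\pi_1D$ is cofinal in such a chain, the witnesses $a_d=T_{h(j_d),j_d}$ have unbounded level $j_d$. The second coordinates $y_d\le a_d$ lie in $P_2$ below tops of unbounded level, and $\pi_2 D$ must still be directed. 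In $P_2$, elements of $B$ are below tops of at most two adjacent levels. So a directed $\pi_2D$ whose elements must sit under tops of unbounded level is forced into a chain $\{(g,n,k)\}$, which has fixed $k$ and therefore sits under tops of the single level $k$. This contradicts the unbounded levels. The symmetric argument rules out $v=\top_2$: such a chain has fixed level $k$ but its tops $T_{g(n),k}$ run through the different $E_n$, so the first coordinates would have to sit below $T_{g(n),k}$ in $P_1$. That forces $\pi_1 D$ into the finite part $\{(h,0),\dots,(h,k)\}$, and I will get a contradiction from finiteness plus directedness. Once both coordinates are below their tops, a directed set of $B$ without greatest element has supremum a top, as noted in Lemma 4.7 and Claim 1 of Theorem 4.11. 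The remaining check is that both coordinates land at the \emph{same} top, and I will do this by a case analysis on the relations $\sqsubset_1,\dots,\sqsubset_4$.

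For (2), I will show that any two Scott open $\mathcal U,\mathcal V\subseteq P_1\times P_2$ meeting $\mathcal D$ have $\mathcal U\cap\mathcal V\cap\mathcal D\ne\emptyset$. The mechanism is a ``propagation'' lemma. Suppose $(T_{m,n},T_{m,n})\in\mathcal U$. Since $T_{m,n}$ is the supremum in $B$ of the chains $\{x_{a,b}\}$ and $\{k_{a,b}\}$ in the $\sqsubset_1$-part, Scott openness of $\mathcal U$ puts some pair $(x,x)$ with $x=(m,n,z)$ into $\mathcal U$. Then everything above it lies in $\mathcal U$, in particular $(T',T')$ for every top $T'$ reachable from $z$ via $\sqsubset_2,\sqsubset_3,\sqsubset_4$. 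This means $\mathcal U\cap\mathcal D$ is large. If $m=a$ with $(a,b)\in\Delta$, it contains $(T_{f_{a,b}(w),n+1},T_{f_{a,b}(w),n+1})$ for all long words $w$. It also contains, via $\sqsubset_4$, the tops $T_{f_{a,b}(s\cdot k),n}$ for all large $k$. The injection $i$ with disjoint images $i(a,b)$ and all elements $>b$ lets me choose, for two given tops $T_{m,n}\in\mathcal U$ and $T_{m',n'}\in\mathcal V$, a pair $(a,b)=(m,m')$ or $(m',m)$. Iterating $\sqsubset_2$, $\sqsubset_3$ and $\sqsubset_4$ then produces a common top $T_{f_{a,b}(\cdot),\cdot}$ lying in both $\mathcal U$ and $\mathcal V$ after equalizing levels.

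The main obstacle is step (2). Here one must use simultaneous Scott openness in the product $\Sigma(P_1\times P_2)$, which is not the product topology, and this is where the uncountable families $\mathbb N^{\mathbb N}\times\mathbb N$ and $\chi\times\mathbb N\times\mathbb N$ matter. They are exactly what makes the coordinatewise ``cofinite in $k$'' conclusions hold uniformly. First I will try to get uniformity by a diagonal argument. Suppose $\mathcal U$ failed to contain $(T_{f(r),r},T_{f(r),r})$ for infinitely many levels along some choice function $h$. Then the chain $((h,r),(h,r))$ would yield a directed subset of $\Phi$ violating openness. The analogous statement with $g\in\chi$ handles the $P_2$-side. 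Combining the two gives a common top in $\mathcal U\cap\mathcal V\cap\mathcal D$.
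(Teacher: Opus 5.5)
Your step (1) is a workable plan, though the case $v=\top_2$ is under-argued. $\pi_1D$ need not lie in $\{(h,0),\dots,(h,k)\}$; it may eventually lie in $B$. What rules this out is that all tops above a tail of $\pi_1D$ then have first coordinates in a fixed set $\{m\}\cup i(c,e)$. That set meets at most two of the pairwise disjoint $E_j$, whereas $g(j_d)\in E_{j_d}$ with $j_d$ unbounded.

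The genuine gap is in step (2). Scott openness does \emph{not} give that $\mathcal U$ contains $(T_{f_{a,b}(w),n+1},T_{f_{a,b}(w),n+1})$ for all long words $w$. Applied to the chain $((a,n,w^i_{a,b}),(a,n,w^i_{a,b}))_i$ along one infinite branch $w^1\prec w^2\prec\cdots$, it yields only the extensions of a single word. Ranging over all branches, it yields only that $\mathcal W=\{w\in\mathbb N^{<\omega}\mid ((a,n,w_{a,b}),(a,n,w_{a,b}))\in\mathcal U\}$ is extension-closed and meets every infinite branch (a bar). In the infinitely branching tree $\mathbb N^{<\omega}$, a bar need not contain all words of any fixed length.

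The ``uniformity'' argument you propose cannot repair this. The element $(h,r)$ lies in $P_1$ only, so $((h,r),(h,r))\notin P_1\times P_2$. Moreover, by your own step (1), no directed subset of $\Phi$ has first coordinates cofinal in a chain $\{(h,r)\mid r\in\mathbb N\}$. The families $\mathbb N^{\mathbb N}\times\mathbb N$ and $\chi\times\mathbb N\times\mathbb N$ play no role in irreducibility; they are what must be excluded in (1), and what makes each factor sober.

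What is missing is a bar-versus-branch argument, most cleanly organised as follows. The diagonal $\delta\colon B\to P_1\times P_2$, $x\mapsto(x,x)$, is Scott continuous, since a directed subset of $B$ has the same supremum in $B$, $P_1$ and $P_2$. Hence $U'=\delta^{-1}(\mathcal U)$ and $V'=\delta^{-1}(\mathcal V)$ are Scott open in $B$, and it suffices that ${\rm Max}B$ be irreducible in $\Sigma B$.

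Take $T_{m,n}\in U'$ and $T_{m',n'}\in V'$. Raise the lower of the two levels one step at a time via $\sqsubset_2$, pairing the current first coordinate with any larger number, until $n=n'$. If $m\neq m'$, say $a=m<m'=b$ (the other case is symmetric).
\begin{itemize}
\item Then $T_{f_{a,b}(w),n+1}\in U'$ for every $w$ in a bar $\mathcal W$ as above.
\item On the other side, $T_{b,n}$ is the supremum of $(b,n,k_{a,b})_k$, so $\sqsubset_1\mathbin{;}\sqsubset_3$ gives $T_{f_{a,b}(k),n+1}\in V'$ for all $k\geq k_0$.
\item Whenever $T_{f_{a,b}(s),n+1}\in V'$, it is the supremum of $(f_{a,b}(s),n+1,j_{a,b})_j$, so $\sqsubset_1\mathbin{;}\sqsubset_4$ gives $T_{f_{a,b}(s\cdot j),n+1}\in V'$ for all $j\geq j_0(s)$.
\end{itemize}
Thus $V'$ contains the tops indexed by a subtree of $\mathbb N^{<\omega}$ in which every node has cofinitely many children. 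Any infinite branch of this subtree meets $\mathcal W$, and the corresponding top lies in $U'\cap V'\cap{\rm Max}B$.
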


\begin{theorem} {\rm Let $P_{1}$ and $P_{2}$ be the dcpos presented in Section 3. Then $\Sigma(\Gamma P_{1}\times\Gamma P_{2})$ is not sober.}
\end{theorem}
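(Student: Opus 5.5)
The plan is to push the irreducible closed set $\Phi$ of Lemma 4.12 forward along the map
$$\eta:P_1\times P_2\longrightarrow\Gamma P_1\times\Gamma P_2,\qquad \eta(x,y)=(\dn_{P_1}x,\dn_{P_2}y),$$
and to show that $\mathscr{K}=\overline{\eta(\Phi)}_{\Sigma(\Gamma P_1\times\Gamma P_2)}$ is an irreducible closed set without a largest element.

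First, $\eta$ is Scott continuous from $\Sigma(P_1\times P_2)$ to $\Sigma(\Gamma P_1\times\Gamma P_2)$. Directed sups in both products are computed coordinatewise. For a directed $D\subseteq P_i$ we have $\dn\bigvee D=\overline{\bigcup_{d\in D}\dn d}$, since $\dn\bigvee D$ is the closure of $D$. Since continuous images of irreducible sets are irreducible, and closures of irreducible sets are irreducible, $\mathscr{K}$ is an irreducible Scott closed subset.

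Next, suppose $\mathscr{K}$ had a largest element $(G_1,G_2)$; we derive a contradiction.

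\emph{Lower bounds on $G_1,G_2$.} For every $a\in{\rm max}(B)$ we have $(\dn a,\dn a)\in\mathscr{K}$, so $a\in G_1\cap G_2$. Because $G_1,G_2$ are Scott closed, this forces $\overline{{\rm max}(B)}\subseteq G_1$, and likewise in $P_2$. In $P_1$, for any $h$, the chain $(h,n)<T_{h(n),n}$ shows all $(h,n)\in\dn{\rm max}(B)$. The chain $\{(h,n)\}_n$ has supremum $\top_1$, so $\top_1\in G_1$ and $G_1=P_1$. The same argument with $(g,n,k)<T_{g(n),k}$ gives $G_2=P_2$. Hence $(P_1,P_2)\in\mathscr{K}$.

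\emph{Separating $(P_1,P_2)$ from $\mathscr{K}$.} It remains to find a Scott open $\mathscr{U}$ of $\Gamma P_1\times\Gamma P_2$ containing $(P_1,P_2)$ but disjoint from $\eta(\Phi)$. Take
$$\mathscr{U}=\{(F_1,F_2)\mid \top_1\in F_1,\ \top_2\in F_2\}.$$
This set is Scott open because $\top_i$ is compact in $P_i$: any directed family in $P_i$ with supremum $\top_i$ is eventually $\top_i$, as all other elements lie below maximal points or in chains whose sup is $\top_i$. So I would instead use $\mathscr{U}_i=\{F\mid F\not\subseteq P_i\setminus\{\top_i\}\}$, after first checking that $P_i\setminus\{\top_i\}$ is Scott closed. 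Points of $\Phi$ satisfy $x\le a$, $y\le a$ with $a\in{\rm max}(B)$, so $\eta(\Phi)\cap(\mathscr{U}_1\times\mathscr{U}_2)=\emptyset$, a contradiction.

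\textbf{Main obstacle.} The step I expect to be hardest is exactly the last check: $P_i\setminus\{\top_i\}$ is \emph{not} Scott closed, since the chains $(h,n)$ converge to $\top_1$. So $\top_i$ is not compact, and the simple separation above fails. This is what Lemma 4.12's proof in \cite{EEF31} must handle.

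The remedy I would try first is to pull back along $\eta$. If $\mathscr{K}=\dn(G_1,G_2)$, then $\eta^{-1}(\mathscr{K})=\dn_{P_1}G_1\times\dn_{P_2}G_2$ would be a Scott closed set containing $\Phi$. Recall that the point of $\Phi$ is that it has no largest element, because $\Sigma(P_1\times P_2)$ is not sober at $\Phi$. I would therefore aim to show directly that $\overline{\eta(\Phi)}\cap\eta(P_1\times P_2)=\eta(\Phi)$. Since $\eta$ is an order embedding and $(P_1,P_2)=\eta(\top_1,\top_2)$, this would force $(\top_1,\top_2)\in\Phi$, which is false.

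Establishing this equality needs that $\eta^{-1}$ of a Scott closed set of $\Gamma P_1\times\Gamma P_2$ recovers the Scott closure from \cite{EEF31}. Concretely, I would transport the Scott open set of $P_1\times P_2$ witnessing $(\top_1,\top_2)\notin\Phi$ to the open set
$$\{(F_1,F_2)\mid (F_1\times F_2)\cap O\neq\emptyset\}$$
of $\Gamma P_1\times\Gamma P_2$, and verify its Scott openness using the combinatorics of $i$, $f_{a,b}$ and $E_n$.
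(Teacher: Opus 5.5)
\textbf{The gap is the one step that carries the proof.} Your final remedy is the right one, and it is the paper's argument in open-set form. Take $O=(P_1\times P_2)\setminus\Phi$. The complement of your set $\{(F_1,F_2)\mid (F_1\times F_2)\cap O\neq\emptyset\}$ is then exactly $\mathscr{H}_\Phi=\{(F_1,F_2)\in\Gamma P_1\times\Gamma P_2\mid F_1\times F_2\subseteq\Phi\}$. The paper's proof consists of showing that $\mathscr{H}_\Phi$ is Scott closed and contains $\eta(\Phi)$. You never establish this Scott openness (equivalently, this Scott closedness). Once Lemma 4.12 and the continuity of $\eta$ are granted, it is the only nontrivial step, so as written the proof stops at its decisive point.

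\textbf{No combinatorics is needed.} You expect the verification to use the combinatorics of $i$, $f_{a,b}$ and $E_n$, but it is a general fact about any Scott closed $C\subseteq P_1\times P_2$. Let $\{(U_i,V_i)\}$ be directed with each $U_i\times V_i\subseteq C$, and put $S=\bigcup U_i$, $T=\bigcup V_i$. Directedness gives $S\times T\subseteq C$.
\begin{itemize}
\item For each $y\in T$, the map $x\mapsto(x,y)$ is Scott continuous, so $\{x\mid (x,y)\in C\}$ is Scott closed in $P_1$. It contains $S$, hence $\overline{S}\times T\subseteq C$.
\item Symmetrically, using $y\mapsto(x,y)$ for each $x\in\overline{S}$, you get $\overline{S}\times\overline{T}\subseteq C$.
\end{itemize}
So the directed supremum $(\overline{S},\overline{T})$ stays in $\mathscr{H}_C$. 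Since $\Phi$ is a lower set, $\dn x\times\dn y\subseteq\Phi$ for every $(x,y)\in\Phi$. Hence $\eta(\Phi)\subseteq\mathscr{H}_\Phi$, and therefore $\mathscr{K}\subseteq\mathscr{H}_\Phi$.

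\textbf{Finishing and cleanup.} With $\mathscr{K}\subseteq\mathscr{H}_\Phi$, your ending works: $(P_1,P_2)\in\mathscr{K}$ would give $P_1\times P_2\subseteq\Phi$, which is false. Your derivation of $G_i=P_i$ via the chains converging to $\top_i$ is correct but unnecessary. The paper finishes more directly: a largest element $(G_1,G_2)$ would satisfy $G_1\times G_2\subseteq\Phi$ while containing distinct $u\in G_1$ and $v\in G_2$ from ${\rm max}(B)$. Then $(u,v)\le(a,a)$ with $a\in{\rm max}(B)$ forces $u=a=v$, a contradiction. Finally, delete the paragraph introducing $\mathscr{U}$. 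It asserts that $\top_i$ is compact, which is false, as you note yourself afterwards.
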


\begin{proof} Let $\eta:P_{1}\times P_{2}\longrightarrow\Gamma P_{1}\times\Gamma P_{2}$ be the mapping defined by $$\eta(x,y)=(\dn_{P_{1}}x,\dn_{P_{2}}y).$$

$\mathbf{Claim} \ 1$: $\eta$ is Scott continuous.

Fix a $x_{0}\in P_{1}$ and choose a directed set $D\subseteq P_{2}$, we have $$\eta(x_{0},\bigvee\limits_{P_{2}}D)=(\dn_{P_{1}}x_{0},\dn_{P_{2}}\bigvee\limits_{P_{2}}D)$$ and
$$\bigvee\limits_{d\in D}\limits^{\Gamma P_{2}}\eta(x_{0},d)=(\dn_{P_{1}}x_{0},\bigvee\limits_{d\in D}\limits^{\Gamma P_{2}}\dn_{P_{2}} d)=(\dn_{P_{1}}x_{0},\dn_{P_{2}}\bigvee\limits_{P_{2}}D).$$ So the mapping $\eta(x_{0},\_)$ is Scott continuous. Similarly, for each $y_{0}\in P_{2}$, $\eta(\_,y_{0})$ is also Scott continuous. Therefore, $\eta$ is Scott continuous.

Set $$\mathscr{C}=\overline{\eta(\Phi)}_{\Sigma(\Gamma P_{1}\times\Gamma P_{2})}$$ and $$\mathscr{H}_{\Phi}=\{(U,V)\mid (U,V)\in\Gamma P_{1}\times\Gamma P_{2}, U\times V\subseteq\Phi\}.$$

$\mathbf{Claim} \ 2$: $\mathscr{C}\subseteq\mathscr{H}_{\Phi}$ and $\mathscr{H}_{\Phi}\in\Gamma(\Gamma P_{1}\times\Gamma P_{2})$.

Clearly, $\mathscr{H}_{\Phi}$ is lower in $\Gamma P_{1}\times\Gamma P_{2}$. Suppose $\{(U_{i},V_{i})\mid i\in I\}$ is a directed set contained in $\mathscr{H}_{\Phi}$. Set

$$S=\bigcup \{U_{i}\mid i\in I\}\ \text{and}\ \ T=\bigcup \{V_{i}\mid i\in I\}.$$

Then we have $S\times T\subseteq \Phi$ because $\{(U_{i},V_{i})\mid i\in I\}$ is directed. For each $a\in P_{1},b\in P_{2}$, the mapping $\alpha_{a}$ and $\beta_{b}$ are Scott continuous, where $\alpha_{a}:P_{2}\longrightarrow P_{1}\times P_{2}$ and $\beta_{b}:P_{1}\longrightarrow P_{1}\times P_{2}$ are given by $$\alpha_{a}(p)=(a,p)\ \text{and}\ \ \beta_{b}(q)=(q,b).$$

Choose a $y\in T$. $\beta_{y}^{-1}(\Phi)$ is Scott closed subset in $\Gamma P_{1}$. It follows form $S\times T\subseteq \Phi$ that $S\subseteq\beta_{y}^{-1}(\Phi)$. Consequently, $\overline{S}_{\Sigma P_{1}}\subseteq\beta_{y}^{-1}(\Phi)$, or equivalently $\overline{S}_{\Sigma P_{1}}\times\{y\}\subseteq\Phi$. The arbitrariness of $y$ indicates that $\overline{S}_{\Sigma P_{1}}\times T\subseteq\Phi$.

Select a $x\in\overline{S}_{\Sigma P_{1}}$. We know that $\alpha_{x}^{-1}(\Phi)\in\Gamma P_{2}$ and $T\subseteq\alpha_{x}^{-1}(\Phi)$. Whence $\overline{T}_{\Sigma P_{2}}\subseteq\alpha_{x}^{-1}(\Phi)$. In other words, $\{x\}\times\overline{T}_{\Sigma P_{2}}\subseteq\Phi$. So we have $$\overline{S}_{\Sigma P_{1}}\times \overline{T}_{\Sigma P_{2}}=\bigcup\limits_{x\in\overline{S}_{\Sigma P_{1}}}(\{x\}\times\overline{T}_{\Sigma P_{2}})\subseteq\Phi.$$ Therefore $$\bigvee\limits_{i\in I}\limits^{\Gamma P_{1}\times \Gamma P_{2}}(U_{i},V_{i})=(\overline{S}_{\Sigma P_{1}}, \overline{T}_{\Sigma P_{2}})\in\mathscr{H}_{\Phi}$$ and hence $\mathscr{H}_{\Phi}$ is Scott closed in $\Gamma P_{1}\times \Gamma P_{2}$.

Fix $(c,d)\in\Phi$, $\eta(c,d)=(\dn_{P_{1}}c,\dn_{P_{2}}d)$. As $\Phi$ is lower and $(c,d)\in\Phi$, we obtain that $$\dn_{P_{1}}\times\dn_{P_{2}}d=\dn_{P_{1}\times P_{2}}\subseteq\Phi.$$ As a consequence, $\eta(c,d)=(\dn_{P_{1}}c,\dn_{P_{2}}d)\in\mathscr{H}_{\Phi}$. This leads to $\eta(\Phi)\subseteq\mathscr{H}_{\Phi}$. Hence we have $$\mathscr{C}=\overline{\eta(\Phi)}_{\Sigma(\Gamma P_{1}\times\Gamma P_{2})}\subseteq\mathscr{H}_{\Phi}.$$

$\mathbf{Claim}\ 3$: $\Sigma(\Gamma P_{1}\times\Gamma P_{2})$ is not sober.

By Claim 1 and Lemma 4.12, $\mathscr{C}$ is an irreducible closed set in $\Sigma(\Gamma P_{1}\times\Gamma P_{2})$. Assume that $(F,G)$ is the greatest element of $\mathscr{C}$.
By the order on $B$, ${\rm max}(B)$ is infinite. Choose two different maximal elements $u,v\in{\rm max}(B)$. Then $$\eta(u,u)=(\dn_{P_{1}}u,\dn_{P_{2}}u)\leq (F,G)\ \text{and}\ \eta(v,v)=(\dn_{P_{1}}v,\dn_{P_{2}}v)\leq(F,G).$$ This implies that $u\in F$ and $v\in G$. By Claim 2, $F\times G\subseteq\Phi$. So we have $(u,v)\in\Phi$. However, there is no $w\in{\rm max}(B)$ such that $u,v\in\dn_{(P_{1}\times P_{2})}w$, a contradiction. This yields that $\mathscr{C}$ does not contain a largest element. Therefore $\Sigma(\Gamma P_{1}\times\Gamma P_{2})$ is not sober.\end{proof}

\end{document}